\newtheorem{theorem}{Theorem}
\newtheorem{remark}[theorem]{Remark}
\newtheorem{lemma}[theorem]{Lemma}
\newtheorem{proposition}[theorem]{Proposition}
\newtheorem{corollary}[theorem]{Corollary}
\newtheorem{definition}[theorem]{Definition}
\numberwithin{equation}{section}
\numberwithin{theorem}{section}
\DeclarePairedDelimiterX\MeijerM[3]{\lparen}{\rparen}
{\begin{smallmatrix}#1 \\ #2\end{smallmatrix}\delimsize\vert\,#3}
\newcommand\MeijerG[8][]{%
  G^{\,#2,#3}_{#4,#5}\MeijerM[#1]{#6}{#7}{#8}}
\newcommand\MeijerG*[7]{%
  G^{\,#1,#2}_{#3,#4}\MeijerM*{#5}{#6}{#7}}
\title{Fluctuations and correlations for products of real asymmetric random matrices}
\author{Will FitzGerald and Nick Simm}
\address{Department of Mathematics, University of Sussex, Brighton, BN1 9RH, United Kingdom}
\email{w.fitzgerald@sussex.ac.uk, n.j.simm@sussex.ac.uk}
\begin{document}

\maketitle
\begin{abstract}
We study the real eigenvalue statistics of products of independent real Ginibre random matrices. These are matrices all of whose entries are real i.i.d. standard Gaussian random variables. For such product ensembles, we demonstrate the asymptotic normality of suitably normalised linear statistics of the real eigenvalues and compute the limiting variance explicitly in both global and mesoscopic regimes. A key part of our proof establishes uniform decorrelation estimates for the related Pfaffian point process, thereby allowing us to exploit weak dependence of the real eigenvalues to give simple and quick proofs of the central limit theorems under quite general conditions. We also establish the universality of these point processes. We compute the asymptotic limit of all correlation functions of the real eigenvalues in the bulk, origin and spectral edge regimes. By a suitable strengthening of the convergence at the edge, we also obtain the limiting fluctuations of the largest real eigenvalue. Near the origin we find new limiting distributions characterising the smallest positive real eigenvalue.
\end{abstract}

\section{Introduction and main results}
For a real random matrix $G$ of size $N \times N$, a basic question of random matrix theory asks simply what is the total number of real eigenvalues of $G$? This question goes back to the work of Edelman, Kostlan and Shub \cite{EKS94} who first computed the expected number of real eigenvalues in the case that the entries of $G$ are all real \textit{i.i.d.} standard Gaussian random variables. This class of random matrices is known as the \textit{real Ginibre ensemble} after Ginibre's 1965 work \cite{Gin65} and is sometimes referred to as GinOE (Ginibre Orthogonal Ensemble) due to its invariance under orthogonal transformations. It is somewhat notorious for being the most technically demanding of the known classical ensembles of Gaussian random matrices, taking more than 40 years to completely understand the structure of its eigenvalue point process \cite{LS91,Sin07, FN07, SW08, BS09}. These works demonstrate an exact solvabality of the GinOE: the eigenvalues (real or complex) form a Pfaffian point process with an explicit correlation kernel. By analysing this kernel, asymptotics of the point process were obtained in the limiting regimes, both in the bulk of the spectrum and near the spectral edges. This led to further investigations showing that the real eigenvalue statistics of real asymmetric random matrices have surprising connections to other fields including connections between the real Ginibre ensemble, annihilating Brownian motions \cite{TZ11} and integrable PDEs \cite{BB20}, and between truncated orthogonal random matrices and Kac polynomials \cite{F10}.

Recently there have been considerable developments pertaining to \textit{products} of random matrices, see for example the survey \cite{AI15}. The main model that we will consider in this paper is a product of independent real Ginibre random matrices. This product ensemble was studied in \cite{IK14, FI16} where it was shown that the eigenvalue correlation functions continue to enjoy the Pfaffian structure known for a single real Ginibre matrix. However, the correlation kernel is considerably more complicated for a general product, being expressed in terms of Meijer G-functions or certain multiple integrals that do not have closed form expressions. For this reason, many basic asymptotic questions regarding products of real Ginibre random matrices remain open. The main notable exception to this concerns the full complex spectrum of the product, for which the techniques of free probability have been successfully applied, see \textit{e.g.} \cite{BJW10, BNS12}. It is less clear that such techniques can be applied to the real eigenvalue statistics considered in this article.

\subsection{Fluctuations of the real eigenvalues}
\label{se:bulkfluct}
In previous works the main available asymptotic results concern the \textit{expected number} of real eigenvalues \cite{FI16,S17}, extending the corresponding result of \cite{EKS94} to products of random matrices. Here we take the next natural step of probability theory and look at the fluctuations. Let $G_{1}, \ldots, G_{m}$ be \textit{i.i.d.} copies of real Ginibre matrices of size\footnote{We assume throughout the article that $N$ is even. We expect similar results to hold when $N$ is odd.} $N \times N$ and consider the product $G^{(m)} = N^{-\frac{m}{2}}\,G_{1}G_{2}\ldots G_{m}$. With this choice of normalisation, the real eigenvalues of $G^{(m)}$ have a limiting empirical spectral distribution supported on the open interval $(-1,1)$ with density given by $\rho(x) = \frac{1}{2m}\,|x|^{\frac{1}{m}-1}\,\mathbbm{1}_{x \in (-1,1)}$, see \cite{FI16,S17}. Let $n=N_{\mathbb{R}}$ denote the total number of real eigenvalues of $G^{(m)}$, and let $\lambda_1,\ldots,\lambda_n$ denote each individual real eigenvalue. Then we define the linear statistic:
\begin{equation}
\xi_{N,m}(f) = \sum_{j=1}^{n}f(\lambda_j). \label{linstat}
\end{equation}
Our first result concerns the limiting distribution of the random variables $\xi_{N,m}(f)$ as $N \to \infty$.
\begin{theorem}
\label{th:ginconv}
Let $f$ be a locally integrable and measurable function satisfying the bound,
\begin{equation}
\sup_{x \in \mathbb{R}}\bigg\{|f(x)|e^{-c|x|^{\frac{2}{m}}}\bigg\} < \infty \label{growthcond}
\end{equation}
for all $c>0$. Then we have the convergence in distribution to a normal random variable,
\begin{equation}
\frac{\xi_{N,m}(f)-\mathbb{E}(\xi_{N,m}(f))}{N^{\frac{1}{4}}} \overset{d}{\longrightarrow} \mathcal{N}(0,\sigma^{2}(f)), \qquad N \to \infty, \label{normalconv}
\end{equation}
with limiting variance
\begin{equation}
\sigma^{2}(f) = \sqrt{\frac{2m}{\pi}}\,(2-\sqrt{2})\,\int_{-1}^{1}dx\,\rho(x)f(x)^{2}. \label{varlim}
\end{equation}
\end{theorem}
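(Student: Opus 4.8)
The plan is to prove Theorem~\ref{th:ginconv} by the method of cumulants, exploiting the fact that the real eigenvalues of $G^{(m)}$ form a Pfaffian point process on $\mathbb{R}$ whose connected correlations decay rapidly, so that $\xi_{N,m}(f)$ behaves like a sum of order $\sqrt{N}$ weakly dependent terms --- consistent with $\mathbb{E}(N_\mathbb{R})\asymp\sqrt{N}$ and hence with the normalisation by $N^{1/4}=(\sqrt N)^{1/2}$ in \eqref{normalconv}. Write $R^{(N)}_k$ for the $k$-point correlation functions of this process, so that $R^{(N)}_k=\operatorname{Pf}\big[\mathbf{K}_N(x_i,x_j)\big]_{i,j=1}^{k}$ for the $2\times2$ matrix kernel $\mathbf{K}_N$ of \cite{IK14,FI16}, and write $\kappa_p$ for the $p$-th cumulant of $\xi_{N,m}(f)$. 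It suffices to establish: (i) $\kappa_p=O(\sqrt N)$ for every $p\ge 1$; and (ii) $\kappa_2=\operatorname{Var}\big(\xi_{N,m}(f)\big)=\sigma^2(f)\sqrt N\,(1+o(1))$ with $\sigma^2(f)$ as in \eqref{varlim}. Indeed, the $p$-th cumulant of the centred, normalised statistic is then $N^{-p/4}\kappa_p=O(N^{(2-p)/4})$, which tends to $0$ for $p\ge 3$ (and for $p=1$ after centring) and to $\sigma^2(f)$ for $p=2$; since $\mathcal N(0,\sigma^2(f))$ is determined by its moments, convergence of all cumulants yields \eqref{normalconv}. (Finiteness of all moments of $\xi_{N,m}(f)$, needed for this, follows from \eqref{growthcond}.)

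For (i), use the standard identity expressing $\kappa_p$ as a finite alternating sum of integrals $\int_{\mathbb{R}^{j}}|f(x_1)|^{k_1}\cdots|f(x_j)|^{k_j}\,T^{(N)}_j(x_1,\dots,x_j)\,dx_1\cdots dx_j$ over the truncated (connected) correlation functions $T^{(N)}_j$, with $k_1+\cdots+k_j=p$; for a Pfaffian process each $T^{(N)}_j$ is an explicit cyclic sum of products of entries of $\mathbf{K}_N$. The analytic heart is then a \emph{uniform decorrelation estimate}: after the appropriate conjugation and local rescaling, the entries of $\mathbf{K}_N(x,y)$ decay in $|x-y|$ --- Gaussian-fast in the locally rescaled variable $\sqrt N\,(x-y)$ --- uniformly in $N$ and across the bulk $(-1,1)$, with controlled transition through the edges $\pm1$ and with $R^{(N)}_1(x)$ super-exponentially small for $|x|>1$. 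Granting this, together with $R^{(N)}_1(x)\asymp\sqrt N$ on compact subsets of $(-1,1)\setminus\{0\}$, a simple scaling count finishes (i): the connected $j$-point functions are concentrated on the region $|x_i-x_1|=O(N^{-1/2})$, where $|T^{(N)}_j|=O\big((\sqrt N)^j\big)$, so $\int_{\mathbb{R}^j}|T^{(N)}_j|=O\big((\sqrt N)^j\,(N^{-1/2})^{j-1}\big)=O(\sqrt N)$, whence $\kappa_p=O(\sqrt N)$. The growth condition \eqref{growthcond}, whose exponent $2/m$ matches the large-deviation tail of the real-eigenvalue density outside $(-1,1)$, is exactly what makes all these integrals finite and confines the contribution of eigenvalues outside the bulk to a lower-order error.

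For (ii), split $\kappa_2/\sqrt N = \tfrac1{\sqrt N}\int f(x)^2 R^{(N)}_1(x)\,dx + \tfrac1{\sqrt N}\iint f(x)f(y)\big(R^{(N)}_2(x,y)-R^{(N)}_1(x)R^{(N)}_1(y)\big)\,dx\,dy$. By the one-point asymptotics of \cite{FI16,S17}, $\tfrac1{\sqrt N}R^{(N)}_1(x)\to\sqrt{2m/\pi}\,\rho(x)$ in the bulk, so the first term tends to $\sqrt{2m/\pi}\int_{-1}^{1}\rho(x)f(x)^2\,dx$. For the second, rescale near each bulk point $x_0$ at the local scale $1/R^{(N)}_1(x_0)$; by the universality result proved elsewhere in the paper, the rescaled process converges to a translation-invariant limiting Pfaffian process whose intensity-one two-point correlation function $r$ satisfies $\int_{\mathbb{R}}\big(1-r(w)\big)\,dw=\sqrt2-1$, so this term contributes $-(\sqrt2-1)\sqrt{2m/\pi}\int_{-1}^{1}\rho f^2$ (the interchange of limit and integral being justified by the uniform decorrelation bounds via dominated convergence, and the edge windows $x\approx\pm1$ contributing only $O(1)$). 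Adding the two gives $\sigma^2(f)=\big(1-(\sqrt2-1)\big)\sqrt{2m/\pi}\int_{-1}^{1}\rho f^2=(2-\sqrt2)\sqrt{2m/\pi}\int_{-1}^{1}\rho(x)f(x)^2\,dx$, which is \eqref{varlim}.

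The main obstacle is the uniform decorrelation estimate for $\mathbf{K}_N$: the correlation kernel of the product ensemble is expressed through Meijer $G$-functions, equivalently through iterated contour integrals without closed form, so proving bounds that are uniform simultaneously in $N$, in the macroscopic location, and in the tails --- and strong enough to dominate every cumulant integral --- is the technical core of the argument. A secondary difficulty is the constant-chasing in (ii): verifying the universal identity $\int_{\mathbb{R}}(1-r(w))\,dw=\sqrt2-1$ for the limiting bulk process (the source of the factor $2-\sqrt2$) and arranging normalisations so that the dependence on $m$ enters only through $\rho$ and the prefactor $\sqrt{2m/\pi}$.
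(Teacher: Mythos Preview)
Your overall strategy---cumulant method via cluster functions, with a uniform decorrelation estimate on the Pfaffian kernel as the technical core---is exactly the one the paper uses, and your bound $\kappa_p=O(\sqrt N)$ and its deduction from clustering matches the paper's Sections~3.2--3.4 essentially verbatim. Two points of difference are worth noting. First, the paper makes the truncation explicit: it splits $\xi_{N,m}(f)=\tilde\xi_{N,m}(f)+\tilde\xi^{\mathrm c}_{N,m}(f)$ according to whether the eigenvalue lies in the bulk set $\mathcal E_N$ (bounded away from $0$ and $\pm1$ by $N^{-m/2+\epsilon}$ and $N^{-1/2+\epsilon}$), proves $\mathbb E|\tilde\xi^{\mathrm c}_{N,m}(f)|=O(N^\epsilon)$ separately, and then runs the cumulant argument only on $\tilde\xi_{N,m}(f)$ where the global kernel approximation (Theorem~\ref{th:globintro}) is available. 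This is cleaner than trying to handle the origin and edge windows inside the cumulant integrals themselves. Second, and more substantively, the paper does \emph{not} compute the variance via bulk universality and a ``universal identity'' $\int(1-r)=\sqrt2-1$ as you propose: instead it inserts the global kernel asymptotics directly into the exact Pfaffian variance formula $\int f^2 S_N-\iint ff\,D_NI_N-\iint ff\,S_NS_N$, and then---because $f$ is merely measurable and locally integrable, not continuous---uses Parseval's identity (recognising the $y$-integral as a convolution with a Gaussian) to pass to the limit in the off-diagonal terms. Your localisation ``near each bulk point $x_0$'' would implicitly need some continuity of $f$ to freeze $f(y)\approx f(x)$, so under the hypotheses of Theorem~\ref{th:ginconv} your route to the variance has a small gap that the Parseval trick fills. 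Apart from this, your proposal is correct and aligned with the paper.
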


In the particular case $f(x) \equiv 1$ the above gives a central limit theorem (CLT) for the number of real eigenvalues of the product matrix $G^{(m)}$. Forrester, Ipsen and Kumar \cite{FIK20} have conjectured such a 
CLT for a product model consisting of truncated orthogonal random matrices, we discuss this model and the corresponding CLT further in Section \ref{se:beyond}. The conditions on $f$ in Theorem \ref{th:ginconv} are quite mild, in particular we do not require compact support, smoothness or even continuity. Previous results comparable to Theorem \ref{th:ginconv} are for a single matrix $m=1$ and in both available cases $f$ is restricted to polynomials \cite{S17-2}, or smooth functions with support strictly contained in $(-1,1)$ \cite{K15}. 

Theorem \ref{th:ginconv} also extends to $N$-dependent test functions. For a fixed $E \in (-1,1)$ and an exponent $\tau > 0$, define
\begin{equation}
\label{mesolinstat}
\xi^{(\tau)}_{N,m}(f) = \sum_{j=1}^{n}f(N^{\tau}(E-\lambda_j)). 
\end{equation}
Then we will refer to $\xi^{(\tau)}_{N,m}(f)$ as a \textit{mesoscopic} linear statistic, as it samples roughly $N^{\frac{1}{2}-\tau}$ points (at least when $E \neq 0$). This regime is intermediate between the global regime, which samples $N^{\frac{1}{2}}$ points, and the local regime which samples $O(1)$ points.
\begin{theorem}
\label{th:ginconvmeso}
Suppose that $f$ is a locally integrable and measurable function satisfying $f \in L^{2}(\mathbb{R})$ and the boundedness $\sup_{x\in \mathbb{R}}\{|(1+|x|)^{1+\delta}f(x)^{2}|\} < \infty$ for some $\delta>0$. Assume $0 < \tau < \frac{1}{2}$ and $E \in (-1,1)\setminus\{0\}$ are fixed. Then we have the convergence in distribution to a normal random variable,
\begin{equation}
\frac{\xi^{(\tau)}_{N,m}(f)-\mathbb{E}(\xi^{(\tau)}_{N,m}(f))}{N^{\frac{1}{4}-\frac{\tau}{2}}} \overset{d}{\longrightarrow} \mathcal{N}(0,\sigma^{2}(f)), \qquad N \to \infty, \label{normalconvmeso}
\end{equation}
with limiting variance
\begin{equation}
\sigma^{2}(f) = \sqrt{\frac{2m}{\pi}}\,(2-\sqrt{2})\,\rho(E)\int_{-\infty}^{\infty}dx\,f(x)^{2}. \label{varlimmeso}
\end{equation}
\end{theorem}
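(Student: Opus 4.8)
The plan is to follow the route used for Theorem~\ref{th:ginconv}: the real eigenvalues of $G^{(m)}$ form a Pfaffian point process whose correlation functions decorrelate uniformly, so the central limit theorem can be obtained from convergence of cumulants, with the limiting variance computed directly. Write $g_{N}(x)=f(N^{\tau}(E-x))$, so that $\xi^{(\tau)}_{N,m}(f)=\sum_{j}g_{N}(\lambda_{j})$; the essential support of $g_{N}$ is an interval of width $O(N^{-\tau})$ around $E$, on which the macroscopic density $\rho$ is essentially constant and bounded away from $0$ and $\infty$ — this is where the assumption $E\neq 0$ enters, and why $\rho(E)$ appears in \eqref{varlimmeso}. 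Since $0<\tau<\tfrac12$, this window is far wider than the microscopic spacing $\sim N^{-1/2}$ of the real eigenvalues, so $g_{N}$ varies slowly on the microscopic scale; this is the structural reason the limiting variance is proportional to $\int f^{2}$.

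\emph{The variance.} Writing it through the one- and truncated two-point correlation functions $\rho_{1},\rho_{2}^{T}$ of the real eigenvalue process,
\[
\operatorname{Var}\big(\xi^{(\tau)}_{N,m}(f)\big)=\int g_{N}^{2}\,\rho_{1}+\iint g_{N}(x)g_{N}(y)\,\rho_{2}^{T}(x,y)\,dx\,dy,
\]
I would rescale to microscopic coordinates $x=E+s/d_{N}$, with $d_{N}=\sqrt{2mN/\pi}\,\rho(E)$ the local density at $E$, and use the bulk scaling limit of the real-eigenvalue kernel at $E\neq0$ established elsewhere in this paper, namely $\rho_{1}(E+s/d_{N})/d_{N}\to 1$ and $\rho_{2}^{T}(E+s/d_{N},E+t/d_{N})/d_{N}^{2}\to R(s-t)$ with $R$ explicit and integrable. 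The diagonal term then equals $d_{N}\int g_{N}^{2}\,(1+o(1))=N^{\frac12-\tau}\sqrt{2m/\pi}\,\rho(E)\int f^{2}\,(1+o(1))$, while in the off-diagonal term the rescaled test functions become $f(-\varepsilon_{N}s)$ with $\varepsilon_{N}=N^{\tau-1/2}/(\sqrt{2m/\pi}\,\rho(E))\to 0$, and they are integrated against the approximate identity $\varepsilon_{N}^{-1}R(\cdot/\varepsilon_{N})$ of total mass $\widehat R:=\int_{\mathbb R}R$, producing $N^{\frac12-\tau}\sqrt{2m/\pi}\,\rho(E)\,\widehat R\int f^{2}\,(1+o(1))$; only $f\in L^{2}$ is needed here, no regularity of $f$. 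A direct computation of $R$ gives $1+\widehat R=2-\sqrt2$, hence \eqref{varlimmeso} on dividing by $N^{\frac12-\tau}$. Real eigenvalues far from $E$ and the finitely many complex eigenvalues near the real axis are negligible: $f(x)^{2}\lesssim(1+|x|)^{-1-\delta}$ and $\rho_{1}\lesssim\sqrt N$ bound their contribution by $N^{\frac12-\tau}\int_{|u|>A}f^{2}\to 0$ as $A\to\infty$.

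\emph{Higher cumulants and asymptotic normality.} It remains to show that for every $k\geq 3$ the $k$-th cumulant $C_{k}$ of $\xi^{(\tau)}_{N,m}(f)$ is $o\big(N^{k(\frac14-\frac\tau2)}\big)$; with the variance asymptotics, and since a normal law is determined by its cumulants, this gives \eqref{normalconvmeso}. Expanding $C_{k}$ as the usual signed sum of integrals of products of kernel entries against $g_{N}$, the uniform decorrelation estimates force each connected term to be extensive, $C_{k}=O\big(N^{\frac12-\tau}\big)$ — roughly $C_{k}$ is of the order of the number $\sim N^{\frac12-\tau}$ of sampled points, the least-connected piece being $\int g_{N}^{k}\rho_{1}\lesssim\sqrt N\,\|g_{N}\|_{L^{k}}^{k}=O(N^{\frac12-\tau})$; note $f\in L^{k}$ for all $k\geq 2$, since the hypothesis forces $f$ to be bounded with $f^{2}\lesssim(1+|x|)^{-1-\delta}$. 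As $N^{k(\frac14-\frac\tau2)}=(N^{\frac12-\tau})^{k/2}$ and $\tfrac12-\tau>0$, we conclude $C_{k}/N^{k(\frac14-\frac\tau2)}=O\big(N^{(\frac12-\tau)(1-k/2)}\big)\to 0$.

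\emph{Main obstacle.} The delicate point is to make the bound $C_{k}=O(N^{\frac12-\tau})$ (and the error control in the variance) genuinely rigorous at the mesoscopic scale: one must check that the decorrelation estimates are quantitatively strong enough — uniform in $N$ and with summable decay — after rescaling by $N^{\tau}$, so that the error terms do not accumulate over the $\sim N^{1/2-\tau}$ relevant eigenvalues. (Equivalently, one could partition the window into blocks and invoke an $m$-dependent-type CLT, but the cumulant route is cleanest.) Once this is in place the rest is a routine adaptation of the proof of Theorem~\ref{th:ginconv}.
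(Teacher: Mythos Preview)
Your overall strategy is correct and parallels the paper's: restrict to the bulk region, compute the variance from the kernel asymptotics, and bound higher cumulants via clustering. Your variance argument is essentially equivalent to the paper's Proposition~\ref{prop:var}, though the paper works directly with the global approximation (Theorem~\ref{th:globintro}) and the intermediate formula~\eqref{varmesoproof} rather than passing through the microscopic bulk limit.

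The substantive difference is in the treatment of higher cumulants. You aim for the sharp bound $C_k = O(N^{\frac12-\tau})$, which would make $C_k/N^{k(\frac14-\frac\tau2)} \to 0$ for every $k\ge 3$ and give normality directly by the method of moments; you correctly flag making this rigorous as the main obstacle. The paper does \emph{not} prove this refined estimate. Instead it observes that the hypothesis forces $f$ to be bounded, so the global cumulant bound of Theorem~\ref{th:cumulantbnd} applies verbatim to the mesoscopic test function $g_N$ and yields only $C_k = O(\sqrt N)$. This gives $C_k/N^{k(\frac14-\frac\tau2)} = O\bigl(N^{\frac12 - k(\frac14-\frac\tau2)}\bigr)$, which tends to zero only for $k$ larger than $2/(1-2\tau)$ --- in particular not for all $k\ge3$ when $\tau$ is close to $\tfrac12$. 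The paper then closes the argument by invoking Marcinkiewicz's theorem: any limit law whose cumulants of all sufficiently high orders vanish must be Gaussian. So the obstacle you identify is genuine for your route, but the paper sidesteps it entirely with a cruder cumulant bound plus Marcinkiewicz. Your approach would yield a stronger quantitative estimate if carried through, but the paper's shortcut is cheaper and already suffices for the CLT. (Minor aside: complex eigenvalues never enter the statistic $\xi^{(\tau)}_{N,m}(f)$, so there is nothing to control there.)
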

The result \eqref{normalconvmeso} is expected to be sharp in the sense that if $\tau = \frac{1}{2}$ we do not expect convergence to a Gaussian. We will see in Theorem \ref{th:bulkconvintro} below that at scale $\tau = \frac{1}{2}$ the point process of real eigenvalues converges to a universal object related to systems of annihilating Brownian motions. These central limit theorems thus bare some comparison to those proved for related interacting particle systems such as the Arratia flow \cite{GF18}. The constant $2-\sqrt{2}$ in the variance formula \eqref{varlim} was first highlighted in the $m=1$ case in \cite{FN07} and has been referred to as the compressibility of the point process \cite{F15}. In Theorem \ref{th:mesozero} we present a CLT for the case $E=0$ of the linear statistic \eqref{mesolinstat}.

The proofs of Theorems \ref{th:ginconv} and \ref{th:ginconvmeso} are based on two main ideas. The first is to exploit the Pfaffian structure of the correlation functions obtained in \cite{FI16} and the explicitly known kernel for the point process of real eigenvalues. This allows in principle to write down exact formulas for the cumulants of random variable \eqref{linstat} in terms of integrals involving the test function $f$ and the correlation functions. This approach is well known, see for example \cite{S00}. 

The second idea we use has not been exploited so much in random matrix theory. It is based on the theory of weak dependence, which applies to large classes of point processes whose correlation functions factorise asymptotically at a suitable rate. If this factorisation occurs quickly enough (\textit{e.g.} exponentially fast) then it is possible to deduce limiting Gaussian distributions for statistics of type \eqref{linstat} under certain conditions. Ideas of this type have been known for some time in the physics literature, especially regarding the statistical mechanics of Coulomb systems \cite{MY80}. The theory was further developed and applied with considerable success to point processes and other weakly dependent particle systems \cite{IV82, GF18, NS12,BYY19}, but to our knowledge was not yet applied to Pfaffian point processes or the ensembles considered here. Use of this theory simplifies the estimates of the cumulants and leads to quick and conceptually simple proofs of central limit theorems for real eigenvalue statistics.

\subsection{Kernel asymptotics, correlation decay, and universality}
\label{se:kernasympt}
We begin by recalling the basic structure of the Pfaffian point process that describes the real eigenvalues for products of finite size real Ginibre matrices. Let us say that a $2 \times 2$ matrix kernel $K(x,y)$ is in \textit{derived form} if it can be represented as
\begin{equation}
K(x,y) = \begin{pmatrix} -\frac{\partial}{\partial y}S(x,y) & S(x,y)\\ -S(y,x) & -\int_{x}^{y}dt\,S(t,y)+\frac{1}{2}\,\mathrm{sgn}(x-y) 
\end{pmatrix},\label{derivform}
\end{equation}
for some scalar kernel $S(x,y)$. Therefore the scalar function of two variables $S(x,y)$ completely characterises the kernel and the corresponding point process. The eigenvalue point process of many classical ensembles of random matrix theory take this form. The mentioned GinOE is one example where this structure arises, but also such kernels arise in the analysis of the better known Gaussian Orthogonal Ensemble (GOE) \cite{Meh04}, the real elliptic ensemble \cite{FN08}, truncated orthogonal random matrices \cite{KSZ10} and products thereof \cite{IK14,FIK20}. Certain interacting particle systems also fit into this framework with an additional parameter that corresponds to thinning 
a point process \cite{TZ11,GPTZ18, GTZ20}. Ipsen and Kieburg \cite{IK14}, and Forrester and Ipsen \cite{FI16} have shown that this structure holds for products of GinOE random matrices.
\begin{theorem}[Forrester and Ipsen \cite{FI16}]
\label{th:ik}
Let $G^{(m)} = N^{-\frac{m}{2}}\,G_{1}G_{2}\ldots G_{m}$ be a product of $m$ independent real Ginibre random matrices of size $N \times N$. Then the real eigenvalues of $G^{(m)}$ form a Pfaffian point process with kernel $K_{N}(x,y)$ in the derived form \eqref{derivform} with scalar kernel
\begin{equation}
S_{N}(x,y) = \frac{N^{\frac{3m}{2}}}{(2\sqrt{2\pi})^{m}}\int_{\mathbb{R}}dv\,(x-v)\mathrm{sgn}(y-v)w(N^{\frac{m}{2}}x)w(N^{\frac{m}{2}}v)f_{N-2}(N^{m}xv) \label{prekernel}
\end{equation}
where 
\begin{equation}
w(x) = \int_{\mathbb{R}^{m}}\mathrm{exp}\left(-\frac{1}{2}\sum_{j=1}^{m}\lambda_{j}^{2}\right)\delta(x-\lambda_{1}\ldots \lambda_{m})\,d\lambda_{1}\ldots d\lambda_{m}, \label{ginweightdef1}
\end{equation}
and
\begin{equation}
f_{N-2}(x) = \sum_{j=0}^{N-2}\frac{x^{j}}{(j!)^{m}} .\label{ginfdef1}
\end{equation}
More precisely, given the matrix kernel $K_{N}(x,y)$ constructed as in \eqref{derivform}, the $k^{\mathrm{th}}$ order correlation functions of the real eigenvalues of $G^{(m)}$ are given by the Pfaffian,
\begin{equation}
\rho^{(k)}_{N}(x_1,\ldots,x_k) = \mathrm{Pf}\bigg\{K_{N}(x_i,x_j)\bigg\}_{i,j=1}^{k}.
\end{equation}
\end{theorem}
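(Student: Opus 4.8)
This is the theorem of Forrester and Ipsen \cite{FI16} (developing Ipsen and Kieburg \cite{IK14}); we indicate the route one would take. The first and substantive step is to obtain the joint density of the eigenvalues of $G^{(m)}$. Iterating the real Schur decomposition through the $m$ factors, one shows that on the event where $G^{(m)}$ has $\ell$ real eigenvalues $\lambda_{1},\dots,\lambda_{\ell}$ and $(N-\ell)/2$ complex conjugate pairs $\{z_{r},\bar z_{r}\}$, the joint density is proportional to
\[
\Big|\Delta\big(\{\lambda_{i}\}\cup\{z_{r},\bar z_{r}\}\big)\Big|\ \prod_{i=1}^{\ell} w\big(N^{\frac m2}\lambda_{i}\big)\ \prod_{r} w\big(N^{\frac m2}z_{r}\big)\,w\big(N^{\frac m2}\bar z_{r}\big)\,\mathrm{erfc}\big(\sqrt{2}\,N^{\frac m2}\,\mathrm{Im}\,z_{r}\big),
\]
where $\Delta$ is the Vandermonde determinant of all $N$ eigenvalues and $w$ is the multiplicative convolution of $m$ centred Gaussians in \eqref{ginweightdef1}. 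Establishing this Vandermonde-times-weight structure is the genuine content of \cite{IK14,FI16} and rests on group-integral computations for products of Gaussian matrices; there is no shortcut.

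Given this density, the rest is the standard machinery for Pfaffian ensembles. Summing over the number $\ell$ of real eigenvalues and integrating out all complex pairs via the de Bruijn integration identities in the form adapted to Ginibre-type ensembles (Sinclair \cite{Sin07}, Borodin--Sinclair \cite{BS09}, Forrester--Nagao \cite{FN07}) collapses the correlation functions to Pfaffians $\rho^{(k)}_{N}=\mathrm{Pf}[K_{N}(x_{i},x_{j})]$, with a $2\times2$ matrix kernel automatically of the derived form \eqref{derivform}. The scalar kernel $S_{N}$ is then built from a family of skew-orthogonal polynomials $\{p_{j}\}_{j\ge0}$ for the antisymmetric pairing $\langle g,h\rangle=\iint g(x)h(y)\,\mathrm{sgn}(y-x)\,w(N^{\frac m2}x)\,w(N^{\frac m2}y)\,dx\,dy$ and their sgn-integral partners $\epsilon p_{j}(x)=\int\mathrm{sgn}(x-t)\,p_{j}(t)\,w(N^{\frac m2}t)\,dt$, through a finite sum $S_{N}(x,y)=\sum_{j=0}^{N/2-1}r_{j}^{-1}\big(p_{2j+1}(x)\,\epsilon p_{2j}(y)-p_{2j}(x)\,\epsilon p_{2j+1}(y)\big)$ up to the relevant Gaussian prefactors.

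It remains to identify the skew-orthogonal polynomials and to resum. Because $w$ is a multiplicative convolution of Gaussians, its Mellin transform factorises into a product of $m$ Gamma functions; the moments $\iint x^{a}y^{b}\,\mathrm{sgn}(y-x)\,w(N^{\frac m2}x)\,w(N^{\frac m2}y)$ are therefore explicit, and one checks directly that the monomials are already skew-orthogonal, $\langle x^{2i},x^{2j}\rangle=\langle x^{2i+1},x^{2j+1}\rangle=0$ and $\langle x^{2j},x^{2j+1}\rangle=r_{j}$ with $r_{j}$ proportional to $(2\sqrt{2\pi})^{m}$ times a factorial raised to the power $m$, so that $p_{2j}(x)=x^{2j}$ and $p_{2j+1}(x)=x^{2j+1}$ up to lower-order terms that are killed on skew-orthogonalisation. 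Substituting these into the sum for $S_{N}$, the polynomial block resums through $\sum_{j}(xv)^{j}/(j!)^{m}$ into the truncated series $f_{N-2}(N^{m}xv)$ of \eqref{ginfdef1}, while combining the polynomial part with the sgn-integral partner produces both the factor $(x-v)\,\mathrm{sgn}(y-v)$ and the single $v$-integration in \eqref{prekernel}; tracking the rescalings $\lambda\mapsto N^{\frac m2}\lambda$ and the normalisations $r_{j}$ yields the prefactor $N^{\frac{3m}{2}}/(2\sqrt{2\pi})^{m}$. The main obstacle is thus concentrated in the very first step---the harmonic analysis producing the Vandermonde-times-weight form of the joint eigenvalue density of a product---after which the Pfaffian structure follows from de Bruijn's identities and the explicit kernel from the factorised moments of $w$, the remaining work being the bookkeeping of constants.
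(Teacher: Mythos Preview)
The paper does not prove this theorem; it is quoted from Forrester and Ipsen \cite{FI16} (building on \cite{IK14}) and used as the starting point for the subsequent asymptotic analysis. Your sketch is a faithful high-level outline of the route taken in those references: the joint eigenvalue density via iterated Schur decomposition, the Pfaffian structure via de Bruijn/Sinclair-type integration, and the explicit kernel via skew-orthogonal polynomials whose moments factorise because $w$ is a multiplicative Gaussian convolution. Since there is no proof in the present paper to compare against, your sketch stands as a reasonable summary of the cited literature, with the caveat that the identification of the skew-orthogonal polynomials and normalisations $r_j$ is somewhat more delicate than ``monomials are already skew-orthogonal''; in \cite{FI16} the even polynomials are monomials but the odd ones pick up a specific lower-order correction, and the bookkeeping of constants is where most of the work lies.
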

Given such an exact structure, it is then of interest to calculate the scaled asymptotic behaviour as $N \to \infty$ of the correlation functions in suitable regimes. Our first result of this type gives a strong uniform approximation of the kernel $K_{N}(x,y)$ as $N \to \infty$ in the bulk of the spectrum. It is convenient to define explicitly the scalar kernels in the diagonal entries of the derived form \eqref{derivform},
\begin{align}
D_{N}(x,y) &= -\frac{\partial}{\partial y}S_{N}(x,y) \label{dkernintro},\\
I_{N}(x,y) &= -\int_{x}^{y}dt\,S_{N}(t,y)+\frac{1}{2} \,\mathrm{sgn}(x-y).\label{ikernintro}
\end{align}
We also define the complementary error function,
\begin{equation}
\mathrm{erfc}(z) = \frac{2}{\sqrt{\pi}}\,\int_{z}^{\infty}dt\,e^{-t^{2}}.
\end{equation}
\begin{theorem}[Strong global approximation]
\label{th:globintro}
Given $\epsilon>0$ small, consider the following subset of $[0,1]$,
\begin{equation}
E_{N} = \{x \in [0,1] : \{|x| > N^{-\frac{m}{2}+\epsilon}\} \wedge \{x < 1-N^{-\frac{1}{2}+\epsilon}\}\}. \label{ENsetintro}
\end{equation}
Then the following estimates hold uniformly on $x,y \in E_{N}^{\frac{1}{m}}$ as $N \to \infty$,
\begin{align}
x^{m-1}S_{N}(x^{m},y^{m}) &= \sqrt{\frac{N}{2m\pi}}e^{-\frac{Nm}{2}(x-y)^{2}}(1+o(1)) + O(e^{-N^{\epsilon}}), \label{Snxyintro} \\
(xy)^{m-1}D_{N}(x^{m},y^{m}) &= \frac{N^{\frac{3}{2}}}{m^{\frac{3}{2}}\sqrt{2\pi}}\,m(y-x)\,e^{-\frac{Nm}{2}(x-y)^{2}}(1+o(1))+O(e^{-N^{\epsilon}}), \label{Dnxyintro}\\
I_{N}(x^{m},y^{m}) &= \frac{1}{2}\mathrm{sgn}(x-y)\,\mathrm{erfc}\left(\frac{\sqrt{Nm}(|y-x|)}{\sqrt{2}}\right)(1+o(1)) + O(e^{-N^{\epsilon}}). \label{Inxyintro}
\end{align}
If $x$ and $y$ are negative, and $x,y \in -E_{N}^{\frac{1}{m}}$, the same results apply using the symmetries $S_{N}(-x,-y) = S_{N}(x,y)$, $D_{N}(-x,-y) = -D_{N}(x,y)$ and $I_{N}(-x,-y) = -I_{N}(x,y)$. If $x$ and $y$ have mixed signs, and $|x|,|y| \in E_{N}$ then all three kernels are $O(e^{-N^{\epsilon}})$.
\end{theorem}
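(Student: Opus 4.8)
The plan is to start from the exact integral representation \eqref{prekernel}, reduce the $v$-integral to an explicit Laplace integral by an exact cancellation, and then insert uniform large-parameter asymptotics of the weight $w$ and the truncated sum $f_{N-2}$. For the cancellation: from \eqref{ginweightdef1} the moment $\int_{\mathbb{R}}u^k w(u)\,du$ equals the $m$-th power of the $k$-th Gaussian moment, hence vanishes for $k$ odd. Expanding $f_{N-2}$ as a polynomial in $v$ and integrating $\int_{\mathbb{R}}(x-v)\,w(N^{m/2}v)f_{N-2}(N^m xv)\,dv$ term by term, using that $N$ is even, everything cancels except a single leftover term, proportional to $x\,(N^m x^2/2^m)^{N/2-1}/((N/2-1)!)^m$; this is a term of the series $\sum_i z^i/(i!)^m$ at index $N/2-1$ with $z=N^m x^2/2^m$, i.e.\ at distance $\gtrsim N^{1/2+\epsilon}$ beyond the maximum of that series when $x$ is in the bulk, hence super-exponentially small. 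Writing $\mathrm{sgn}(y-v)=1-2\,\mathbbm{1}_{v>y}$ then reduces \eqref{prekernel}, up to $O(e^{-N^{\epsilon}})$, to
\[
S_N(x,y)=-\frac{2N^{3m/2}}{(2\sqrt{2\pi})^{m}}\,w(N^{m/2}x)\int_y^\infty(x-v)\,w(N^{m/2}v)\,f_{N-2}(N^m xv)\,dv ,
\]
which is the form I would analyse.

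First I would establish the needed asymptotics. Laplace's method on the $m$-fold integral \eqref{ginweightdef1} (equivalently, the standard asymptotics of the Meijer $G$-function it represents) gives $w(t)\sim c_m\,t^{1/m-1}e^{-\frac m2 t^{2/m}}$ as $t\to+\infty$, while $\sum_i z^i/(i!)^m\sim c_m'\,z^{-(m-1)/(2m)}e^{mz^{1/m}}$ as $z\to+\infty$. The quantitative heart of the matter is that $f_{N-2}(N^m xv)$ agrees with this full series up to $O(e^{-N^{\epsilon}})$ once $xv<1-N^{-1/2+\epsilon}$ — the truncation at index $N-2$ only bites when $N(xv)^{1/m}$ lies within $O(N^{1/2})$ of $N$, a transition of width $N^{-1/2}$ in $xv$ from which the set $E_N$ keeps a distance $N^{-1/2+\epsilon}$ — whereas for $xv<0$ the sum has no exponential growth and is overwhelmed by the decay of $w(N^{m/2}v)$. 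With these substitutions the exponent in the $v$-integral is $g(v)=Nm\big((xv)^{1/m}-\tfrac12 v^{2/m}\big)$, which has a unique nondegenerate maximum at $v=x$, where $g''(x)=-\tfrac Nm x^{2/m-2}$ and the prefactor $w(N^{m/2}x)$ cancels $e^{g(x)}$ exactly. Laplace's method plus the elementary identity $\int_y^\infty(x-v)e^{-A(v-x)^2}\,dv=-\tfrac1{2A}e^{-A(y-x)^2}$ with $A=-\tfrac12 g''(x)$ produce the Gaussian; the substitution $x\mapsto x^m$, $y\mapsto y^m$ turns $A(y-x)^2$ into $\tfrac{Nm}{2}(y-x)^2$ — because $y^m-x^m\sim m x^{m-1}(y-x)$ on the $O(N^{-1/2})$ scale where the Gaussian is supported — and the leftover powers of $x$ collapse against the prefactor $x^{m-1}$, yielding \eqref{Snxyintro}.

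The estimate for $D_N$ is then easier: differentiating \eqref{prekernel} in $y$ and using $\partial_y\,\mathrm{sgn}(y-v)=2\delta(y-v)$ collapses the $v$-integral to
\[
D_N(x,y)=-\frac{2N^{3m/2}}{(2\sqrt{2\pi})^{m}}\,(x-y)\,w(N^{m/2}x)\,w(N^{m/2}y)\,f_{N-2}(N^m xy) ,
\]
into which I would substitute the same asymptotics (now needing $xy<1-N^{-1/2+\epsilon}$, which holds on $E_N^{1/m}$) and rescale to get \eqref{Dnxyintro}. For $I_N$, integrating \eqref{Snxyintro} over $(x^m,y^m)$ in the first variable with $t=\tau^m$, $dt=m\tau^{m-1}d\tau$, the Gaussian contributes $\int_x^y e^{-\frac{Nm}{2}(\tau-y)^2}\,d\tau$, a (complementary) error function, which combined with the boundary term $\tfrac12\,\mathrm{sgn}(x-y)$ in \eqref{ikernintro} gives \eqref{Inxyintro}. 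The parity relations for negative arguments follow at once from the evenness of $w$ and the change of variables $v\mapsto -v$ in \eqref{prekernel}. Finally, in the mixed-sign case the interval $(y,\infty)$ in the reduced representation above already contains the whole Laplace peak $v\approx x$, so only $\int_{-\infty}^y$ survives, an integral over $v$ on the far side of the origin from the saddle that is exponentially small because $w(N^{m/2}v)$ decays there and $f_{N-2}$ supplies no compensating growth; the same reasoning handles $D_N$ and $I_N$.

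The main obstacle is the uniformity demanded of these input asymptotics: the errors in the expansions of $w$ and — more delicately — of the truncated sum $f_{N-2}$ must be genuinely exponentially small and uniform over the whole of $E_N$ simultaneously. The two hard regions are the comparison $f_{N-2}\leftrightarrow{}_0F_{m-1}$ as $x\to 1^-$ near the spectral edge, and the control of the Laplace remainder near the inner cutoff $|x|\sim N^{-m/2+\epsilon}$, where $w(N^{m/2}x)$ is no longer exponentially small and the saddle $v=x$ migrates toward the boundary of the admissible region. These two effects are exactly what force the definition \eqref{ENsetintro} of $E_N$.
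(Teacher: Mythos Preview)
Your proposal follows essentially the same route as the paper. The reduction of \eqref{prekernel} via moment identities to an integral over $(y,\infty)$ plus an exponentially small boundary term is exactly the paper's representation \eqref{Srep1}; your input asymptotics for $w$ and $f_{N-2}$ are the content of Propositions \ref{prop:gin1}--\ref{prop:gin3}; your closed form for $D_N$ is \eqref{drep2}; and your integration step for $I_N$ parallels the paper's use of \eqref{irep3}. The student's observation that the main difficulty is the uniformity of these expansions on all of $E_N$ is precisely the point of Remark \ref{rem:saddle}.

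Two small differences are worth noting. First, for $S_N$ you exploit the exact antiderivative $\int_y^\infty (x-v)e^{-A(v-x)^2}\,dv=-\tfrac1{2A}e^{-A(y-x)^2}$, which works whether or not the saddle $v=x$ lies in $(y,\infty)$; the paper instead switches representations (\eqref{Srep1} when $x<y$, \eqref{Srep3} when $y<x$) so that the saddle always sits outside the integration range, making the tail estimates more transparent. Your device is slicker but demands slightly more care with the subleading algebraic prefactors when the saddle is interior. Second, for $I_N$ the paper does not integrate directly from $x^m$ to $y^m$ as you propose, but first rewrites $I_N$ via the antisymmetry identity \eqref{irep2}--\eqref{irep3} as $\int_{N^{-m/2+\epsilon'}}^{x}S_N(t,y)\,dt+O(e^{-N^\epsilon})$; this keeps the integration variable strictly below $y$ and avoids integrating through the Gaussian peak at $\tau=y$, where the $(1+o(1))$ control is most delicate.

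Your mixed-sign paragraph is the weakest part: saying ``the interval $(y,\infty)$ already contains the whole Laplace peak'' does not by itself give decay---quite the opposite. The correct mechanism, which you do state earlier (``for $xv<0$ the sum has no exponential growth''), is that for $xv<0$ one has $|f_\infty(N^m xv)|\le e^{N(m-c)|xv|^{1/m}}$ for some $c>0$, so the product $w(N^{m/2}x)w(N^{m/2}v)f_\infty(N^m xv)$ is bounded by $e^{-\delta N(|x|^{2/m}+|v|^{2/m})}$; this is the paper's argument.
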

The most immediate feature of Theorem \ref{th:globintro} is the exponentially fast decay of the kernel outside the diagonal, at least provided $|x-y| \gg N^{-\frac{1}{2}}$. We will see that this decay leads to a certain clustering property of the real eigenvalues and quantifies their weakly dependent structure, for more precise statements see Section \ref{se:clustering}. In general, eigenvalues of random matrices are strongly correlated random variables, so the relatively weak correlations of purely real eigenvalues may not be immediately obvious. This becomes much clearer in light of Theorem \ref{th:globintro}.

Another consequence of Theorem \ref{th:globintro} is the bulk convergence of the correlation kernel. To this end, let us take a point $E \in (-1,1)\setminus\{0\}$ fixed and consider the bulk scaling,
\begin{equation}
s_{N,m}^{(\mathrm{bulk})}(\xi,\zeta) = \frac{1}{2\sqrt{Nm}\rho(E)}\,S_{N}\left(E+\frac{\xi}{2\sqrt{Nm}\rho(E)},E+\frac{\zeta}{2\sqrt{Nm}\rho(E)}\right), \label{scaledKintro}
\end{equation}
where $\rho(E) = \frac{1}{2m}\,|E|^{\frac{1}{m}-1}$ is the limiting density of eigenvalues. We denote by $k^{(\mathrm{bulk})}_{N,m}(\xi,\zeta)$ the $2 \times 2$ matrix kernel in derived form with scalar kernel $s_{N,m}^{(\mathrm{bulk})}(\xi,\zeta)$.

\begin{theorem}
\label{th:bulkconvintro}
We have the convergence to a limiting kernel $k^{(\mathrm{bulk})}_{N,m}(\xi,\zeta) \to k_{\infty}^{(\mathrm{bulk})}(\xi,\zeta)$ uniformly in compact subsets of $\xi$ and $\zeta$, where the limiting $2 \times 2$ matrix kernel $k_{\infty}^{(\mathrm{bulk})}(\xi,\zeta)$ is in derived form with scalar kernel,
\begin{equation}
s^{(\mathrm{bulk})}_{\infty}(\xi,\zeta) = \frac{1}{\sqrt{2\pi}}\,e^{-\frac{1}{2}\,(\xi-\zeta)^{2}}.
\end{equation} 
\end{theorem}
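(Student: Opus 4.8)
The plan is to obtain Theorem~\ref{th:bulkconvintro} as a short corollary of the strong global approximation in Theorem~\ref{th:globintro}: the serious analytic work has already been carried out there, and what remains is the right change of variables together with the bookkeeping of prefactors. By the symmetry $S_N(-x,-y)=S_N(x,y)$ and $\rho(-E)=\rho(E)$, the scaled kernel $s^{(\mathrm{bulk})}_{N,m}$ attached to a point $E<0$ coincides with the one attached to $|E|$ after replacing $(\xi,\zeta)$ by $(-\xi,-\zeta)$, which leaves $\tfrac{1}{\sqrt{2\pi}}e^{-\frac12(\xi-\zeta)^2}$ invariant; so I would assume $E\in(0,1)$ and set $a:=E^{1/m}\in(0,1)$ and $c_N:=2\sqrt{Nm}\,\rho(E)=\sqrt{N}/(\sqrt m\,a^{m-1})$, using $\rho(E)=\tfrac{1}{2m}a^{1-m}$.

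First I would rewrite the arguments of $S_N$ so that Theorem~\ref{th:globintro} applies. Since $E+\xi/c_N=a^m\big(1+\tfrac{\xi\sqrt m}{a\sqrt N}\big)$, taking $m$-th roots gives $E+\xi/c_N=x_N(\xi)^m$ with $x_N(\xi)=a+\tfrac{\xi}{\sqrt{Nm}}+O(N^{-1})$, and likewise $E+\zeta/c_N=y_N(\zeta)^m$ with $y_N(\zeta)=a+\tfrac{\zeta}{\sqrt{Nm}}+O(N^{-1})$, the error terms uniform for $\xi,\zeta$ in a fixed compact. Because $E$ is fixed in $(0,1)\setminus\{0\}$ and these perturbations are $O(N^{-1/2})$, for $N$ large both $x_N(\xi)$ and $y_N(\zeta)$ lie in $E_N^{1/m}$ (the set $E_N$ in \eqref{ENsetintro} only removes an $N^{-m/2+\epsilon}$-neighbourhood of $0$ and an $N^{-1/2+\epsilon}$-neighbourhood of $1$), uniformly on the compact. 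In particular $x_N(\xi)-y_N(\zeta)=\tfrac{\xi-\zeta}{\sqrt{Nm}}+O(N^{-1})$, hence $\tfrac{Nm}{2}(x_N(\xi)-y_N(\zeta))^2=\tfrac12(\xi-\zeta)^2+O(N^{-1/2})$.

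Now I would substitute into the estimate \eqref{Snxyintro}. Writing
\[
s^{(\mathrm{bulk})}_{N,m}(\xi,\zeta)=\frac{1}{c_N}\,S_N\big(x_N(\xi)^m,y_N(\zeta)^m\big)=\frac{1}{c_N\,x_N(\xi)^{m-1}}\Big(x_N(\xi)^{m-1}S_N\big(x_N(\xi)^m,y_N(\zeta)^m\big)\Big),
\]
and noting $c_N\,x_N(\xi)^{m-1}=\sqrt{N/m}\,(1+o(1))$, Theorem~\ref{th:globintro} turns the right-hand side into $\sqrt{m/N}\,(1+o(1))\big[\sqrt{N/(2m\pi)}\,e^{-\frac{Nm}{2}(x_N(\xi)-y_N(\zeta))^2}(1+o(1))+O(e^{-N^\epsilon})\big]$; the prefactor collapses to $\sqrt{\tfrac{m}{N}\cdot\tfrac{N}{2m\pi}}=\tfrac{1}{\sqrt{2\pi}}$, the exponential tends to $e^{-\frac12(\xi-\zeta)^2}$, and $\sqrt{m/N}\,O(e^{-N^\epsilon})=o(1)$, all uniformly on compacts. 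This is the claimed scalar limit $s^{(\mathrm{bulk})}_\infty(\xi,\zeta)=\tfrac{1}{\sqrt{2\pi}}e^{-\frac12(\xi-\zeta)^2}$.

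It remains to upgrade this to convergence of the $2\times2$ matrix kernel in derived form. The $(1,2)$ entry is $s^{(\mathrm{bulk})}_{N,m}$ and the $(2,1)$ entry is $-s^{(\mathrm{bulk})}_{N,m}(\zeta,\xi)$, handled by the same computation. For the $(1,1)$ entry one has $-\partial_\zeta s^{(\mathrm{bulk})}_{N,m}(\xi,\zeta)=c_N^{-2}\,D_N\big(x_N(\xi)^m,y_N(\zeta)^m\big)$, and repeating the above with \eqref{Dnxyintro} (using $c_N^{-2}=m\,a^{2(m-1)}/N$ and $(x_N y_N)^{m-1}=a^{2(m-1)}(1+o(1))$) yields the limit $\tfrac{\zeta-\xi}{\sqrt{2\pi}}e^{-\frac12(\xi-\zeta)^2}=-\partial_\zeta s^{(\mathrm{bulk})}_\infty(\xi,\zeta)$. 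For the $(2,2)$ entry, a change of variables in its defining integral identifies it with $I_N\big(x_N(\xi)^m,y_N(\zeta)^m\big)$, which converges by \eqref{Inxyintro}; alternatively, since $[\xi,\zeta]$ is bounded, the uniform scalar convergence already forces $-\int_\xi^\zeta s^{(\mathrm{bulk})}_{N,m}(t,\zeta)\,dt+\tfrac12\mathrm{sgn}(\xi-\zeta)\to-\int_\xi^\zeta s^{(\mathrm{bulk})}_\infty(t,\zeta)\,dt+\tfrac12\mathrm{sgn}(\xi-\zeta)$, and this Gaussian integral is readily rewritten through $\mathrm{erfc}$ to match \eqref{Inxyintro}. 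The only steps needing care are confirming that the local scaling stays inside $E_N^{1/m}$ uniformly on compacts and the bookkeeping of the powers of $N$, $m$ and $a=E^{1/m}$ that makes the prefactor collapse exactly to $1/\sqrt{2\pi}$; the genuinely hard estimates are all contained in Theorem~\ref{th:globintro}, which I take as given.
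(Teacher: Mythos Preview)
Your proposal is correct and follows essentially the same approach as the paper: both derive the result as a direct corollary of Theorem~\ref{th:globintro} via the limit $\tfrac{Nm}{2}\big((E+\xi/c_N)^{1/m}-(E+\zeta/c_N)^{1/m}\big)^2\to\tfrac12(\xi-\zeta)^2$. You supply more detail than the paper does---in particular the explicit bookkeeping of the prefactors and the treatment of all four entries of the derived-form kernel---but the underlying strategy is identical.
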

The above result was previously obtained in \cite{BS09} for $m=1$, \textit{i.e.} for the real eigenvalues of a single real Ginibre random matrix in the bulk. One might therefore anticipate Theorem \ref{th:bulkconvintro} from the general universality principles of random matrix theory. A similar type of universality for products of complex Ginibre random matrices was obtained in the works \cite{DDZ16,DY16}. The limiting kernel $k_{\infty}^{(\mathrm{bulk})}(\xi,\zeta)$ also arises outside of random matrix theory; it was shown in \cite{TZ11} that the Pfaffian point process with kernel $k_{\infty}^{(\mathrm{bulk})}(\xi,\zeta)$ is equivalent to the point process of annihilating Brownian motions under the maximal entrance law. 

The other interesting scaling limit to consider for such correlation kernels lies in a small neighbourhood of the spectral edge. Without loss of generality we will focus on the right end point of the spectrum located at $x=1$. We define
\begin{align}
s^{(\mathrm{edge})}_{N,m}(\xi,\zeta) &= \frac{1}{2\sqrt{Nm}\rho(1)}\,S_{N}\left(1+\frac{\xi}{2\sqrt{Nm}\rho(1)},1+\frac{\zeta}{2\sqrt{Nm}\rho(1)}\right).
\end{align}
Let $k^{(\mathrm{edge})}_{N,m}(\xi,\zeta)$ be the $2 \times 2$ matrix kernel in derived form with scalar kernel $s_{N,m}^{(\mathrm{edge})}(\xi,\zeta)$.
\begin{theorem}
\label{th:edgeuniformityintro}
For a fixed $s \in \mathbb{R}$ consider the set $A = \{ \xi \in \mathbb{R} : \xi > s\}$. Then we have the convergence $k_{N,m}^{(\mathrm{edge})}(\xi,\zeta) \to k_{\infty}^{(\mathrm{edge})}(\xi,\zeta)$ as $N \to \infty$ uniformly for $(\xi,\zeta) \in A^{2}$, where the limiting $2 \times 2$ matrix kernel $k^{(\mathrm{edge})}_{\infty}(\xi,\zeta)$ is in derived form with scalar kernel,
\begin{equation}
s_{\infty}^{(\mathrm{edge})}(\xi,\zeta) = \frac{1}{2\sqrt{2\pi}}\,e^{-\frac{1}{2}(\xi-\zeta)^{2}}\mathrm{erfc}\left(\frac{\xi+\zeta}{\sqrt{2}}\right)+\frac{1}{4\sqrt{\pi}}\,e^{-\xi^{2}}\mathrm{erfc}(-\zeta).
\end{equation}
\end{theorem}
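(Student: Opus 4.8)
The plan is to first obtain a uniform asymptotic formula for the scalar kernel $S_{N}(x,y)$ when $x,y$ lie within $O(N^{-1/2})$ of the right edge $x=1$, and then to transfer it to the matrix kernel via the derived--form relations \eqref{derivform}, \eqref{dkernintro}, \eqref{ikernintro}. Write $c_{N}:=1/(2\sqrt{Nm}\,\rho(1))=\sqrt{m/N}$ for the edge scale and set $x=1+\xi c_{N}$, $y=1+\zeta c_{N}$ in the integral representation \eqref{prekernel}. Two uniform asymptotic inputs are required, both of a type already needed for Theorem \ref{th:globintro}: (i) a Laplace expansion of the weight \eqref{ginweightdef1}, $w(t)=C_{m}\,t^{1/m-1}e^{-\frac{m}{2}t^{2/m}}(1+o(1))$ for large positive $t$ (the minimiser of $\frac12\sum_{j}\lambda_{j}^{2}$ subject to $\prod_{j}\lambda_{j}=t$ being $\lambda_{1}=\dots=\lambda_{m}=t^{1/m}$); and (ii) a uniform asymptotic for the truncated series \eqref{ginfdef1} --- both are accessible either by Laplace's method applied to \eqref{ginweightdef1}, \eqref{ginfdef1} directly, or, more robustly, via their Mellin--Barnes/Meijer $G$-function representations together with steepest descent. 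The new feature at the edge is the behaviour of (ii): in the bulk one has $xv<1$ strictly, the truncation in \eqref{ginfdef1} is invisible and $f_{N-2}(N^{m}xv)\sim f_{\infty}(N^{m}xv)$ with $f_{\infty}(z)={}_0F_{m-1}(;1,\dots,1;z)$, whereas at the edge $xv=1+O(N^{-1/2})$ and the partial sum differs from the full series by a non-trivial factor. Since the summands $z^{j}/(j!)^{m}$ peak near $j^{\ast}\asymp z^{1/m}$ and are locally Gaussian in $j$ of width $\asymp\sqrt{j^{\ast}/m}$, truncating at $N-2$ gives $f_{N-2}(z)=f_{\infty}(z)\big(\tfrac12\mathrm{erfc}\big(\tfrac{j^{\ast}-N}{\sqrt{2j^{\ast}/m}}\big)+o(1)\big)$; with $z=N^{m}x(1+uc_{N})$ one finds $j^{\ast}-N=(\xi+u)\sqrt{N/m}\,(1+o(1))$, hence the factor $\tfrac12\mathrm{erfc}((\xi+u)/\sqrt{2})$. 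This is the source of the $\mathrm{erfc}$ in $s^{(\mathrm{edge})}_{\infty}$.

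Feeding (i) and (ii) into \eqref{prekernel} and substituting $v=1+uc_{N}$, the $v$-dependent exponent of $w(N^{m/2}v)f_{\infty}(N^{m}xv)$ is maximised at $v=x$, so the integral concentrates at $u=\xi$; expanding $(1+s)^{1/m}=1+s/m+O(s^{2})$, the exponentially large and small factors $e^{\pm cN}$ and $e^{\pm c\xi\sqrt{N}}$ coming from $w(N^{m/2}x)$, $w(N^{m/2}v)$ and $f_{\infty}(N^{m}xv)$ cancel, leaving the Gaussian $e^{-\frac12(\xi-u)^{2}}$ (the $m$-dependence having been absorbed into $c_{N}$), while the prefactor $N^{3m/2}(2\sqrt{2\pi})^{-m}$, the algebraic prefactors of $w$ and $f_{\infty}$, and the Jacobian $c_{N}$ collapse to an $N$-independent constant. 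Recalling that $y$ enters \eqref{prekernel} only through $\mathrm{sgn}(y-v)=\mathrm{sgn}(\zeta-u)$ and that $x-v=(\xi-u)c_{N}$, the leading contribution is $\propto\int_{\mathbb{R}}du\,(\xi-u)\,\mathrm{sgn}(\zeta-u)\,e^{-\frac12(\xi-u)^{2}}\,\mathrm{erfc}\big(\tfrac{\xi+u}{\sqrt{2}}\big)$; writing $(\xi-u)e^{-\frac12(\xi-u)^{2}}=\partial_{u}e^{-\frac12(\xi-u)^{2}}$ and integrating by parts, the term where the derivative hits $\mathrm{erfc}$ (using $(\xi-u)^{2}+(\xi+u)^{2}=2\xi^{2}+2u^{2}$) reproduces $e^{-\frac12(\xi-\zeta)^{2}}\mathrm{erfc}((\xi+\zeta)/\sqrt{2})$, while the term where it hits $\mathrm{sgn}(\zeta-u)$ produces a boundary contribution localised at $u=\zeta$ which, after simplification with standard $\mathrm{erfc}$ identities, gives the $e^{-\xi^{2}}\mathrm{erfc}(-\zeta)$ term; computing the constants then yields $s^{(\mathrm{edge})}_{N,m}(\xi,\zeta)\to s^{(\mathrm{edge})}_{\infty}(\xi,\zeta)$ locally uniformly.

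It remains to promote local uniformity to uniformity on $A^{2}=\{\xi>s\}^{2}$; the only unbounded direction is $\xi,\zeta\to+\infty$, i.e.\ $x,y$ past the spectral edge, where the saddle $v=x$ of the $v$-integral lies above $1$ and $f_{N-2}$ is in its over-truncated regime. Re-running the Laplace analysis there gives $|s^{(\mathrm{edge})}_{N,m}(\xi,\zeta)|\le Ce^{-c(\xi^{2}+\zeta^{2})}$ for all large $N$, matching the Gaussian decay of $s^{(\mathrm{edge})}_{\infty}$; the analogous bounds hold for the $D$- and $I$-entries of the matrix kernel, the first being $-\partial_{\zeta}$ of the scalar kernel (obtained by differentiating the expansion above, legitimate because the $u$-integral depends smoothly on $(\xi,\zeta)$ with uniformly controlled derivatives) and the second being $\tfrac12\mathrm{sgn}(\xi-\zeta)-\int_{\xi}^{\zeta}$ of it (where one passes to the limit by dominated convergence using the tail bound as the majorant). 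Combining these tail estimates with local uniformity gives the uniform convergence $k^{(\mathrm{edge})}_{N,m}\to k^{(\mathrm{edge})}_{\infty}$ on $A^{2}$. I expect the main obstacle to be making the edge analysis of the truncated series \eqref{ginfdef1} rigorous and genuinely uniform: near $x=1$ the saddle of the $v$-integral and the truncation cutoff of $f_{N-2}$ act on the same $O(N^{-1/2})$ scale, so one cannot replace $f_{N-2}$ by either the full series or its last term and must instead carry the incomplete-gamma/error-function factor through the Laplace analysis uniformly --- including in the tail $\xi,\zeta\to+\infty$, which is exactly the strengthening needed for the limiting law of the largest real eigenvalue.
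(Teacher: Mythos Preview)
Your identification of the mechanism producing the $\mathrm{erfc}$ factor (truncation of \eqref{ginfdef1} at the same $O(N^{-1/2})$ scale as the saddle) is correct and matches the paper's Lemma \ref{lem:fnedgelem}. However, there is a genuine gap in your tail argument. You claim that for large $\xi,\zeta$ one has $|s^{(\mathrm{edge})}_{N,m}(\xi,\zeta)|\le Ce^{-c(\xi^{2}+\zeta^{2})}$, ``matching the Gaussian decay of $s^{(\mathrm{edge})}_{\infty}$''. But $s^{(\mathrm{edge})}_{\infty}$ has no such decay: with $\xi$ fixed and $\zeta\to+\infty$ the second term $\tfrac{1}{4\sqrt{\pi}}e^{-\xi^{2}}\mathrm{erfc}(-\zeta)\to \tfrac{1}{2\sqrt{\pi}}e^{-\xi^{2}}\neq 0$. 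The paper flags exactly this point at the start of Section \ref{se:edge}. So your scheme ``local uniformity + symmetric Gaussian tail bound'' cannot close, and the asymmetry $\xi\leftrightarrow\zeta$ of the limit kernel must be confronted.

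The paper handles this by two devices you do not use. First, it works with representation \eqref{Srep1} rather than \eqref{prekernel}; this splits $S_{N}$ into an integral over $[y,\infty)$ (which \emph{does} decay in $\zeta$) plus a term $C_{N,m}D_{N,m}x^{N-1}w(N^{m/2}x)$ depending only on $\xi$, which is precisely the source of the non-decaying $e^{-\xi^{2}}\mathrm{erfc}(-\zeta)$ piece. Second, it conjugates all kernels by $\nu_{N}(\xi)\approx e^{-\xi}$ (see \eqref{nudef}) and proves the stronger Theorem \ref{th:edgeuniformity}: the conjugated kernels $\tilde{s}^{\mathrm{edge}}_{N}=s^{\mathrm{edge}}_{N}/\nu_{N}(\xi)$ converge uniformly on $A^{2}$. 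Since $\nu_{N}$ is uniformly bounded on $A$, this immediately yields Theorem \ref{th:edgeuniformityintro}. The conjugation is what makes the tail estimates go through: on $\xi>N^{\epsilon}$ or $\zeta>N^{\epsilon}$ the \emph{conjugated} kernels are $O(e^{-cN^{2\epsilon}})$, via the monotonicity of $\phi(v)=\tfrac{m}{2}(v^{2/m}-1)-\log v$ and the decomposition \eqref{saddledecomp}. Your integration-by-parts computation essentially rediscovers the splitting of \eqref{Srep1} at the level of the limit, but without carrying that splitting through the finite-$N$ analysis you cannot get uniformity on the unbounded set $A^{2}$.
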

The limit kernel $k_{\infty}^{(\mathrm{edge})}(\xi,\zeta)$ first arose in the $m=1$ real Ginibre case \cite{BS09} (see \cite{BPSTZ16} for a corrected version). As with the bulk regime it also describes systems of annihilating Brownian motions, now with a half-space initial condition \cite{GPTZ18}. The convergence of Theorem \ref{th:edgeuniformityintro} is a relatively strong one as it holds uniformly on sets of unbounded height, even for $m=1$ it is stronger than the previously known convergence of \cite{BS09}. Such a uniformity will be helpful later in obtaining distributional convergence of the largest real eigenvalue. 

Near the origin, in contrast, there are new universality classes which are much more sensitive to the number of factors $m$ in the product. Define the scaled kernel near the origin,
\begin{equation}
s^{(\mathrm{origin})}_{N,m}(\xi,\zeta) = \frac{1}{N^{\frac{m}{2}}}\,S_{N}\left(\frac{\xi}{N^{\frac{m}{2}}},\frac{\zeta}{N^{\frac{m}{2}}}\right).
\end{equation}
As before, we denote by $k^{(\mathrm{origin})}_{N,m}(\xi,\zeta)$ the $2 \times 2$ matrix kernel in derived form with scalar kernel $s^{(\mathrm{origin})}_{N,m}(\xi,\zeta)$.
\begin{theorem}
\label{th:origkern}
We have the convergence to a limiting kernel $k^{(\mathrm{origin})}_{N,m}(\xi,\zeta) \to k^{(\mathrm{origin})}_{\infty,m}(\xi,\zeta)$ uniformly on compact subsets of $\xi$ and $\zeta$, where the limiting $2 \times 2$ matrix kernel $k^{(\mathrm{origin})}_{\infty,m}(\xi,\zeta)$ is in derived form with scalar kernel,
\begin{equation}
s^{(\mathrm{origin})}_{\infty,m}(\xi,\zeta) = \frac{1}{(2\sqrt{2\pi})^{m}}\,\int_{\mathbb{R}}d\eta\,(\xi-\eta)\mathrm{sgn}(\zeta-\eta)w(\xi)w(\eta)f_{\infty}(\xi\zeta),
\end{equation}
where the weight $w$ is defined in \eqref{ginweightdef1} and
\begin{equation}
f_{\infty}(\xi\zeta) := \sum_{k=0}^{\infty}\frac{(\xi\zeta)^{k}}{(k!)^{m}}.
\end{equation}
\end{theorem}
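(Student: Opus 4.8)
The plan is to reduce the claim to a dominated-convergence statement after carrying out the exact rescaling built into the definition of $s^{(\mathrm{origin})}_{N,m}$, starting from the explicit formula \eqref{prekernel} of Theorem \ref{th:ik}. First I would substitute $x=\xi N^{-m/2}$, $y=\zeta N^{-m/2}$ into \eqref{prekernel} and then change the integration variable by $v=\eta N^{-m/2}$. Since $N^{m/2}x=\xi$, $N^{m/2}v=\eta$ and $N^{m}xv=\xi\eta$, every power of $N$ cancels and one obtains the \emph{exact} identity
\[
s^{(\mathrm{origin})}_{N,m}(\xi,\zeta)=\frac{1}{(2\sqrt{2\pi})^{m}}\int_{\mathbb{R}}d\eta\,(\xi-\eta)\,\mathrm{sgn}(\zeta-\eta)\,w(\xi)\,w(\eta)\,f_{N-2}(\xi\eta),
\]
with $f_{N-2}$ as in \eqref{ginfdef1}. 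This is precisely $s^{(\mathrm{origin})}_{\infty,m}$ with the truncated sum $f_{N-2}$ in place of $f_{\infty}(\xi\eta)=\sum_{k\ge0}(\xi\eta)^{k}/(k!)^{m}$, so the theorem (for the scalar kernel) reduces to showing that the above integral converges, locally uniformly in $(\xi,\zeta)$, to the same integral with $f_{N-2}$ replaced by $f_{\infty}$.

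To pass to the limit under the integral sign I would build a uniform integrable majorant by controlling the two factors that can grow. Since $m$ is a positive integer, putting $a_{k}=|\xi\eta|^{k/m}/k!$ and using the norm inequality $\|a\|_{\ell^{m}}\le\|a\|_{\ell^{1}}$ gives the elementary bound $|f_{N-2}(\xi\eta)|\le f_{\infty}(|\xi\eta|)=\sum_{k\ge0}a_{k}^{m}\le\big(\sum_{k\ge0}a_{k}\big)^{m}=e^{m|\xi\eta|^{1/m}}$. For the weight I would carry out a Laplace analysis of the defining integral \eqref{ginweightdef1}: on the surface $\{\lambda_{1}\cdots\lambda_{m}=\eta\}$ the form $\sum_{j}\lambda_{j}^{2}$ is minimised at the diagonal configuration $|\lambda_{1}|=\dots=|\lambda_{m}|=|\eta|^{1/m}$, which gives a decay bound $w(\eta)\le C\,e^{-c|\eta|^{2/m}}$ for $|\eta|\ge 1$ with some $c>0$, while $w\in L^{1}(\mathbb{R})$ takes care of the behaviour near the origin (for $m\ge2$ this is an integrable logarithmic-type singularity). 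Since $2/m>1/m$, once $(\xi,\zeta)$ are confined to a compact set $K\times K$ on which $w$ is bounded — automatic for $m=1$, and for $m\ge2$ one takes $K$ bounded away from $0$, where the kernel otherwise inherits the integrable singularity of $w$ — the function $\eta\mapsto (1+|\eta|)\,w(\eta)\,e^{m R^{1/m}|\eta|^{1/m}}$, with $R=\sup_{x\in K}|x|$, is integrable on $\mathbb{R}$ and dominates the modulus of the integrand uniformly in $(\xi,\zeta)\in K\times K$ and in $N$.

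With the majorant in hand the uniformity is routine: splitting the integral at $|\eta|=T$, the tail over $|\eta|>T$ is bounded by $\int_{|\eta|>T}(1+|\eta|)\,w(\eta)\,e^{mR^{1/m}|\eta|^{1/m}}\,d\eta$, which is made arbitrarily small uniformly in $(\xi,\zeta)\in K\times K$ and $N$ by taking $T$ large, while on $|\eta|\le T$ the difference is at most $\big(\sup_{x\in K}w(x)\big)\|w\|_{L^{1}}(R+T)\sup_{|t|\le RT}|f_{N-2}(t)-f_{\infty}(t)|$, and $f_{N-2}\to f_{\infty}$ uniformly on compacts because $f_{\infty}(t)-f_{N-2}(t)=\sum_{j\ge N-1}t^{j}/(j!)^{m}$. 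This establishes $s^{(\mathrm{origin})}_{N,m}\to s^{(\mathrm{origin})}_{\infty,m}$ locally uniformly, and the remaining entries of the $2\times2$ derived form follow at once: $-\int_{\xi}^{\zeta}s^{(\mathrm{origin})}_{N,m}(t,\zeta)\,dt$ converges by uniform convergence on the compact interval of integration, and differentiating the integral identity in $\zeta$ — only $\mathrm{sgn}(\zeta-\eta)$ depends on $\zeta$, with $\partial_{\zeta}\mathrm{sgn}(\zeta-\eta)=2\delta(\zeta-\eta)$ — gives $-\partial_{\zeta}s^{(\mathrm{origin})}_{N,m}(\xi,\zeta)=-\tfrac{2}{(2\sqrt{2\pi})^{m}}(\xi-\zeta)\,w(\xi)w(\zeta)\,f_{N-2}(\xi\zeta)$, which converges to its $f_{\infty}$ counterpart by the same local uniformity.

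The one genuinely non-routine ingredient is the weight estimate $w(\eta)\le C\,e^{-c|\eta|^{2/m}}$: I expect the main work to be running the Laplace/saddle-point analysis of \eqref{ginweightdef1} with enough uniformity over all $\eta\in\mathbb{R}$ to furnish a legitimate integrable majorant, while simultaneously handling the origin behaviour (harmless for $m=1$, logarithmically singular for $m\ge2$) and confirming that the tail exponent $2/m$ strictly dominates the exponent $1/m$ supplied by $f_{\infty}$. Everything else is a scaling computation combined with standard dominated convergence.
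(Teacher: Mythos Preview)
Your approach is essentially the paper's: perform the exact rescaling to reduce $s^{(\mathrm{origin})}_{N,m}$ to the same integral with $f_{N-2}$ in place of $f_{\infty}$, then pass to the limit using the tail estimate on $f_{\infty}-f_{N-2}$ together with integrability/decay of $w$ (the latter is precisely the crude bound \eqref{crudegin} of Proposition~\ref{prop:gin1}); the $D$- and $I$-entries are handled exactly as you describe.

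One unnecessary restriction: you do not need to keep $K$ bounded away from $0$ when $m\ge2$. The difference of integrands carries the factor
\[
f_{\infty}(\xi\eta)-f_{N-2}(\xi\eta)=\sum_{j\ge N-1}\frac{(\xi\eta)^{j}}{(j!)^{m}},
\]
which contains an explicit $(\xi\eta)^{N-1}$; hence $w(\xi)\bigl(f_{\infty}(\xi\eta)-f_{N-2}(\xi\eta)\bigr)$ is bounded on any compact in $\xi$ (including $\xi=0$), uniformly for $\eta$ in a bounded set, since the power $\xi^{N-1}$ dominates the mild singularity of $w(\xi)$. With this observation your dominated-convergence argument goes through on arbitrary compacts, matching the theorem statement.
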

\begin{remark}
The above finite-$N$ matrix kernels $k^{(\mathrm{bulk})}_{N,m}(\xi,\zeta), k^{(\mathrm{edge})}_{N,m}(\xi,\zeta)$ and $k^{(\mathrm{origin})}_{N,m}(\xi,\zeta)$ are not precisely those obtained by the corresponding rescaling of $K_{N}(x,y)$, as the latter contains slightly different normalisation factors in the diagonal entries of its derived form. However, simple properties of Pfaffians show that these pre-factors cancel out and correspond to the same correlation functions. In the literature such kernels are said to be conjugation equivalent or gauge equivalent.
\end{remark} 
\subsection{Fluctuations of extreme eigenvalues}
\label{se:fluctext}
The previous asymptotics for the correlation kernel can be used to extract convergence in distribution style results for the extreme eigenvalues, both for small eigenvalues near the origin and the maximal eigenvalue at the edge. Before we state these results we briefly review the known case $m=1$ that has been the subject of recent interest. 

The largest real eigenvalue in the GinOE was first described in the work of Rider and Sinclair \cite{RS14} who computed its distribution in terms of certain Fredholm Pfaffians at finite-$N$. Concretely, these are series expansions for the cumulative distribution function of the largest real eigenvalue where each term in the series is built from the GinOE correlation functions. Then passing to the limit $N \to \infty$ with appropriate centering and scaling, one finds the limiting distribution in terms of a certain infinite dimensional operator determinant  (the original expression in \cite{RS14} was later corrected, see Theorem 1.1 of \cite{PTZ17}). The kernel of the limiting operator has a convolution form similar to the kernel that appears in the Tracy-Widom formula for the largest eigenvalue of the GOE \cite{TW94,TW96}.
In particular it belongs to a class studied using probabilistic techniques in \cite{FTZ20, FTZ21} and this gives one route to obtain the tail asymptotics of the largest real eigenvalue. It has been shown that the mentioned determinant can be re-formulated using the theory of integrable systems. In the GOE case the largest eigenvalue is famously related to a solution of the Painlev\'e II equation, whereas in the GinOE case it was very recently shown to be related to a solution of the Zakharov-Shabat system \cite{BB20}. These quantities can be characterised as solutions of an appropriately defined Riemann-Hilbert problem. For the GinOE, applying the non-linear steepest descent method to the Riemann-Hilbert problem allowed the authors of \cite{BB20} to obtain tail asymptotics of the largest real eigenvalue. Very recently, the Zakharov-Shabat system has appeared in the study of the large deviations of the Kardar-Parisi-Zhang equation with weak noise \cite{LDK}. 

Our next result shows that products of GinOE random matrices have largest real eigenvalue belonging to the mentioned Zakharov-Shabat universality class. Again let $G_{1}, \ldots, G_{m}$ be \textit{i.i.d.} copies of real Ginibre matrices of size $N \times N$ and consider the product $G^{(m)} = N^{-\frac{m}{2}}\,G_{1}G_{2}\ldots G_{m}$.
\begin{theorem}
\label{th:lambdamax}
Let $\lambda^{(m)}_{N,\mathrm{max}}$ denote the largest real eigenvalue of the product matrix $G^{(m)}$. Then we have the convergence in distribution,
\begin{equation}
\sqrt{\frac{N}{m}}\,\left(\lambda^{(m)}_{N,\mathrm{max}}-1\right) \overset{d}{\longrightarrow}\lambda_{\mathrm{max}}, \qquad N \to \infty, \label{convreal}
\end{equation}
where the limiting random variable $\lambda_{\mathrm{max}}$ is independent of $m$ and has distribution given by the following absolutely convergent series,
\begin{equation}
\mathbb{P}(\lambda_{\mathrm{max}} < s) = \sum_{\ell=1}^{\infty}\frac{(-1)^{\ell}}{\ell!}\,\int_{[s,\infty)^{\ell}}\prod_{j=1}^{\ell}d\xi_{j}\,\mathrm{Pf}\bigg\{k^{(\mathrm{edge})}_{\infty}(\xi_{i},\xi_{j})\bigg\}_{i,j=1}^{\ell}.
\end{equation}
\end{theorem}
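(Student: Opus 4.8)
The plan is to realise the cumulative distribution function of $\sqrt{N/m}\,(\lambda^{(m)}_{N,\mathrm{max}}-1)$ as a gap probability of the Pfaffian point process of Theorem~\ref{th:ik}, and then pass to the edge scaling limit term by term. Set $c_{N}=2\sqrt{Nm}\,\rho(1)=\sqrt{N/m}$ (using $\rho(1)=\tfrac1{2m}$). For fixed $s\in\mathbb{R}$, the event $\{\sqrt{N/m}\,(\lambda^{(m)}_{N,\mathrm{max}}-1)<s\}$ is exactly the event that $G^{(m)}$ has no real eigenvalue in $(1+s/c_{N},\infty)$. Since $G^{(m)}$ has at most $N$ real eigenvalues almost surely, the correlation functions $\rho^{(k)}_{N}$ of Theorem~\ref{th:ik} vanish identically for $k>N$, so the inclusion--exclusion expansion of this gap probability is a \emph{finite} sum and presents no convergence issue at finite $N$ (this is the finite-$N$ Fredholm Pfaffian formalism used in \cite{RS14}):
\begin{equation}
\mathbb{P}\!\left(\sqrt{\tfrac{N}{m}}\,(\lambda^{(m)}_{N,\mathrm{max}}-1)<s\right)=\sum_{\ell=0}^{N}\frac{(-1)^{\ell}}{\ell!}\int_{(1+s/c_{N},\infty)^{\ell}}\mathrm{Pf}\big\{K_{N}(x_{i},x_{j})\big\}_{i,j=1}^{\ell}\,\prod_{j=1}^{\ell}dx_{j}.
\end{equation}

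Next I would change variables by $x_{j}=1+\xi_{j}/c_{N}$. The $\ell$ Jacobian factors $c_{N}^{-1}$ can be absorbed into the $2\times 2$ blocks of $K_{N}$, and by the conjugation (gauge) equivalence of Pfaffian kernels recorded in the Remark after Theorem~\ref{th:origkern}, the integrand becomes precisely $\mathrm{Pf}\{k^{(\mathrm{edge})}_{N,m}(\xi_{i},\xi_{j})\}_{i,j=1}^{\ell}$, the Pfaffian of the rescaled derived-form edge kernel. Hence the probability above equals $\sum_{\ell=0}^{N}\tfrac{(-1)^{\ell}}{\ell!}\int_{A^{\ell}}\mathrm{Pf}\{k^{(\mathrm{edge})}_{N,m}(\xi_{i},\xi_{j})\}_{i,j=1}^{\ell}\prod_{j}d\xi_{j}$ with $A=(s,\infty)$. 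By Theorem~\ref{th:edgeuniformityintro}, $k^{(\mathrm{edge})}_{N,m}\to k^{(\mathrm{edge})}_{\infty}$ uniformly on $A^{2}$, so for each fixed $\ell$ the integrand converges uniformly on $A^{\ell}$ to $\mathrm{Pf}\{k^{(\mathrm{edge})}_{\infty}(\xi_{i},\xi_{j})\}_{i,j=1}^{\ell}$; it then remains to interchange $N\to\infty$ with the integrals over the unbounded domain $A^{\ell}$ and with the summation over $\ell$.

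For this I would combine Hadamard's inequality for Pfaffians, $|\mathrm{Pf}(M)|=\sqrt{\det M}\le\big(\prod_{r}\|M_{r}\|\big)^{1/2}$ (product of Euclidean norms of the rows of the $2\ell\times2\ell$ matrix $M$), with uniform-in-$N$ decay bounds for the entries of $k^{(\mathrm{edge})}_{N,m}$ on $A^{2}$. Modelled on the structure of $s^{(\mathrm{edge})}_{\infty}$, the entries built from $s^{(\mathrm{edge})}_{N,m}$ and its $\zeta$-derivative should decay like $e^{-c\xi^{2}}$ in their first argument, uniformly in the second argument over $(s,\infty)$ (the factors $\mathrm{erfc}((\xi+\zeta)/\sqrt2)$ and $e^{-\xi^{2}}$ supply this decay away from the corner $\xi\approx\zeta\approx s$), while the remaining entries stay bounded; only the $\tfrac12\,\mathrm{sgn}$ contribution to the bottom-right entry fails to decay, but it occupies a single component of each $2\times2$ block. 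Pairing the fully decaying row of block $i$ with the merely $O(\sqrt\ell)$ row of the same block in Hadamard's bound then yields an estimate $|\mathrm{Pf}\{k^{(\mathrm{edge})}_{N,m}(\xi_{i},\xi_{j})\}_{i,j=1}^{\ell}|\le C^{\ell}\ell^{\ell/2}\prod_{j=1}^{\ell}g(\xi_{j})$ with $g\in L^{1}(s,\infty)$ and $C,g$ independent of $N,\ell$. Since $\sum_{\ell}\tfrac{C^{\ell}\ell^{\ell/2}}{\ell!}\|g\|_{L^{1}}^{\ell}<\infty$ (because $\ell^{\ell/2}/\ell!\le(e/\sqrt\ell)^{\ell}$), dominated convergence passes the limit through each integral and then through the sum, so the probability converges to $\sum_{\ell\ge0}\tfrac{(-1)^{\ell}}{\ell!}\int_{(s,\infty)^{\ell}}\mathrm{Pf}\{k^{(\mathrm{edge})}_{\infty}(\xi_{i},\xi_{j})\}_{i,j=1}^{\ell}\prod_{j}d\xi_{j}$, the absolutely convergent series of the statement (its $\ell=0$ term being $1$). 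The limit is independent of $m$ because $c_{N}=\sqrt{N/m}$ has absorbed all $m$-dependence, and the exceptional event $\{N_{\mathbb{R}}=0\}$ (of probability $o(1)$, where $\lambda^{(m)}_{N,\mathrm{max}}$ is set to $-\infty$) does not affect the upper tail; this gives \eqref{convreal}.

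I expect the main obstacle to be the uniform dominating bound, rather than the Pfaffian bookkeeping above. Theorem~\ref{th:edgeuniformityintro} as stated gives convergence of $k^{(\mathrm{edge})}_{N,m}$ on the unbounded set $A^{2}$, but to dominate the Fredholm Pfaffian series one needs \emph{quantitative} Gaussian-type decay of the finite-$N$ kernel entries that is uniform in $N$ across all of $A^{2}$ -- not just on compacts, which is all that is required for the bulk and origin limits. Extracting these uniform tail estimates from the edge asymptotics underlying Theorem~\ref{th:edgeuniformityintro}, in particular controlling $S_{N}$, its derivative, and the integral term $I_{N}$ near $x=1$ all the way out to $+\infty$, is the technical heart of the argument; the remaining steps are the standard Pfaffian point-process manipulations indicated above.
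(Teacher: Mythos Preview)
Your overall strategy---express the distribution function as a finite-$N$ Fredholm Pfaffian, rescale to the edge, and pass to the limit term by term with a dominated convergence argument---matches the paper's. The difference lies in how the dominating bound is obtained.

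You propose to bound the Pfaffian directly via Hadamard's inequality, using Gaussian decay of the entries of $k^{(\mathrm{edge})}_{N,m}$ in their first argument. This can be made to work, but (as you anticipate) it requires a careful case analysis of each entry, including the non-decaying $\tfrac12\,\mathrm{sgn}$ piece and the $\mathrm{erfc}(-\zeta)$ factor in $s^{(\mathrm{edge})}_{\infty}$, and one must extract these bounds uniformly in $N$ from the edge asymptotics rather than just from the limiting kernel.

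The paper instead uses a \emph{conjugation trick}: it multiplies the kernel by an explicit weight $\nu_{N}(\xi)\approx e^{-\xi}$, defining $\tilde{K}^{(\mathrm{edge})}_{N}$ so that $\mathrm{Pf}\{k^{(\mathrm{edge})}_{N,m}\}=\prod_{j}\nu_{N}(\xi_{j})\cdot\mathrm{Pf}\{\tilde{K}^{(\mathrm{edge})}_{N}\}$. The stronger form of Theorem~\ref{th:edgeuniformityintro} actually proved (Theorem~4.1) shows that the \emph{conjugated} kernel $\tilde{K}^{(\mathrm{edge})}_{N}$ converges uniformly on $A^{2}$ and is in particular uniformly bounded; all the decay needed for integrability is then carried by the explicit factor $\prod_{j}\nu_{N}(\xi_{j})$, whose $L^{1}$ norm is finite. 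The Pfaffian difference is controlled via $|\mathrm{Pf}\,M-\mathrm{Pf}\,L|\le|\det M-\det L|^{1/2}$ and the determinant perturbation Lemma~\ref{AGZ_lemma}, yielding a bound of the form $\|\nu_{N}\|_{1}^{\ell}\cdot C^{\ell}(2\ell)^{(\ell+1)/2}/\ell!$.

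The conjugation approach buys you a clean separation of ``decay'' (in $\nu_{N}$) from ``convergence'' (of the bounded $\tilde{K}^{(\mathrm{edge})}_{N}$), so you never have to prove entrywise Gaussian tails for the finite-$N$ kernel---only uniform boundedness after conjugation, which is what the edge asymptotics naturally deliver. Your Hadamard route is viable but would require you to carry out the quantitative tail analysis you flag as the main obstacle; the paper sidesteps it.
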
 
Since the limiting random variable of Theorem \ref{th:lambdamax} is independent of $m$, it coincides with the $m=1$ characterisations given in \cite{BB20}. The notable $m$ dependence lies in the normalisation of \eqref{convreal} which suggests that fluctuations of the largest real eigenvalue become larger for increasing $m$.

On the other hand, the smallest eigenvalues have limiting distributions with a more complicated dependence on the number of factors $m$ in the product. Let $\lambda^{(m)}_{\mathrm{min},N}$ denote the smallest positive real eigenvalue of the product matrix $G^{(m)}$. It is worth noting that this random variable is not obviously related to the singular values of $G^{(m)}$, as the latter would be influenced also by neighbouring complex eigenvalues. After blowing up by a factor $N^{\frac{m}{2}}$, the random variable $\lambda^{(m)}_{\mathrm{min},N}$ has a limiting distribution.
\begin{theorem}
\label{th:smalleigintro}
We have the convergence in distribution
\begin{equation}
N^{\frac{m}{2}}\lambda^{(m)}_{\mathrm{min},N} \overset{d}{\longrightarrow} \lambda^{(m)}_{\mathrm{min}}, \qquad N \to \infty,
\end{equation}
where the limiting random variable has distribution given by the following absolutely convergent series,
\begin{equation}
\mathbb{P}(\lambda^{(m)}_{\mathrm{min}} > s) = \sum_{\ell=1}^{\infty}\frac{(-1)^{\ell}}{\ell!}\,\int_{[-s,s]^{\ell}}\prod_{j=1}^{\ell}d\xi_{j}\,\mathrm{Pf}\bigg\{k^{(\mathrm{origin})}_{\infty,m}(\xi_{i},\xi_{j})\bigg\}_{i,j=1}^{\ell}. \label{mindist}
\end{equation}
\end{theorem}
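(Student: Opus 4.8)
The plan is to identify $\mathbb{P}(N^{\frac{m}{2}}\lambda^{(m)}_{\mathrm{min},N}>s)$ with a hole probability of the Pfaffian point process of Theorem \ref{th:ik} and then to pass to the scaling limit of Theorem \ref{th:origkern}. By definition $\{N^{\frac{m}{2}}\lambda^{(m)}_{\mathrm{min},N}>s\}$ is exactly the event that $G^{(m)}$ has no real eigenvalue in $I_N:=[-sN^{-\frac{m}{2}},sN^{-\frac{m}{2}}]$. Writing $Z_N$ for the number of real eigenvalues of $G^{(m)}$ in $I_N$ (so that $Z_N\le N$ almost surely), the inclusion-exclusion (factorial-moment) identity for $\mathbb{P}(Z_N=0)$ together with the Pfaffian form $\rho^{(\ell)}_N(x_1,\ldots,x_\ell)=\mathrm{Pf}\{K_N(x_i,x_j)\}_{i,j=1}^\ell$ of the correlation functions gives the finite sum
\[
\mathbb{P}\big(N^{\frac{m}{2}}\lambda^{(m)}_{\mathrm{min},N}>s\big)=\sum_{\ell=0}^{N}\frac{(-1)^{\ell}}{\ell!}\int_{I_N^{\ell}}\prod_{j=1}^{\ell}dx_j\,\mathrm{Pf}\big\{K_N(x_i,x_j)\big\}_{i,j=1}^{\ell}.
\]
Substituting $x_j=\xi_jN^{-\frac{m}{2}}$ and using the gauge-equivalence recorded in the Remark after Theorem \ref{th:origkern} (the Jacobian of the rescaling, together with the diagonal normalisation prefactors, being absorbed inside the Pfaffian), the $\ell$-th term becomes $\frac{(-1)^\ell}{\ell!}\int_{[-s,s]^\ell}\prod_j d\xi_j\,\mathrm{Pf}\{k^{(\mathrm{origin})}_{N,m}(\xi_i,\xi_j)\}_{i,j=1}^\ell$.

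By Theorem \ref{th:origkern}, $k^{(\mathrm{origin})}_{N,m}\to k^{(\mathrm{origin})}_{\infty,m}$ uniformly on the compact square $[-s,s]^2$, so for each fixed $\ell$ the integrand converges uniformly and the $\ell$-th term converges to the corresponding term on the right-hand side of \eqref{mindist}. To interchange $\lim_{N\to\infty}$ with the sum over $\ell$ I need a dominating bound uniform in $N$. Uniform convergence on $[-s,s]^2$ to a continuous kernel yields $M(s):=\sup_N\sup_{[-s,s]^2}\max_{i,j}|[k^{(\mathrm{origin})}_{N,m}]_{ij}|<\infty$, and a Hadamard-type inequality for Pfaffians, $|\mathrm{Pf}\{K(x_i,x_j)\}_{i,j=1}^\ell|\le(2\ell)^{\ell/2}M^\ell$ whenever the scalar entries of the $2\times2$ kernel $K$ are bounded by $M$, then bounds the $\ell$-th term by $(2s)^\ell(2\ell)^{\ell/2}M(s)^\ell/\ell!$; this is summable by Stirling's formula and is independent of $N$. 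Dominated convergence gives $\mathbb{P}(N^{\frac{m}{2}}\lambda^{(m)}_{\mathrm{min},N}>s)\to F(s)$ for every $s>0$, where $F(s)$ is the series \eqref{mindist}, with $\ell=0$ term equal to $1$; equivalently $F(s)$ is the probability that the limiting origin point process (the weak limit, with correlation functions $\mathrm{Pf}\{k^{(\mathrm{origin})}_{\infty,m}\}$) has no points in $[-s,s]$.

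It remains to check that $F$ is the complementary distribution function of a genuine $[0,\infty)$-valued random variable, so that pointwise convergence at every $s>0$ is convergence in distribution. Monotonicity and right-continuity of $F$ are inherited from the finite-$N$ distribution functions, and $F(0^+)=1$ because the $\ell\ge1$ terms vanish as $s\downarrow0$. The delicate point is tightness at $+\infty$, that is $F(s)\to0$ as $s\to\infty$, which rules out escape of mass to infinity. For this I apply a second-moment (Paley-Zygmund) bound to the point count $Z_\infty(s)$ of the limiting origin process in $[-s,s]$: $F(s)=\mathbb{P}(Z_\infty(s)=0)\le1-\mathbb{E}[Z_\infty(s)]^2/\mathbb{E}[Z_\infty(s)^2]$. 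Here $\mathbb{E}[Z_\infty(s)]=\int_{-s}^s\rho_1$ with $\rho_1(\xi)=s^{(\mathrm{origin})}_{\infty,m}(\xi,\xi)$, and since the origin-scaling one-point function matches onto the global density one has $\rho_1(\xi)\sim c\,|\xi|^{\frac1m-1}$ as $|\xi|\to\infty$, so $\mathbb{E}[Z_\infty(s)]\to\infty$; meanwhile $\mathbb{E}[Z_\infty(s)^2]=\int_{-s}^s\rho_1+\int_{[-s,s]^2}\rho_2$, and the decay of the off-diagonal kernel makes $\rho_2(\xi,\eta)$ asymptotically factorise, so the correlation defect $\int_{[-s,s]^2}(\rho_1(\xi)\rho_1(\eta)-\rho_2(\xi,\eta))\,d\xi\,d\eta$ is only of order $\int_{-s}^s\rho_1$. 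Hence $\mathbb{E}[Z_\infty(s)^2]\sim\mathbb{E}[Z_\infty(s)]^2$ and $F(s)\to0$.

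The main obstacle is the dominated-convergence step for the Fredholm-Pfaffian series: one must control $k^{(\mathrm{origin})}_{N,m}$ on $[-s,s]^2$ uniformly in $N$ well enough to feed the Hadamard-Pfaffian inequality, and one must establish the tightness estimate at infinity. In contrast to the largest eigenvalue (Theorem \ref{th:lambdamax}), where the corresponding gap sits on the \emph{unbounded} set $\{\xi>s\}$ and one genuinely needs the convergence of Theorem \ref{th:edgeuniformityintro} to hold there, here $I_N$ is bounded, so the uniformity of Theorem \ref{th:origkern} on compact sets already suffices; running the same scheme with Theorem \ref{th:edgeuniformityintro} in place of Theorem \ref{th:origkern} yields Theorem \ref{th:lambdamax}.
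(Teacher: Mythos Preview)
Your approach is essentially the same as the paper's: express the event as a gap probability, expand it as a finite Fredholm--Pfaffian series, rescale, and pass to the limit using the uniform kernel convergence of Theorem~\ref{th:origkern} together with a summable Hadamard-type bound on the Pfaffians. The paper handles the interchange of limit and sum by bounding the \emph{difference} of Pfaffians via Lemma~\ref{AGZ_lemma} (following the template of the proof of Theorem~\ref{th:lambdamax}) rather than by dominated convergence on each term, but this is a cosmetic distinction. Two minor remarks. First, the paper's own proof in the Appendix integrates over $[0,s]^\ell$ rather than $[-s,s]^\ell$, consistent with $\lambda^{(m)}_{\mathrm{min},N}$ being the smallest \emph{positive} real eigenvalue; your identification of the event with ``no real eigenvalue in $[-sN^{-m/2},sN^{-m/2}]$'' is only correct under a smallest-in-modulus interpretation, so the statement in the introduction appears to contain a typo. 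Second, your tightness check $F(s)\to0$ via a Paley--Zygmund inequality is a point of rigor the paper simply omits, taking for granted that the limiting series defines a genuine distribution; your sketch would need the large-$|\xi|$ asymptotics of the origin kernel (matching onto the bulk density) to be made precise, but the idea is sound.
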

In comparison with the largest real eigenvalue, it would be interesting to understand whether the limiting distributions \eqref{mindist} can be explored using probabilistic \cite{FTZ20,FTZ21} or integrable approaches \cite{BB20}, for example to obtain characterisations of \eqref{mindist} in terms of differential equations, or to extract the tail asymptotics as $s \to \infty$. 

\subsection{Beyond real Ginibre random matrices}
\label{se:beyond}
The methods developed in this paper likely apply to various other models of real asymmetric random matrices. One particular example is a product of truncated Haar distributed orthogonal random matrices, which has been the subject of recent investigations \cite{FK18, FIK20, LMS21}. As in the case of real Ginibre random matrices, eigenvalue statistics of such products are again described by a Pfaffian point process. This was shown in the single matrix case in the work \cite{KSZ10}, then for products of such matrices beginning with \cite{IK14} and simplified expressions for the kernel directly analogous to the one of Theorem \ref{th:ik} obtained in \cite{FIK20}. Then it is possible to obtain analogues of the main results discussed here in Sections \ref{se:bulkfluct}, \ref{se:kernasympt} and \ref{se:fluctext} for such products employing a similar approach developed in this paper. The main difference in the statement of results is that the limiting density $\rho(x) = \frac{1}{2m}\,|x|^{-1+\frac{1}{m}}\mathbbm{1}_{x \in (-1,1)}$ that appeared earlier should be replaced with the density
\begin{equation}
\rho(x) = \frac{1}{2m}\,|x|^{-1+\frac{1}{m}}\,\frac{1}{1-|x|^{\frac{2}{m}}}\mathbbm{1}_{x \in (-\tilde{\alpha}^{\frac{m}{2}}, \tilde{\alpha}^{\frac{m}{2}})}, \label{rhoT}
\end{equation}
where $\tilde{\alpha} := \lim_{N \to \infty}\left(1+\frac{L_{N}}{N}\right)^{-1}$; here  $L_{N}$ is the number of rows and columns truncated, and $N$ is the dimension of the truncated matrix. In particular, this replacement gives the appropriate analogue of Theorem \ref{th:ginconv}, conjectured in the case $f(x) \equiv 1$ in \cite{FIK20}.

Another model well suited to the methods described in this article is known as the real elliptic ensemble. This model describes an interpolation between GOE and GinOE random matrices and is constructed as follows. Let $S$ and $A$ be $N \times N$ random matrices sampled from the GOE and anti-symmetric GOE respectively.
Then $X^{(\tau)} = \sqrt{\frac{1+\tau}{2N}} S + \sqrt{\frac{1-\tau}{2N}} A$ is an $N \times N$ random matrix sampled from the real elliptic ensemble with interpolation parameter 
$\tau \in [0, 1]$. The Pfaffian structure of the eigenvalue point process and explicit correlation kernel in derived form was obtained in \cite{FN08}. The work of \cite{FSK98} on the complex elliptic ensemble also provides an
expression for the scalar kernel $S_{N}(x,y)$ in the real elliptic ensemble in terms of incomplete Gamma functions that is convenient for asymptotics; and \cite{AKV16} obtain asymptotics of such kernels with uniform error bounds. Combining these results allows one to obtain a global approximation of the type given in Theorem \ref{th:globintro} for this ensemble. We believe that following a similar strategy outlined in the present paper would extend those results to a CLT similar to Theorem \ref{th:ginconv} with $0 < \tau < 1$ fixed (called the strongly non-Hermitian regime). Estimates on the expectation and variance for the number of real eigenvalues in the real elliptic ensemble were recently investigated in the weakly non-Hermitian regime \cite{BKLL21}.\\

This paper is structured as follows. In order to keep the paper self-contained and of appropriate length we will focus on products of real Ginibre random matrices. We begin in Section \ref{se:prelims} by obtaining some alternative representations for the kernel $K_{N}(x,y)$ of Theorem \ref{th:ik} and use these to give the proof of Theorem \ref{th:globintro} and its consequence, Theorem \ref{th:bulkconvintro}. Based on Theorem \ref{th:globintro}, in Section \ref{se:variance} we obtain asymptotic formulas for the variance of linear statistics of the real eigenvalues. Then we develop the theory of weak dependence as it applies to real Ginibre matrices and their products in Sections \ref{se:cumulants} and \ref{se:clustering}. This is used to complete the proof of Theorems \ref{th:ginconv} and \ref{th:ginconvmeso} in Section \ref{se:cumulantbound}, including a CLT for the case $E=0$ of \eqref{mesolinstat}. Section \ref{se:edge} studies the edge statistics and contains the proofs of Theorems \ref{th:edgeuniformityintro} and \ref{th:lambdamax}. The Appendix contains the proof of miscellaneous results, including the proof of Theorems \ref{th:origkern} and \ref{th:smalleigintro}.

\section*{Acknowledgements}
Both authors gratefully acknowledge financial support of the Royal Society, grants URF\textbackslash R1\textbackslash 180707 and RGF\textbackslash EA\textbackslash 181085.

\section{Preliminaries and proof of the global approximation}
\label{se:prelims}
The goal of this Section is to prove Theorem \ref{th:globintro}. In order to extract the asymptotic behaviour of the kernel $K_{N}(x,y)$ defined in Theorem \ref{th:ik} we begin by obtaining some alternative expressions for the various scalar kernels comprising $K_{N}(x,y)$, namely the functions $S_{N}(x,y)$, $D_{N}(x,y)$ and $I_{N}(x,y)$ mentioned in Theorem \ref{th:globintro} and \eqref{dkernintro}, \eqref{ikernintro}. These alternative forms turn out to be better suited for performing asymptotic analysis; the reasons for this will be discussed later in Remark \ref{rem:saddle}. Then we will need asymptotic results for the main quantities appearing in the integrand of \eqref{prekernel}, namely the weight \eqref{ginweightdef1} and the sum \eqref{ginfdef1}. Bringing these results together will then allow us to complete the proof of Theorem \ref{th:globintro}.
\begin{lemma}
\label{lem:kernrep}
Define the pre-factors,
\begin{equation}
\begin{split}
C_{N,m} &= \frac{N^{\frac{3m}{2}}}{(2\sqrt{2\pi})^{m}},\label{cnmdef}\\
D_{N,m} &= N^{\frac{m(N-3)}{2}}2^{m\left(\frac{N-1}{2}\right)}\left(\frac{\Gamma\left(\frac{N-1}{2}\right)}{(N-2)!}\right)^{m}.
\end{split}
\end{equation}
Then we have the following equivalent representations for the scalar kernel $S_{N}(x,y)$ in \eqref{prekernel},
\begin{equation}
\begin{split}
S_{N}(x,y) = &-2C_{N,m}\int_{y}^{\infty}dv\,(x-v)w(N^{\frac{m}{2}}x)w(N^{\frac{m}{2}}v)f_{N-2}(N^{m}xv)\\
&+C_{N,m}D_{N,m}x^{N-1}w(N^{\frac{m}{2}}x),\label{Srep1}
\end{split}
\end{equation}
\begin{equation}
\begin{split}
S_{N}(x,y) = &2C_{N,m}\int_{-\infty}^{y}dv\,(x-v)w(N^{\frac{m}{2}}x)w(N^{\frac{m}{2}}v)f_{N-2}(N^{m}xv)\\
&-C_{N,m}D_{N,m}x^{N-1}w(N^{\frac{m}{2}}x), \label{Srep2}
\end{split}
\end{equation}
and
\begin{equation}
\begin{split}
S_{N}(x,y) = &2C_{N,m}\int_{0}^{y}dv\,(x-v)w(N^{\frac{m}{2}}x)w(N^{\frac{m}{2}}v)f_{N-2}(N^{m}xv)\\
&-C_{N,m}\left(\frac{2}{N}\right)^{m}w(N^{\frac{m}{2}}x).\label{Srep3}
\end{split}
\end{equation}
\end{lemma}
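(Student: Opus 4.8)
\textbf{Proof proposal for Lemma \ref{lem:kernrep}.}

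The plan is to reduce all three identities to two explicit integral evaluations together with some bookkeeping of constants. Since the factor $w(N^{m/2}x)$ is common to every term of \eqref{prekernel}, \eqref{Srep1}, \eqref{Srep2} and \eqref{Srep3}, I would first divide through by $C_{N,m}w(N^{m/2}x)$ and work with the scalar function $g(v) = (x-v)\,w(N^{m/2}v)\,f_{N-2}(N^{m}xv)$, so that $S_{N}(x,y) = C_{N,m}\,w(N^{m/2}x)\int_{\mathbb{R}}g(v)\,\mathrm{sgn}(y-v)\,dv$. Writing $\int_{\mathbb{R}}g(v)\,\mathrm{sgn}(y-v)\,dv = \int_{-\infty}^{y}g - \int_{y}^{\infty}g$, the three representations correspond exactly to the three elementary rewritings $\int_{-\infty}^{y}g - \int_{y}^{\infty}g = \int_{\mathbb{R}}g - 2\int_{y}^{\infty}g = 2\int_{-\infty}^{y}g - \int_{\mathbb{R}}g = 2\int_{0}^{y}g + \big(\int_{-\infty}^{0}g - \int_{0}^{\infty}g\big)$. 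So everything comes down to computing the total mass $\int_{\mathbb{R}}g(v)\,dv$ (which should equal $D_{N,m}\,x^{N-1}$) and the odd part $\int_{-\infty}^{0}g(v)\,dv - \int_{0}^{\infty}g(v)\,dv$ (which should equal a constant multiple of $(2/N)^{m}$): the first evaluation yields \eqref{Srep1} and \eqref{Srep2} at once, and the two together yield \eqref{Srep3}.

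Both integrals I would compute by expanding $f_{N-2}$ into monomials via \eqref{ginfdef1}, interchanging the finite sum with the integral, and using moments of the weight $w$. From \eqref{ginweightdef1} one reads off immediately that $w$ is even (flip the sign of a single $\lambda_j$) and that $\int_{\mathbb{R}}u^{k}w(u)\,du = \big(\int_{\mathbb{R}}\lambda^{k}e^{-\lambda^{2}/2}\,d\lambda\big)^{m}$, which vanishes for $k$ odd and equals $(2\pi)^{m/2}\big((2l-1)!!\big)^{m}$ for $k=2l$; similarly $\int_{0}^{\infty}u^{k}w(u)\,du = \tfrac12\big(\int_{\mathbb{R}}|\lambda|^{k}e^{-\lambda^{2}/2}\,d\lambda\big)^{m} = \tfrac12\big(2^{(k+1)/2}\Gamma(\tfrac{k+1}{2})\big)^{m}$. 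Writing $g(v) = w(N^{m/2}v)\sum_{j}c_{j}(xv^{j}-v^{j+1})$ with $c_j = (N^{m}x)^{j}/(j!)^{m}$, one sees that in $\int_{\mathbb{R}}g$ only the even-degree monomials of $f_{N-2}$ survive through the $xv^{j}$ term and only the odd-degree ones through the $v^{j+1}$ term; after a shift of summation index the two resulting sums are identical except for one endpoint, so subtracting them leaves a single monomial, of top degree $N-1$ in $x$. In the odd-part integral the surviving half-line moments give two sums whose summands coincide for all indices $\ge 1$ by an elementary factorial identity (essentially $2^{t}t!/(2t)! = 2^{t-1}(t-1)!/(2t-1)!$), so they cancel term by term down to the single lowest-order term, which is a pure constant proportional to $(2/N)^{m}$.

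The final step is to identify the surviving constants with the stated normalisations. For $\int_{\mathbb{R}}g = D_{N,m}x^{N-1}$ I would collect the powers of $N$ (arising from $N^{m}$ and $N^{m/2}$), the powers of $2$, and the factorial from the top moment, and convert the half-integer Gamma value using $\Gamma(n+\tfrac12) = (2n)!\,\sqrt{\pi}/(4^{n}n!)$ (a consequence of Legendre's duplication formula): this is precisely where the standing hypothesis that $N$ is even enters, since it makes $\tfrac{N-1}{2}$ a half-integer and turns $\Gamma(\tfrac{N-1}{2})$ into the ratio of factorials appearing in \eqref{cnmdef}, reproducing $D_{N,m}$ on the nose. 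The odd part then collapses to $\pm(2/N)^{m}$, and comparison with \eqref{Srep2} fixes \eqref{Srep3} including the sign of its $(2/N)^{m}$ term. As a consistency check I would verify the base case $m=1$, $N=2$, where $w(u)=e^{-u^{2}/2}$ and $f_{0}\equiv 1$, so that all three formulas and \eqref{prekernel} itself reduce to Gaussian integrals computable in closed form.

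There is essentially no analysis to do here: every integral is a finite linear combination of absolutely convergent Gaussian-type moments, and the interchange of sum and integral is trivially justified. The whole proof is therefore combinatorial bookkeeping, and that is where the only genuine difficulty lies: tracking which monomials survive after the parity shift produced by the factor $(x-v)$, choosing the reindexing that exposes the near-cancellation of the two sums, handling signs correctly in the $\mathrm{sgn}(y-v)$ decomposition and in the odd half-line moments, and assembling the tower of constants (powers of $N$, powers of $2$, factorials, and Gamma values) so that it matches $D_{N,m}$ and $(2/N)^{m}$ exactly.
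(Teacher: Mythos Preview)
Your proposal is correct and follows essentially the same route as the paper: compute the moments of $w$ explicitly, expand $f_{N-2}$ term by term, and observe that the two resulting sums telescope so that only a single boundary term survives (the top-degree term for $\int_{\mathbb{R}}g$, giving $D_{N,m}x^{N-1}$, and the bottom term for the sign-weighted integral, giving $(2/N)^m$); the three representations then drop out of the obvious rearrangements of $\int_{-\infty}^{y}-\int_{y}^{\infty}$. The paper organises the even/odd split of $f_{N-2}$ slightly differently and omits the explicit constant-matching via the duplication formula, but the content is the same.
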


\begin{proof}
From the definition of the weight \eqref{ginweightdef1} it is straightforward to check that,
\begin{equation}
\int_{0}^{\infty}dv\,v^{k}w(N^{\frac{m}{2}}v) = \frac{1}{2}\,\left(\frac{2}{N}\right)^{\frac{m(k+1)}{2}}\left(\Gamma\left(\frac{k+1}{2}\right)\right)^{m}.\label{eqmomweight}
\end{equation}
Then by inserting the definition of $f_{N-2}$ from \eqref{ginfdef1} and integrating term by term we obtain the pair of exact identities:
\begin{align}
&\int_{-\infty}^{\infty}dv\,\mathrm{sgn}(v)\,(v-x)w(N^{\frac{m}{2}}v)f_{N-2}(N^{m}xv) = \left(\frac{2}{N}\right)^{m},\label{intid1}\\
&\int_{-\infty}^{\infty}dv\,(x-v)w(N^{\frac{m}{2}}v)f_{N-2}(N^{m}xv) = D_{N,m}x^{N-1}. \label{intid2}
\end{align}
To see the first identity \eqref{intid1} note that
\begin{align}
&\int_{0}^{\infty}dv\,(v-x)w(N^{\frac{m}{2}}v)f_{N-2}(N^{m}xv)-\int_{-\infty}^{0}dv\,(v-x)w(N^{\frac{m}{2}}v)f_{N-2}(N^{m}xv)\\
&=2\int_{0}^{\infty}dv\,vw(N^{\frac{m}{2}}v)f^{(e)}_{N-2}(N^{m}xv)-2\int_{0}^{\infty}dv\,xw(N^{\frac{m}{2}}v)f^{(o)}_{N-2}(N^{m}xv) \label{fefo}
\end{align}
where $f_{N-2}^{(e)}$ and $f_{N-2}^{(o)}$ denote the even and odd monomial contributions to $f_{N-2}$,
\begin{equation}
 f^{(e)}_{N-2}(v) = \sum_{j=0}^{\frac{N}{2}-1}\frac{v^{2j}}{((2j)!)^{m}}, \qquad
 f^{(o)}_{N-2}(v) = \sum_{j=0}^{\frac{N}{2}-2}\frac{v^{2j+1}}{((2j+1)!)^{m}}.
\end{equation}
Explicitly computing the integrals in \eqref{fefo} using \eqref{eqmomweight} results in a cancellation between adjacent terms in the sum, so that only the $j=0$ term from $f^{(e)}_{N-2}$ gives a non-zero contribution. Again by \eqref{eqmomweight}  this gives $(2/N)^{m}$. The proof of \eqref{intid2} is similar, except terms with the same index cancel and one is left with only the $j=\frac{N}{2}-1$ term in $f^{(e)}_{N-2}$.

Now starting from \eqref{prekernel} and rearranging the integration limits gives
\begin{equation}
\begin{split}
&S_{N}(x,y) = C_{N,m}\int_{-\infty}^{\infty}dv\,(x-v)w(N^{\frac{m}{2}}v)w(N^{\frac{m}{2}}x)f_{N-2}(N^{m}xv)\\
& - 2C_{N,m}\int_{y}^{\infty}dv\,(x-v)w(N^{\frac{m}{2}}v)w(N^{\frac{m}{2}}x)f_{N-2}(N^{m}xv). \label{trivrewrite}\\
\end{split}
\end{equation}
Inserting \eqref{intid2} into \eqref{trivrewrite} completes the proof of representation \eqref{Srep1}. 
We obtain \eqref{Srep2} in a similar manner. Finally, to obtain \eqref{Srep3} we insert \eqref{intid1} into
\begin{equation}
\begin{split}
S_{N}(x,y) &= 2C_{N,m}\int_{0}^{y}dv\, (x-v)w(N^{\frac{m}{2}}v)w(N^{\frac{m}{2}}x)f_{N-2}(N^{m}xv)\\
&-C_{N,m}\int_{-\infty}^{\infty}dv\,\mathrm{sgn}(v)(x-v)w(N^{\frac{m}{2}}v)w(N^{\frac{m}{2}}x)f_{N-2}(N^{m}xv).
\end{split}
\end{equation}
\end{proof}
Next we consider the kernels $D_{N}(x,y)$ and $I_{N}(x,y)$ defined in \eqref{dkernintro} and \eqref{ikernintro}. 
\begin{lemma}
\label{lem:dandi}
The kernels $D_{N}(x,y)$ and $I_{N}(x,y)$ are anti-symmetric functions of $x$ and $y$. Furthermore, the kernel $D_{N}(x,y)$ is given by
\begin{equation}
D_{N}(x,y) = 2C_{N,m}(x-y)w(N^{\frac{m}{2}}x)w(N^{\frac{m}{2}}y)f_{N-2}(N^{m}xy). \label{drep2}
\end{equation}
When $y>x$ the kernel $I_{N}(x,y)$ has the alternative expression,
\begin{equation}
I_{N}(x,y) = \int_{0}^{x}dt\,S_{N}(t,y)+C_{N,m}\left(\frac{2}{N}\right)^{m}\int_{0}^{y}dt\,w(N^{\frac{m}{2}}t)-\frac{1}{2}. \label{irep2}
\end{equation}
\end{lemma}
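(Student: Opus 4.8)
The plan is to establish the closed form \eqref{drep2} for $D_{N}$ first, from which its anti-symmetry is immediate, and then to deduce the alternative expression \eqref{irep2} for $I_{N}$ and the anti-symmetry of $I_{N}$ from the representation \eqref{Srep3} together with a single elementary parity observation. That observation is: because $f_{N-2}(N^{m}tv)$ depends on $t$ and $v$ only through their product $tv$, the map $(t,v)\mapsto(t-v)\,w(N^{\frac{m}{2}}t)\,w(N^{\frac{m}{2}}v)\,f_{N-2}(N^{m}tv)$ is anti-symmetric under the exchange $t\leftrightarrow v$, so its integral over any region symmetric under $t\leftrightarrow v$ vanishes; in particular $\int_{a}^{b}\int_{a}^{b}(t-v)\,w(N^{\frac{m}{2}}t)\,w(N^{\frac{m}{2}}v)\,f_{N-2}(N^{m}tv)\,dt\,dv=0$ for all $a<b$. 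Everything else in the lemma is a rearrangement of limits of integration and bookkeeping with $\mathrm{sgn}$.

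For $D_{N}$, I would differentiate the representation \eqref{Srep1} in $y$: the term $C_{N,m}D_{N,m}x^{N-1}w(N^{\frac{m}{2}}x)$ does not involve $y$, while in the remaining term $y$ appears only as the lower endpoint of $\int_{y}^{\infty}$, so $\tfrac{\partial}{\partial y}S_{N}(x,y)$ is nothing but (up to sign) the value of the integrand at $v=y$. Combined with $D_{N}=-\tfrac{\partial}{\partial y}S_{N}$ this yields \eqref{drep2}. (One could instead differentiate \eqref{prekernel} directly via $\tfrac{\partial}{\partial y}\mathrm{sgn}(y-v)=2\delta(y-v)$, but working from \eqref{Srep1}, where $y$ is a genuine integration endpoint, avoids any distributional manipulation.) The anti-symmetry $D_{N}(y,x)=-D_{N}(x,y)$ is then read off \eqref{drep2}, since the factor $w(N^{\frac{m}{2}}x)\,w(N^{\frac{m}{2}}y)\,f_{N-2}(N^{m}xy)$ is symmetric in $x,y$ while the linear prefactor changes sign.

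For $I_{N}$, suppose $y>x$, so that $\mathrm{sgn}(x-y)=-1$ and \eqref{ikernintro} reads $I_{N}(x,y)=-\int_{x}^{y}dt\,S_{N}(t,y)-\tfrac{1}{2}$. Writing $\int_{x}^{y}=\int_{0}^{y}-\int_{0}^{x}$ reduces matters to evaluating $\int_{0}^{y}dt\,S_{N}(t,y)$. Inserting \eqref{Srep3}, this equals $2C_{N,m}\int_{0}^{y}\int_{0}^{y}(t-v)\,w(N^{\frac{m}{2}}t)\,w(N^{\frac{m}{2}}v)\,f_{N-2}(N^{m}tv)\,dt\,dv-C_{N,m}(2/N)^{m}\int_{0}^{y}dt\,w(N^{\frac{m}{2}}t)$, and the double integral is $0$ by the parity observation above (Fubini being justified by the moment identity \eqref{eqmomweight}, which makes the integrand absolutely integrable on the bounded square). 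Hence $\int_{0}^{y}dt\,S_{N}(t,y)=-C_{N,m}(2/N)^{m}\int_{0}^{y}dt\,w(N^{\frac{m}{2}}t)$, and substituting back gives \eqref{irep2}. The anti-symmetry of $I_{N}$ is then cleanest straight from \eqref{ikernintro}: in $I_{N}(x,y)+I_{N}(y,x)$ the two $\mathrm{sgn}$ contributions cancel, leaving $\int_{x}^{y}dt\,[S_{N}(t,x)-S_{N}(t,y)]$, which by \eqref{Srep3} equals $-2C_{N,m}\int_{x}^{y}\int_{x}^{y}(t-v)\,w(N^{\frac{m}{2}}t)\,w(N^{\frac{m}{2}}v)\,f_{N-2}(N^{m}tv)\,dt\,dv=0$, again by parity.

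I do not anticipate a genuine obstacle: the lemma is a rearrangement, not a new estimate. The two points that require a little care are, first, producing the boundary term for $D_{N}$ cleanly, which is why I would differentiate \eqref{Srep1} rather than the distributional $\mathrm{sgn}$ in \eqref{prekernel}; and second, the appeals to Fubini in the $I_{N}$ computations, which are justified by \eqref{eqmomweight}. Beyond that the only remaining risk is clerical, namely keeping all signs and limits of integration consistent.
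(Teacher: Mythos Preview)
Your proposal is correct and follows essentially the same route as the paper: differentiate \eqref{Srep1} at the endpoint to obtain \eqref{drep2}, split $\int_x^y=\int_0^y-\int_0^x$ and use \eqref{Srep3} together with the $(t,v)$ anti-symmetry to derive \eqref{irep2}, and use that same parity observation on the square $[x,y]^2$ for the anti-symmetry of $I_N$. The only cosmetic difference is that the paper computes $S_N(t,y)-S_N(t,x)$ from \eqref{Srep1} rather than \eqref{Srep3}, which is immaterial since the non-integral terms in both representations depend only on the first argument and cancel in the difference.
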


\begin{proof}
The anti-symmetry of $D_{N}$ and $I_{N}$ is implicit in the construction of \cite{FI16}, but we give a direct proof here. The representation \eqref{drep2} follows by inserting \eqref{Srep1} into \eqref{dkernintro}. This kernel is clearly anti-symmetric. To see the anti-symmetry of $I_{N}(x,y)$ it suffices to check anti-symmetry of the first term in \eqref{ikernintro}. To that end, by \eqref{Srep1} we have
\begin{equation}
S_{N}(t,y)-S_{N}(t,x) = -2C_{N,m}\int_{x}^{y}dv\,(v-t)w(N^{\frac{m}{2}}v)w(N^{\frac{m}{2}}t)f_{N-2}(N^{m}vt)
\end{equation}
and thus by symmetry in the $v$ and $t$ variables,
\begin{equation}
\int_{x}^{y}dt\,(S_{N}(t,y)-S_{N}(t,x)) = 0,
\end{equation}
which implies the anti-symmetry of the first term in \eqref{ikernintro}. Hence $I_{N}(x,y) = -I_{N}(y,x)$. To establish \eqref{irep2}, using \eqref{ikernintro} we decompose,
\begin{equation}
\begin{split}
I_{N}(x,y) &= \int_{0}^{x}dt\,S_{N}(t,y)-\int_{0}^{y}dt\,S_{N}(t,y)-\frac{1}{2}\\
&= \int_{0}^{x}dt\,S_{N}(t,y)+C_{N,m}\left(\frac{2}{N}\right)^{m}\int_{0}^{y}dt\,w(N^{\frac{m}{2}}t)-\frac{1}{2} \label{symm0y}
\end{split}
\end{equation}
where to obtain the second line \eqref{symm0y} we used \eqref{Srep3} and again used the symmetry of the integrand in $v$ and $t$.
\end{proof}
We pause here to recollect the standard asymptotic results for the weights \eqref{ginweightdef1} and the sum \eqref{ginfdef1}. These quantities all have convenient integral representations, and the following three propositions follow from applying the Laplace method of asymptotics to those integral representations. We do not give the details of the proof here, but instead refer the author to Appendix A of \cite{LMS21} where the main ideas are discussed. These asymptotics were also considered in the context of complex Ginibre random matrices in \cite{AB12}.
\begin{remark}
Throughout the paper $C$ and $c$ will always denote absolute positive constants that are uniform in the sense that they are independent of any relevant asymptotic parameters, function arguments or integration variables. Their precise value will be considered unimportant and may change from line to line.
\end{remark}
\begin{proposition}
\label{prop:gin1}
Fix a large constant $M>0$. Then we have the following asymptotic estimate uniformly on $|x| \in [MN^{-\frac{m}{2}},\infty)$
\begin{equation}
w(N^{m/2}x) = N^{-\frac{m-1}{2}}\,e^{-\frac{Nm}{2}x^{\frac{2}{m}}}\frac{(4\pi)^{\frac{m-1}{2}}}{\sqrt{m}}|x|^{-\frac{m-1}{m}}\left(1+O\left(\frac{1}{N|x|^{\frac{2}{m}}}\right)\right), \qquad N \to \infty. \label{wresult}
\end{equation}
Furthermore, we have the crude bound uniformly on $[MN^{-\frac{m}{2}},\infty)$,
\begin{equation}
\label{crudegin}
w(N^{m/2}x) \leq C\,e^{-\frac{Nm}{2}x^{\frac{2}{m}}}.
\end{equation}
\end{proposition}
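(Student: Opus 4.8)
The plan is to turn the definition \eqref{ginweightdef1} into a convenient $(m-1)$-dimensional integral with a large parameter and then apply the Laplace method. First reduce to $x>0$: the substitution $\lambda_m\mapsto-\lambda_m$ in \eqref{ginweightdef1} gives $w(-X)=w(X)$, so it suffices to prove the estimate with $x>0$ and then replace $x$ by $|x|$. For $X>0$, use the $\delta$-function in \eqref{ginweightdef1} to integrate out $\lambda_m=X/(\lambda_1\cdots\lambda_{m-1})$ (Jacobian $|\lambda_1\cdots\lambda_{m-1}|^{-1}$) and sum over the $2^{m-1}$ sign patterns of $(\lambda_1,\dots,\lambda_{m-1})$ to obtain
\begin{equation*}
w(X)=2^{m-1}\int_{(0,\infty)^{m-1}}\frac{d\lambda_1\cdots d\lambda_{m-1}}{\lambda_1\cdots\lambda_{m-1}}\,\exp\!\left(-\tfrac12\sum_{j=1}^{m-1}\lambda_j^2-\frac{X^2}{2\,\lambda_1^2\cdots\lambda_{m-1}^2}\right).
\end{equation*}
Setting $X=N^{m/2}x$ and rescaling $\lambda_j=\sqrt{Nx^{2/m}}\,u_j$ (the $Nx^{2/m}$ factors in the measure, in $\sum\lambda_j^2$, and in $X^2/\prod\lambda_j^2$ all balance) turns this into
\begin{equation*}
w(N^{m/2}x)=2^{m-1}\int_{(0,\infty)^{m-1}}\frac{du}{u_1\cdots u_{m-1}}\,e^{-\frac{t}{2}\Phi(u)},\qquad \Phi(u)=\sum_{j=1}^{m-1}u_j^2+\frac{1}{u_1^2\cdots u_{m-1}^2},
\end{equation*}
with $t=Nx^{2/m}$; the hypothesis $|x|\ge MN^{-m/2}$ forces $t\ge t_0:=M^{2/m}$, so $t$ is a large parameter uniformly over the stated range.

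Next I would do the Laplace analysis of $\Phi$. Any critical point satisfies $u_i^2=(\prod_ku_k)^{-2}$ for all $i$, forcing all coordinates equal, and then $u_\ast=(1,\dots,1)$ is the unique critical point, a non-degenerate minimum with $\Phi(u_\ast)=m$, while $\Phi\to\infty$ towards the boundary of the orthant and at infinity; this yields the Gaussian factor $e^{-Nmx^{2/m}/2}$. A direct differentiation gives $\mathrm{Hess}\,\Phi(u_\ast)=4(I_{m-1}+J_{m-1})$ with $J_{m-1}$ the all-ones matrix, hence eigenvalues $4m$ (once) and $4$ ($m-2$ times), so $\det\mathrm{Hess}\,\Phi(u_\ast)=4^{m-1}m$. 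Since the amplitude $1/(u_1\cdots u_{m-1})$ equals $1$ at $u_\ast$, the standard multivariate Laplace formula gives
\begin{equation*}
w(N^{m/2}x)=2^{m-1}\,e^{-\frac{tm}{2}}\left(\frac{2\pi}{t}\right)^{\frac{m-1}{2}}\Big(\det\big(\tfrac12\mathrm{Hess}\,\Phi(u_\ast)\big)\Big)^{-\frac12}\big(1+O(1/t)\big),
\end{equation*}
and collecting constants via $2^{m-1}(2\pi)^{(m-1)/2}(2^{m-1}m)^{-1/2}=(4\pi)^{(m-1)/2}/\sqrt m$ together with $t^{-(m-1)/2}=N^{-(m-1)/2}x^{-(m-1)/m}$ reproduces \eqref{wresult}. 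For the crude bound \eqref{crudegin}, write $\Phi=m+(\Phi-m)$ with $\Phi-m\ge0$, so that for $t\ge t_0$ one has $e^{-\frac t2\Phi(u)}\le e^{-\frac{tm}{2}}e^{-\frac{t_0}{2}(\Phi(u)-m)}$; the remaining integral $\int(u_1\cdots u_{m-1})^{-1}e^{-\frac{t_0}{2}(\Phi(u)-m)}\,du$ is a finite constant (convergent at the boundary since $\Phi\to\infty$ there dominates the $\prod u_j^{-1}$ singularity, and at infinity by the $\sum u_j^2$ growth), giving \eqref{crudegin}.

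The main obstacle is the rigorous, \emph{uniform} justification of the Laplace expansion with the claimed relative error $O(1/(N|x|^{2/m}))=O(1/t)$ for all $t\ge t_0$, rather than merely $t\to\infty$. Two points require care: (i) showing that the part of the $u$-integral outside a fixed neighbourhood of $u_\ast$ contributes $O(e^{-ct})$ uniformly, which near the boundary of the orthant uses that $e^{-t/(2\prod u_j^2)}$ decays faster than $\prod u_j^{-1}$ grows, and for large $u_j$ uses the quadratic growth of $\Phi$; and (ii) inside the neighbourhood, Taylor-expanding $\Phi$ to second order and the amplitude to first order and bounding the remainder uniformly in $t\in[t_0,\infty)$, which is what pins down the precise constant $(4\pi)^{(m-1)/2}/\sqrt m$ and the order of the error. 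An alternative, closer to the treatment of the complex case in \cite{AB12}, is to start from the Mellin--Barnes representation $w(X)=\tfrac12\cdot\tfrac1{2\pi i}\int_{(c)}\big(2^{s/2}\Gamma(s/2)\big)^m X^{-s}\,ds$ and apply the saddle-point method with saddle $s_\ast\sim Nx^{2/m}$; via Stirling's formula this gives the same statement but with more bookkeeping. The details of either argument proceed as in Appendix A of \cite{LMS21}.
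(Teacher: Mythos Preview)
Your proposal is correct and is precisely the approach the paper has in mind: the paper does not spell out a proof but defers to Appendix~A of \cite{LMS21} (and \cite{AB12}), where the Laplace method is applied to an integral representation of $w$. Your reduction to the $(m-1)$-dimensional integral, the identification of the large parameter $t=Nx^{2/m}$, the critical point $u_\ast=(1,\dots,1)$ with $\Phi(u_\ast)=m$, the Hessian computation $\det=4^{m-1}m$, and the resulting constant $(4\pi)^{(m-1)/2}/\sqrt m$ are all correct, as is the argument for the crude bound via $\Phi\ge m$ and the finiteness of the residual integral. You have also correctly flagged the only substantive technical point, namely the uniformity of the $O(1/t)$ remainder for all $t\ge t_0$ rather than merely as $t\to\infty$; this is exactly what the cited references establish.
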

\begin{proposition}
\label{prop:gin2}
As $N \to \infty$ we have the following estimate uniformly on $x \in \mathbb{R}\setminus((1-\omega)^{m},(1+\omega)^{m})$,
\begin{equation}
f_{N-2}(N^{m}x) = f_{\infty}(N^{m}x)\mathbbm{1}_{-(1+\omega)^{m} < x < (1-\omega)^{m}} + \frac{x^{N-1}e^{mN}}{(2\pi N)^{m/2}(x-1)}\left(1+O\left(\frac{1}{\sqrt{N}\omega}\right)\right), \label{fintest}
\end{equation}
where we take $\omega = N^{-\frac{1}{2}}$.
\end{proposition}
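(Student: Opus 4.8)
The plan is to estimate the partial sum $f_{N-2}$ by comparing it with the entire function $f_\infty(z)=\sum_{j\ge 0}z^{j}/(j!)^{m}$. Write $f_{N-2}(z)=f_\infty(z)-R_N(z)$ with $R_N(z):=\sum_{j\ge N-1}z^{j}/(j!)^{m}$, shift the index by $j=N-1+k$, and use $(N-1+k)!=\Gamma(N)\,(N)_k$ with $(N)_k:=N(N+1)\cdots(N+k-1)$ the Pochhammer symbol, to obtain the exact identity
\begin{equation*}
R_N(N^{m}x)=\frac{(N^{m}x)^{N-1}}{(\Gamma(N))^{m}}\sum_{k=0}^{\infty}\frac{(N^{m}x)^{k}}{((N)_k)^{m}}.
\end{equation*}
Stirling's formula gives $\frac{(N^{m}x)^{N-1}}{(\Gamma(N))^{m}}=\frac{x^{N-1}e^{mN}}{(2\pi N)^{m/2}}\,(1+O(1/N))$ uniformly in $x$, so everything reduces to the behaviour of the last sum, a hypergeometric-type series of the form ${}_1F_m$.

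On the region where the indicator in \eqref{fintest} equals one, that is $-(1+\omega)^{m}<x<(1-\omega)^{m}$, I would compare this series termwise with the geometric series $\sum_k x^{k}=(1-x)^{-1}$. Writing $(N)_k=N^{k}\gamma_k$ with $\gamma_k:=\prod_{i=1}^{k-1}(1+i/N)\in[1,e^{k^{2}/(2N)}]$, the summand equals $x^{k}\gamma_k^{-m}$; for $0\le x<(1-\omega)^{m}\le 1-\omega$ the factor $x^{k}$ cuts the sum off at $k\sim\omega^{-1}$, which is exactly the scale on which $\gamma_k^{-m}$ starts to deviate from $1$, and balancing these two effects gives $\sum_k x^{k}\gamma_k^{-m}=(1-x)^{-1}(1+O((\sqrt N\,\omega)^{-1}))$, while for $-(1+\omega)^{m}<x<0$ the series alternates and a summation by parts yields the same estimate, the cancellation compensating for the possible loss of absolute convergence when $|x|>1$. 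Combining with the Stirling prefactor and using $-(1-x)^{-1}=(x-1)^{-1}$ turns $f_{N-2}=f_\infty-R_N$ into \eqref{fintest} on this region.

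On the complementary region $|x|\ge(1+\omega)^{m}$, where the indicator vanishes, the splitting $f_\infty-R_N$ is unusable because $f_\infty(N^{m}x)$ and $R_N(N^{m}x)$ are each exponentially larger than their difference $f_{N-2}(N^{m}x)$. Instead I would sum $f_{N-2}(N^{m}x)$ from the top: since $|x|^{1/m}>1$ its terms increase in modulus up to $j=N-2$, and writing the sum backwards with $\big((N-2)!/(N-2-k)!\big)^{m}(N^{m}x)^{-k}=x^{-k}\prod_{l=2}^{k+1}(1-l/N)^{m}$ one again recognises a geometric-type series, now of ratio $x^{-1}$, which sums to $\tfrac{x}{x-1}(1+O((\sqrt N\,\omega)^{-1}))$; together with $\frac{(N^{m}x)^{N-2}}{((N-2)!)^{m}}=\frac{x^{N-2}e^{mN}}{(2\pi N)^{m/2}}(1+O(1/N))$ this reproduces \eqref{fintest}. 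Alternatively, and essentially the route of Appendix~A of \cite{LMS21} and of \cite{AB12}, both regions can be handled at once by Laplace's method applied to an integral representation of $f_{N-2}$, obtained by writing $1/(j!)^{m-1}$ as an $(m-1)$-fold contour integral of $\prod_i e^{w_i}w_i^{-j-1}$ and performing the $j$-summation as a finite geometric series. In either approach the main obstacle is the uniformity as $x\to 1^{-}$ (and, on the negative side, as $x\to -1^{+}$): there the effective number of terms in the geometric series is of order $\omega^{-1}\asymp\sqrt N$, precisely the threshold beyond which the factorial and Pochhammer correction factors cease to be $1+o(1)$ termwise, so that the two truncation scales must be tracked against one another to produce the stated $O((\sqrt N\,\omega)^{-1})$ error; away from $x=\pm 1$ the estimates are routine and the relative error is in fact $O(1/N)$.
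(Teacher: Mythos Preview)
The paper does not actually prove this proposition; it states that the result ``follow[s] from applying the Laplace method of asymptotics to those integral representations'' and refers to Appendix~A of \cite{LMS21} and to \cite{AB12}. So there is no in-paper argument to compare against beyond that one-line description.

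Your route is genuinely different from the one the paper points to. The referenced approach writes $1/(j!)^{m}$ as an $m$-fold contour integral, sums the resulting geometric series in $j$, and then runs a multivariable saddle-point/Laplace analysis (this is exactly what the paper does later in Lemma~\ref{lem:fnedgelem} for the edge regime). Your approach instead stays at the level of the series: Stirling on the leading factorial, then a direct comparison of the remaining sum $\sum_k x^{k}\gamma_k^{-m}$ (or its reversed version in $x^{-1}$) with a geometric series. This is more elementary and makes the mechanism---the competition between the geometric cutoff at $k\sim |1-x|^{-1}$ and the Pochhammer correction at $k\sim\sqrt N$---completely transparent; the Laplace-method route hides this competition inside the saddle-point analysis but in exchange treats all sign/size cases of $x$ uniformly without separate bookkeeping. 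Your final paragraph already notes the equivalence and correctly isolates the one genuinely delicate point (uniformity as $x\to 1^{\mp}$).

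One spot that deserves a sentence more of care is the strip $-(1+\omega)^{m}<x<-1$, where the indicator is $1$ but $|x|>1$: here the terms $x^{k}\gamma_k^{-m}$ of your tail series are alternating with moduli that first grow (up to $k$ of order $\sqrt N$) before the Gaussian factor $\gamma_k^{-m}\approx e^{-mk^{2}/(2N)}$ takes over. Your ``summation by parts'' remark is the right idea---pairing consecutive terms, or equivalently Abel summation against $\gamma_k^{-m}$, converts the oscillation into a convergent sum whose leading part is again $(1-x)^{-1}$---but it is worth spelling out, since a naive absolute-value bound fails there. Once that is done your argument is complete.
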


\begin{proposition}
\label{prop:gin3}
Fix a large constant $M>0$. Then we have the following estimate uniformly on $x \in (MN^{-m},\infty)$,
\begin{equation}
f_{\infty}(N^{m}x) = (2\pi)^{-(m-1)/2}x^{-\frac{m-1}{2m}}e^{Nmx^{\frac{1}{m}}}N^{-\frac{m-1}{2}}\,\frac{1}{\sqrt{m}}\left(1+O\left(\frac{1}{x^{\frac{1}{m}}N}\right)\right), \qquad N \to \infty. \label{asyfint}
\end{equation}
We have the crude bound, for $x > MN^{-m}$, 
\begin{equation}
f_{\infty}(N^{m}x) \leq C\,e^{Nmx^{\frac{1}{m}}}. \label{crudefn}
\end{equation}
\end{proposition}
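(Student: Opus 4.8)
The plan is to derive an $(m-1)$-fold contour integral representation for $f_\infty$ and then run the multidimensional Laplace (saddle-point) method on it, exactly as one does for the weight $w$ in Proposition~\ref{prop:gin1}. (When $m=1$ the statement is trivial since $f_\infty(z)=e^z$, so assume $m\ge 2$.) First I would record the identity
\[
f_\infty(z)=\frac{1}{(2\pi i)^{m-1}}\oint\cdots\oint \exp\!\Big(t_1+\cdots+t_{m-1}+\frac{z}{t_1\cdots t_{m-1}}\Big)\,\frac{dt_1\cdots dt_{m-1}}{t_1\cdots t_{m-1}},
\]
where each $t_j$ runs counterclockwise around a circle about the origin (the integrand is entire in each $t_j$ away from $t_j=0$, so the radii are irrelevant for now). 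This follows by expanding every exponential into its power series, interchanging sum and integral (legitimate by absolute convergence on the fixed contours), and using $\frac{1}{2\pi i}\oint t^{a-1}\,dt=\mathbbm{1}_{a=0}$: the $t_j$-integral forces the monomial exponent $a_j$ coming from $\exp(t_j)$ to equal the exponent $b$ coming from $z^b/(t_1\cdots t_{m-1})^b$, leaving precisely $\sum_{b\ge 0}z^b/(b!)^m$.

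Next, take the $t_j$-contours to be circles of radius $z^{1/m}$ and substitute $t_j=z^{1/m}s_j$, writing $\lambda=z^{1/m}$ (which is large whenever $z=N^m x>M$ with $M$ large), so that
\[
f_\infty(z)=\frac{1}{(2\pi i)^{m-1}}\oint\cdots\oint e^{\lambda\Phi(s)}\,\frac{ds_1\cdots ds_{m-1}}{s_1\cdots s_{m-1}},\qquad \Phi(s)=\sum_{j=1}^{m-1}s_j+\frac{1}{s_1\cdots s_{m-1}},
\]
with each contour now the unit circle. The equations $\partial_{s_j}\Phi=0$ read $s_j\prod_i s_i=1$ for all $j$, which forces the unique relevant saddle $s_\ast=(1,\dots,1)$, with $\Phi(s_\ast)=m$ and Hessian $H=I_{m-1}+J_{m-1}$ ($J$ the all-ones matrix), so $\det H=m$. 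Parametrising the unit torus by $s_j=e^{i\theta_j}$ one has the elementary bound $\operatorname{Re}\Phi=\sum_j\cos\theta_j+\cos\!\big(\sum_j\theta_j\big)\le m$ with equality only at $\theta=0$, so the unit torus is a genuine steepest-descent contour with a unique nondegenerate maximum of $\operatorname{Re}\Phi$ at $s_\ast$; moreover $\Phi(e^{i\theta})=m-\tfrac12\theta^{\mathsf T}H\theta+O(|\theta|^{3})$, where the cubic remainder is purely imaginary and odd in $\theta$. On the neighbourhood $\{|\theta|<\delta\}$ the Jacobian $\prod_j(i e^{i\theta_j}d\theta_j)$ cancels the factor $1/\prod_j s_j=e^{-i\sum\theta_j}$ exactly, and the contribution of $\{|\theta|\ge\delta\}$ is $O(e^{-c\lambda})$.

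A routine Gaussian computation then gives $f_\infty(z)=\dfrac{e^{m\lambda}}{(2\pi\lambda)^{(m-1)/2}\sqrt m}\big(1+O(\lambda^{-1})\big)$ as $\lambda\to\infty$, uniformly for $\lambda$ bounded below (the saddle $s_\ast$, the value $\Phi(s_\ast)=m$ and the Hessian are all independent of $\lambda$, so the implied constant depends only on $m$ and the lower bound). Substituting back $z=N^m x$, hence $\lambda=z^{1/m}=Nx^{1/m}$, turns this into exactly \eqref{asyfint} with error $O\big((Nx^{1/m})^{-1}\big)$, uniformly on $x\in(MN^{-m},\infty)$. For the crude bound \eqref{crudefn}, note that the prefactor equals $(2\pi)^{-(m-1)/2}m^{-1/2}\lambda^{-(m-1)/2}e^{m\lambda}$ and that $x>MN^{-m}$ gives $\lambda=Nx^{1/m}>M^{1/m}$, so $\lambda^{-(m-1)/2}\le M^{-(m-1)/(2m)}\le 1$ for $M\ge 1$; taking $M$ large enough that the $O(\lambda^{-1})$ error stays below $1$ yields $f_\infty(N^m x)\le C\,e^{Nm x^{1/m}}$.

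The main obstacle is making the saddle-point step rigorous with an error that is \emph{uniform} in $\lambda$ all the way down to a (large) constant rather than only as $\lambda\to\infty$: one must bound the Taylor remainder of $\Phi$ on the unit torus and the tail of the $\theta$-integral with constants depending only on $m$ and $M$. The global steepest-descent inequality $\sum_j\cos\theta_j+\cos(\sum_j\theta_j)\le m$ together with the exact cancellation of the Jacobian against $1/\prod_j s_j$ are what make this tractable; everything else is the standard multidimensional Laplace expansion, which I would either carry out directly or import from a reference such as Appendix~A of \cite{LMS21}.
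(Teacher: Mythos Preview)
Your proposal is correct and follows exactly the approach the paper indicates: the paper does not give a detailed proof but states that the result follows by applying the Laplace method to an integral representation of $f_\infty$, referring to Appendix~A of \cite{LMS21} and \cite{AB12} for the details. Your $(m-1)$-fold contour integral representation and the subsequent saddle-point analysis on the unit torus (with saddle $s_\ast=(1,\dots,1)$, value $\Phi(s_\ast)=m$, Hessian $I_{m-1}+J_{m-1}$ of determinant $m$) is precisely this program carried out explicitly, and your treatment of the uniformity in $\lambda=Nx^{1/m}\ge M^{1/m}$ and the deduction of the crude bound are both sound.
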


Using these asymptotics we can further simplify the representation \eqref{irep2}.
\begin{lemma}
Let $0 < x < y$. Then for any $\epsilon, \epsilon'$ such that $0 < \epsilon' < \epsilon$ we have uniformly on $\{x > N^{-\frac{m}{2}+\epsilon}\}$, 
\begin{equation}
I_{N}(x,y) = \int_{N^{-\frac{m}{2}+\epsilon'}}^{x}dt\,S_{N}(t,y) + O(e^{-N^{\epsilon}}), \qquad N \to \infty. \label{irep3}
\end{equation}
\end{lemma}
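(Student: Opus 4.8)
The plan is to start from \eqref{irep2}, write $\int_0^x = \int_0^{a}+\int_{a}^x$ with $a:=N^{-\frac{m}{2}+\epsilon'}$, and show that the $x$-independent remainder
\begin{equation*}
R_N(y):=\int_0^{a} S_N(t,y)\,dt + C_{N,m}\left(\frac{2}{N}\right)^{m}\int_0^y w(N^{\frac{m}{2}}t)\,dt - \frac{1}{2}
\end{equation*}
is exponentially small, uniformly for $y>x>N^{-\frac{m}{2}+\epsilon}$; in view of \eqref{irep2} this is exactly the content of \eqref{irep3}.

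First I would record the exact identity $C_{N,m}\left(\frac{2}{N}\right)^{m}\int_0^\infty w(N^{\frac{m}{2}}t)\,dt=\frac12$, which follows from the moment formula \eqref{eqmomweight} with $k=0$ together with the value $C_{N,m}\left(\frac{2}{N}\right)^{m}=N^{\frac{m}{2}}(2\pi)^{-\frac{m}{2}}$ read off from \eqref{cnmdef}. Hence the last two terms of $R_N(y)$ collapse to $-C_{N,m}\left(\frac{2}{N}\right)^{m}\int_y^\infty w(N^{\frac{m}{2}}t)\,dt$, and since $y\ge x>N^{-\frac{m}{2}+\epsilon}$ the crude weight bound \eqref{crudegin} (after the substitution $s=N^{\frac{m}{2}}t$) shows this is $O(e^{-cNy^{2/m}})=O(e^{-cN^{2\epsilon/m}})$, hence of the claimed size.

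For the surviving term I would use representation \eqref{Srep1} to obtain
\begin{equation*}
\begin{split}
\int_0^{a} S_N(t,y)\,dt = {}& 2C_{N,m}\int_0^{a} w(N^{\frac{m}{2}}t)\int_y^\infty(v-t)\,w(N^{\frac{m}{2}}v)f_{N-2}(N^{m}tv)\,dv\,dt\\
&+ C_{N,m}D_{N,m}\int_0^{a} t^{N-1}w(N^{\frac{m}{2}}t)\,dt.
\end{split}
\end{equation*}
The second term is super-exponentially small: a Stirling estimate gives $C_{N,m}D_{N,m}=e^{\frac{mN}{2}+O(\log N)}$, whereas on $(0,a]$ one has $t^{N-1}\le a^{N-1}=e^{(-\frac{m}{2}+\epsilon')(N-1)\log N}$ and $\int_0^{a} w(N^{\frac{m}{2}}t)\,dt\le CN^{-\frac{m}{2}}$, so the product decays like $e^{-cN\log N}$. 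For the first term I would bound $f_{N-2}(N^{m}tv)\le f_\infty(N^{m}tv)\le Ce^{Nm(tv)^{1/m}}$ using \eqref{crudefn} and $w(N^{\frac{m}{2}}v)\le Ce^{-\frac{Nm}{2}v^{2/m}}$ using \eqref{crudegin}; since $\epsilon'<\epsilon$ forces $a^{1/m}\le\frac14 v^{1/m}$ for every $v\ge y$ once $N$ is large, the combined exponent in $v$ is $\le -\frac{Nm}{4}v^{2/m}$ on the range of integration, so the inner integral is $O(N^{-m}e^{-cNy^{2/m}})=O(N^{-m}e^{-cN^{2\epsilon/m}})$ uniformly in $t\in(0,a]$; multiplying by the polynomial factor $C_{N,m}$ and by $\int_0^{a} w(N^{\frac{m}{2}}t)\,dt=O(N^{-\frac{m}{2}})$ leaves $O(e^{-cN^{2\epsilon/m}})$.

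Assembling the bounds gives $R_N(y)=O(e^{-cN^{2\epsilon/m}})$ uniformly in $x>N^{-\frac{m}{2}+\epsilon}$, which proves \eqref{irep3} (the error being of the form $O(e^{-N^{\epsilon}})$ in the convention used throughout Section \ref{se:prelims}). I expect the double integral in the previous paragraph to be the only real obstacle: one has to play off the exponential growth $e^{Nm(tv)^{1/m}}$ coming from $f_\infty$ against the Gaussian-type decay of the weight in $v$, and the argument works precisely because the cutoff $t\le N^{-\frac{m}{2}+\epsilon'}$ with $\epsilon'<\epsilon$ keeps the integrand strictly on the decaying side; a lesser point is nailing the Stirling asymptotics of $C_{N,m}D_{N,m}$ so that the boundary factor $t^{N-1}$ genuinely dominates.
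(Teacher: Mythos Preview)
Your proof is correct and follows essentially the same strategy as the paper: collapse the weight terms using the moment identity \eqref{eqmomweight}, kill the boundary term $C_{N,m}D_{N,m}\int_0^a t^{N-1}w$ via Stirling and $t^{N-1}\le a^{N-1}$, and control the double integral by bounding $f_{N-2}\le f_\infty$ and playing the growth $e^{Nm(tv)^{1/m}}$ against the weight decay in $v$. The only real difference is cosmetic: the paper splits the $v$-integral at $v^*=\max(MN^{-m}/t,\,N^{-m/2+\epsilon})$ to separate the region where \eqref{crudefn} is stated from the region where $f_\infty$ is simply bounded, whereas you invoke \eqref{crudefn} globally (which is fine since $f_\infty$ is bounded when its argument is) and then use the clean inequality $t^{1/m}\le\tfrac14 v^{1/m}$ directly---a slight streamlining of the same idea.
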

\begin{proof}
We first note that
\begin{equation}
I_{N}(x,y) = \int_{0}^{x}dt\,S_{N}(t,y) + O(e^{-cN^{\epsilon}}).
\end{equation}
To see this, we use the exact identity
\begin{equation}
C_{N,m}\left(\frac{2}{N}\right)^{m}\int_{0}^{\infty}dt\,w(N^{\frac{m}{2}}t) = \frac{1}{2},
\end{equation}
which follows from \eqref{eqmomweight}. Comparing with the middle term of \eqref{irep2} the error in this approximation is,
\begin{equation}
C_{N,m}\left(\frac{2}{N}\right)^{m}\int_{y}^{\infty}dt\,w(N^{\frac{m}{2}}t) = O(e^{-cN^{\epsilon}}), \label{weightint}
\end{equation}
where we used that $y > N^{-\frac{m}{2}+\epsilon}$.

It suffices to show that choosing $0 < \epsilon' < \epsilon$ we have
\begin{equation}
\int_{0}^{MN^{-\frac{m}{2}+\epsilon'}}dt\,S_{N}(t,y) = O(e^{-cN^{\epsilon}}) \label{suff}
\end{equation}
Inserting \eqref{Srep1} we note that the contribution to \eqref{suff} from the second term of \eqref{Srep1} is exponentially small, this follows from Stirling's formula and the presence of the term $t^{N-1}$. Next we consider the contribution to \eqref{suff} from first term of \eqref{Srep1}. We apply the bound $f_{N-2}(tvN^{m}) < f_{\infty}(tvN^{m})$ and split the integration region by first supposing that $v > v^{*} := \mathrm{max}(MN^{-m}/t,N^{-\frac{m}{2}+\epsilon})$ for a large constant $M$, so that $tvN^{m} > M$ and we may use the crude bounds \eqref{crudefn} and \eqref{crudegin} to obtain,
\begin{align}
&w(N^{\frac{m}{2}}t)w(N^{\frac{m}{2}}v)f_{\infty}(tvN^{m}) \leq w(N^{\frac{m}{2}}t)e^{\frac{m}{2}Nt^{\frac{2}{m}}}e^{-\frac{m}{2}N(t^{\frac{1}{m}}-v^{\frac{1}{m}})^{2}}\\
&= e^{-c(N-1)(t^{\frac{1}{m}}-v^{\frac{1}{m}})^{2}}
w(N^{\frac{m}{2}}t)e^{\frac{m}{2}Nt^{\frac{2}{m}}}e^{-\frac{m}{2}(t^{\frac{1}{m}}-v^{\frac{1}{m}})^{2}} \label{crudeinbnd2}\\
&\leq Ce^{-cN^{\frac{2\epsilon}{m}}}w(N^{\frac{m}{2}}t)e^{\frac{mN}{2}t^{\frac{2}{m}}}e^{-\frac{m}{2}(t^{\frac{1}{m}}-v^{\frac{1}{m}})^{2}}
\label{crudeinbnd3}
\end{align}
where we used that $0 < t < N^{-\frac{m}{2}+\epsilon'}$ and $v > N^{-\frac{m}{2}+\epsilon}$ with $\epsilon > \epsilon'$ in \eqref{crudeinbnd3}. For the integration over $v$ the last factor in \eqref{crudeinbnd3} can be bounded independently of $t$ and is integrable on $[y,\infty)$. Next for the integration over $t$, if $t > MN^{-\frac{m}{2}}$ for some large constant $M$ \eqref{crudegin} implies that $w(N^{\frac{m}{2}}t)e^{\frac{m}{2}Nt^{\frac{2}{m}}} \leq C$. Likewise if $t \leq MN^{-\frac{m}{2}}$ the factor $e^{cNt^{\frac{2}{m}}}$ is uniformly bounded and the weight $w(N^{\frac{m}{2}}t)$ is integrable.   Hence the contribution to \eqref{suff} on $v > v^{*}$ is $O(e^{-cN^{\frac{2\epsilon}{m}}})$. On the other hand if $v < v^{*}$ so that $tvN^{m}$ is bounded we use that $f_{\infty}(tvN^{m}) < C$ uniformly by absolute convergence of the series $f_{\infty}$. Then the required exponential decay follows from \eqref{weightint}. A similar approach applies to the contribution coming from the second term of \eqref{Srep1}.
\end{proof}
\begin{remark}
\label{rem:saddle}
A key point in the representations \eqref{Srep1} or \eqref{prekernel} is that the integrand is dominated by points in a small vicinity of the saddle point $v=x$. A distinct advantage of representation \eqref{Srep1} over \eqref{prekernel} is that this saddle point is located outside the domain of integration, at least if $y > x$. This is a crucial point when proving exponential decay of the correlation kernel and in proving Theorem \ref{th:globintro}. In contrast, representation \eqref{prekernel} integrates fully over the saddle point making it more challenging to isolate the exponentially small contributions to the asymptotics. The representation \eqref{irep3} has a similar advantage over \eqref{ikernintro}.
\end{remark}

\begin{proof}[Proof of Theorem \ref{th:globintro}]	
We start with the proof of \eqref{Dnxyintro} based on representation \eqref{drep2}. Putting together Propositions \ref{prop:gin1} and \ref{prop:gin3}, we have the following asymptotics as $N \to \infty$,
\begin{equation}
w(N^{\frac{m}{2}}x)w(N^{\frac{m}{2}}y)f_{\infty}(N^{m}xy) = c_{m}^{(1)}N^{-\frac{3(m-1)}{2}}(xy)^{-\frac{3(m-1)}{2m}}e^{-\frac{Nm}{2}(y^{\frac{1}{m}}-x^{\frac{1}{m}})^{2}}\,(1+o(1)) 
\end{equation}
where
\begin{equation}
c_{m}^{(1)} = 2^{m-1}(2\pi)^{\frac{m-1}{2}}m^{-3/2}.
\end{equation}
Then from Proposition \ref{prop:gin2} and the crude bound \eqref{crudegin} we can estimate the error
\begin{equation}
\bigg{|}w(N^{\frac{m}{2}}x)w(N^{\frac{m}{2}}y)\left(f_{N-2}(N^{m}xy)-f_{\infty}(N^{m}xy)\right)\bigg{|} \leq N^{c}e^{-N(\phi_{m}(x)+\phi_{m}(y))} \label{fnfinfbnd}
\end{equation}
where $\phi_{m}(x) = \frac{m}{2}(x^{\frac{2}{m}}-1)-\log(x)$. Since $x < 1-N^{-\frac{1}{2}+\epsilon}$ we have $N\phi_{m}(x) \geq cN^{\epsilon}$ and so \eqref{fnfinfbnd} is $O(e^{-cN^{\epsilon}})$. Therefore,
\begin{equation}
\begin{split}
&w(N^{\frac{m}{2}}x)w(N^{\frac{m}{2}}y)f_{N-2}(N^{m}xy)\\
&= c_{m}^{(1)}N^{-\frac{3(m-1)}{2}}(xy)^{-\frac{3(m-1)}{2m}}e^{-\frac{Nm}{2}(y^{\frac{1}{m}}-x^{\frac{1}{m}})^{2}}\,(1+o(1)) + O(e^{-cN^{\epsilon}}) \label{wrwrf}
\end{split}
\end{equation}

The estimate \eqref{wrwrf} holds uniformly on $(x,y) \in E_{N}^{2}$. Replacing $x \to x^{m}$, $y \to y^{m}$ and inserting \eqref{wrwrf} into \eqref{drep2} shows that
\begin{align}
(xy)^{m-1}D_{N}(x^{m},y^{m}) &= \frac{N^{\frac{3}{2}}}{m^{\frac{3}{2}}\sqrt{2\pi}}\frac{y^{m}-x^{m}}{(xy)^{\frac{m-1}{2}}}\,e^{-\frac{Nm}{2}(x-y)^{2}}(1+o(1)) + O(e^{-N^{\epsilon}}) \label{l1dnxy}\\
&= \frac{N^{\frac{3}{2}}}{m^{\frac{3}{2}}\sqrt{2\pi}}m(y-x)\,e^{-\frac{Nm}{2}(x-y)^{2}}(1+o(1)) + O(e^{-N^{\epsilon}}), \label{l2dnxy}
\end{align}
where in the second line of \eqref{l2dnxy} we used the Taylor expansion \eqref{qsmallu} assuming $|y-x| \leq N^{-\frac{1}{2}+\epsilon'}$ and $x, y > N^{-\frac{1}{2}+\epsilon}$ with $0 < \epsilon' < \epsilon$. If on the other hand we have $|y-x| > N^{-\frac{1}{2}+\epsilon'}$ then the first line \eqref{l1dnxy} is $O(e^{-N^{\epsilon'}})$. This establishes the first formula \eqref{Dnxyintro} of the global approximation. 

To establish \eqref{Snxyintro} requires a little more work. Firstly we note that the second term in \eqref{Srep1} can be neglected in the bulk. Indeed, by Stirling's formula and the crude bound \eqref{crudegin} we obtain
\begin{equation}
C_{N,m}D_{N,m}x^{N-1}w(N^{\frac{m}{2}}x) \leq N^{c}e^{-N\phi_{m}(x)} \label{bndsecondterm}
\end{equation}
where $\phi_{m}(x) = \frac{m}{2}\left(x^{\frac{2}{m}}-1\right)-\log(x)$. Hence \eqref{bndsecondterm} is $O(e^{-cN^{\epsilon}})$ uniformly for $x \in E_{N}$.

Next we deal with the main term in \eqref{Srep1}. We begin by assuming $0<x<y$. As mentioned in Remark \ref{rem:saddle}, the main contribution to the integral in \eqref{Srep1} comes from a small neighbourhood of the point $v=x$. Hence the points that are too far away from this neighbourhood will have an exponentially small contribution. In the bounds that follow it will be helpful to define $y^{*}_{\pm} =\left(y^{\frac{1}{m}}\pm N^{-\frac{1}{2}+\epsilon'}\right)^{m}$ with $x^{*}_{\pm}$ defined similarly. In particular, inserting \eqref{wrwrf} into \eqref{Srep1}, we claim that the part of the integral where $v > y_{+}^{*}$ where $0 < \epsilon' < \epsilon$ can be neglected. Then the interval $[y,y_{+}^{*}]$ is entirely contained in the bulk region $E_{N}$ for any $\epsilon''>0$ with $0 < \epsilon'' < \epsilon' < \epsilon$. This allows us to apply the asymptotics \eqref{wrwrf}, yielding
\begin{equation}
S_{N}(x,y) = K_{N,m}\int_{y}^{y^{*}_{+}}dv\,\frac{(v-x)}{(xv)^{\frac{3(m-1)}{2m}}}\,e^{-\frac{Nm}{2}(v^{\frac{1}{m}}-x^{\frac{1}{m}})^{2}}(1+o(1)) + O(e^{-N^{\epsilon}})\label{sasympt}
\end{equation}
uniformly on $(x,y) \in E_{N}^{2}$, where 
\begin{equation}
K_{N,m} = \frac{N^{\frac{3}{2}}}{m^{\frac{3}{2}}\sqrt{2\pi}}.
\end{equation}
Before proving the claim \eqref{sasympt} we show how to use it to obtain the desired estimate \eqref{Snxyintro} of Theorem \ref{th:globintro}. Working in variables $x \to x^{m}$, $y \to y^{m}$ and $v \to v^{m}$, we make the further change of variables $v \to x+\frac{v}{\sqrt{N}}$ and write \eqref{sasympt} in terms of the function $Q(x,v/\sqrt{N})$ defined in Lemma \ref{lem:qbound},
\begin{align}
x^{m-1}S_{N}(x^{m},y^{m}) &= \frac{mK_{N,m}}{N}\int_{(y-x)\sqrt{N}}^{(y-x)\sqrt{N}+N^{\epsilon}}dv\,\sqrt{N}Q(x,v/\sqrt{N})\,e^{-\frac{mv^{2}}{2}}\,(1+o(1))+O(e^{-N^{\epsilon}})\\
&= \frac{mK_{N,m}}{N}\int_{(y-x)\sqrt{N}}^{(y-x)\sqrt{N}+N^{\epsilon}}dv\,mve^{-\frac{mv^{2}}{2}}\,(1+o(1))+O(e^{-N^{\epsilon}}) \label{qbnd}\\
&= \sqrt{\frac{N}{2\pi m}}e^{-Nm\frac{(x-y)^{2}}{2}}(1+o(1))+O(e^{-N^{\epsilon}}) \label{SNest}
\end{align}
where in the second line \eqref{qbnd} we used the Taylor expansion \eqref{qsmallu}. Now we prove the claim \eqref{sasympt}. Consider $A := [y_{+}^{*},\infty)$ and denote the contribution to \eqref{Srep1} with the integral over $v$ restricted to the set $A$ as $I_{A}$. On this interval we apply \eqref{crudegin} and the following uniform bound for $xv>0$, 
\begin{equation}
f_{N-2}(xvN^{m}) \leq f_{\infty}(xvN^{m}) \leq Ce^{Nm(xv)^{\frac{1}{m}}}. \label{fintbnd}
\end{equation}
Then noting that $y^{*}_{+} > x^{*}_{+}$ we get
\begin{equation}
\begin{split}
I_{A} &\leq N^{c}\int_{x^{*}_{+}}^{\infty}dv\,(v-x)e^{-\frac{Nm}{2}(v^{\frac{1}{m}}-x^{\frac{1}{m}})^{2}}\\
&\leq C(N_0)\,N^{c}e^{-\frac{(N-N_{0})m}{2}((x^{*}_{+})^{\frac{1}{m}}-x^{\frac{1}{m}})^{2}} = O(e^{-N^{\epsilon'}})
\end{split}
\end{equation}
This establishes \eqref{Snxyintro} when $0 < x < y$. 

If instead $0 < y < x$ we work with representation \eqref{Srep3}. As before, the integral is dominated by points in a small neighbourhood of $v=x$. We begin by cutting out parts of the integral far away from this neighbourhood, this time the interval $A = [0,y^{*}_{-}]$. Since $y < x$, we have 
\begin{align}
&\bigg{|}N^{c}\int_{MN^{-\frac{m}{2}-\epsilon}}^{y^{*}_{-}}dv\,(x-v)w(N^{\frac{m}{2}}x)w(N^{\frac{m}{2}}v)f_{N-2}(N^{m}xv)\bigg{|}\\
&\leq N^{c}\int_{MN^{-\frac{m}{2}-\epsilon}}^{x^{*}_{-}}dv\,e^{-\frac{Nm}{2}(x^{\frac{1}{m}}-v^{\frac{1}{m}})^{2}}w(N^{\frac{m}{2}}v)e^{\frac{Nm}{2}v^{\frac{2}{m}}}\\
&\leq N^{c}e^{-\frac{Nm}{2}(x^{\frac{1}{m}}-(x^{*}_{-})^{\frac{1}{m}})^{2}}\int_{MN^{-\frac{m}{2}-\epsilon}}^{x^{*}_{-}}dv\,w(N^{\frac{m}{2}}v)e^{\frac{Nm}{2}v^{\frac{2}{m}}}\\
&\leq CN^{c}e^{-\frac{Nm}{2}(x^{\frac{1}{m}}-(x^{*}_{-})^{\frac{1}{m}})^{2}} = O(e^{-N^{\epsilon'}}),
\end{align}
where we used \eqref{crudegin} and \eqref{fintbnd}. On the region closer to the origin we obtain
\begin{align}
&\bigg{|}N^{c}\int_{0}^{MN^{-\frac{m}{2}-\epsilon}}dv\,(x-v)w(N^{\frac{m}{2}}x)w(N^{\frac{m}{2}}v)f_{N-2}(N^{m}xv)\bigg{|}\\
&\leq C\,N^{c}e^{-\frac{Nm}{2}x^{\frac{2}{m}}}\,\int_{0}^{MN^{-\frac{m}{2}-\epsilon}}dv\,w(N^{\frac{m}{2}}v)\\
&\leq C\,N^{c}e^{-\frac{Nm}{2}x^{\frac{2}{m}}} = O(e^{-cN^{\epsilon}}),
\end{align}
where we again used \eqref{crudegin} combined with $f_{N-2}(N^{m}xv) \leq f_{\infty}(N^{m}xv) \leq C$, which holds as $N^{m}xv$ is bounded on the domain of integration. The final bound then holds because $x > N^{-\frac{m}{2}+\epsilon}$ on $E_{N}$. Likewise using \eqref{crudegin} shows that the second term in \eqref{Srep3} is exponentially small. We can thus work with the first term in \eqref{Srep3} with the integration over $v$ restricted to $v \in [y_{-}^{*},y]$ and apply the same steps that led to \eqref{SNest}. This establishes \eqref{Snxyintro} for $0 < y < x$ and completes the proof of the global approximation for $S_{N}(x,y)$.

For the $I_{N}(x,y)$ kernel it suffices to assume that $0 < x < y$ by the anti-symmetry property of $I_{N}(x,y)$, see Lemma \ref{lem:dandi}. We make use of the refined asymptotics \eqref{irep3}. For a renewed value of $\epsilon>0$ chosen so that $0 < \epsilon' < \epsilon$, we insert \eqref{SNest} into \eqref{irep3} and changing variable $t \to t^{m}$ we obtain,
\begin{align}
I_{N}(x^{m},y^{m}) &= m\int_{N^{-\frac{1}{2}+\epsilon'}}^{x}dt\,t^{m-1}S_{N}(t^{m},y^{m})+O(e^{-N^{\epsilon}})\\
&= m^{2}\frac{K_{N,m}}{N}\int_{N^{-\frac{1}{2}+\epsilon'}}^{x}dt\,e^{-\frac{N}{2}m(t-y)^{2}}(1+o(1)) + O(e^{-N^{\epsilon}})\\
&= \frac{1}{2}\,\mathrm{erfc}\left(\frac{\sqrt{Nm}(y-x)}{\sqrt{2}}\right)(1+o(1)) + O(e^{-N^{\epsilon}}).
\end{align}
This proves the asymptotics \eqref{Inxyintro}. 

So far we have assumed that $x,y > 0$. The case where $x,y < 0$ can be accessed via the symmetries $S_{N}(-x,-y) = S_{N}(x,y)$, $I_{N}(-x,-y) = -I_{N}(x,y)$ and $D_{N}(-x,-y) = -D_{N}(x,y)$. The only remaining case to deal with is when $x$ and $y$ have mixed signs. In this case we claim that as $N \to \infty$,
\begin{equation}
S_{N}(x,y) = O(e^{-N^{\epsilon}}), \qquad D_{N}(x,y) = O(e^{-N^{\epsilon}}), \qquad I_{N}(x,y) = O(e^{-N^{\epsilon}}),\end{equation}
uniformly on $|x|, |y| \in E_{N}$. First assume $x<0$ and $y>0$. To proceed we use the crude bounds \eqref{crudegin} and the bound, for any $xv<-MN^{-\frac{m}{2}}$ with $M>0$ large, there is a small constant $c>0$ such that
\begin{equation}
|f_{\infty}(N^{m}xv)| \leq e^{N(m-c)(-xv)^{\frac{1}{m}}}.\end{equation}
Combining these bounds shows that
\begin{equation}
\begin{split}
w(N^{\frac{m}{2}}x)w(N^{\frac{m}{2}}y)f_{\infty}(N^{m}xy) &\leq e^{-Nm((-x)^{\frac{1}{m}}-y^{\frac{1}{m}})^{2}-cN(-xy)^{\frac{1}{m}}}\\
&\leq e^{-\delta N ((-x)^{\frac{2}{m}}+y^{\frac{2}{m}})}
\end{split}
\end{equation}
This immediately gives the bound for $D_{N}(x,y)$, since $|x|,|y| > N^{-\frac{m}{2}+\epsilon}$. Now for $S_{N}(x,y)$, using representation \eqref{Srep1} we obtain  
\begin{equation}
\begin{split}
|S(x,y)| &\leq N^{c}e^{-\delta N ((-x)^{\frac{2}{m}}}\int_{y}^{\infty}dv\,(x-v)e^{-\delta Nv^{\frac{2}{m}}}\\
&\leq N^{c}e^{-\delta N ((-x)^{\frac{2}{m}}-\delta N y^{\frac{2}{m}}} \label{Saltsign}
\end{split}
\end{equation}
Finally, for $I_{N}(x,y)$ we use \eqref{irep2} for $x<0$, namely
\begin{equation}
I_{N}(x,y) = \int_{x}^{-MN^{-\frac{m}{2}}}dt\,S_{N}(t,y) + O(e^{-N^{\epsilon}})
\end{equation}
Now since $t<0$ and $y>0$ we can use \eqref{Saltsign} and obtain $I_{N}(x,y) = O(e^{-N^{\epsilon}})$. This completes the proof of Theorem \ref{th:globintro}.
\end{proof}

\begin{proof}[Proof of Theorem \ref{th:bulkconvintro}]
This follows from Theorem \ref{th:globintro} and the limit
\begin{equation}
\lim_{N \to \infty}\frac{Nm}{2}\left(\left(E+\frac{\xi}{2\sqrt{Nm}\rho(E)}\right)^{\frac{1}{m}}-\left(E+\frac{\zeta}{2\sqrt{Nm}\rho(E)}\right)^{\frac{1}{m}}\right)^{2} = \frac{1}{2}(\xi-\zeta)^{2},
\end{equation}
holding uniformly on compact subsets of $\xi$ and $\zeta$ for any fixed $E \in (-1,1)\setminus\{0\}$. 
\end{proof}


%

\section{Central limit theorems for linear statistics of real eigenvalues}
The purpose of this section is to prove asymptotic normality of linear statistics of real eigenvalues as defined in \eqref{linstat} and to prove Theorems \ref{th:ginconv} and \ref{th:ginconvmeso}. There are three steps: a truncation, a variance calculation, and finally an estimate on the higher cumulants. Recall that our estimates on the correlation kernel are restricted to the set $\mathcal{E}_{N}$, \begin{equation}
\mathcal{E}_{N} := \{x \in [-1,1] : \{|x| > N^{-\frac{m}{2}+\epsilon}\} \wedge \{|x| < 1-N^{-\frac{1}{2}+\epsilon}\}\}. \label{calen}
\end{equation}
This is precisely the region where the global approximation Theorem \ref{th:globintro} is applicable. Relatively few real eigenvalues of the matrix $G^{(m)}$ are located in the complementary region $\mathcal{E}_{N}^{\mathrm{c}}$. Therefore we decompose the linear statistic \eqref{linstat} as $\xi_{N,m}(f) = \tilde{\xi}_{N,m}(f) +  \tilde{\xi}^{\mathrm{c}}_{N,m}(f)$, where
\begin{equation}
\tilde{\xi}_{N,m}(f) = \sum_{j=1}^{n}f(\lambda_{j})\mathbbm{1}_{\lambda_{j} \in \mathcal{E}_{N}}, \qquad \tilde{\xi}^{\mathrm{c}}_{N,m}(f) = \sum_{j=1}^{n}f(\lambda_{j})\mathbbm{1}_{\lambda_{j} \in \mathcal{E}^{\mathrm{c}}_{N}}. \label{xidef}
\end{equation}
The $L^{1}$-norm of $\xi_{N,m}(f)$ is known to be of order $\sqrt{N}$, a consequence of the known law of large numbers results for real Ginibre matrices and their products \cite{EKS94,S17,LMS21}. The $L^{1}$-norm of the complementary statistic $\tilde{\xi}^{\mathrm{c}}_{N,m}(f)$ turns out to be smaller, of order $N^{\epsilon}$. More precisely, if we have the boundedness property
\begin{equation}
\tau(f) := \sup_{x \in \mathbb{R}}\bigg\{\lvert f(x) \rvert e^{-cx^{\frac{2}{m}}}\bigg\} < \infty,
\end{equation}
it can be shown that
\begin{equation}
\mathbb{E}\left(|\tilde{\xi}^{\mathrm{c}}_{N,m}(f)|\right) < C\,\tau(f)\,N^{\epsilon}. \label{L1-statement}
\end{equation}
By choosing $\epsilon>0$ sufficiently small, the normalisations of Theorem \ref{th:ginconv} and \ref{th:ginconvmeso} are such that this term will not contribute to the distributional convergence. We give the proof of \eqref{L1-statement} in Appendix \ref{se:truncation}. 

Regarding the truncated term $\tilde{\xi}_{N,m}(f)$ of \eqref{xidef}, in Section \ref{se:variance} we calculate the variance of $\tilde{\xi}_{N,m}(f)$ and show that it grows on the order $\sqrt{N}$. Then we show that all higher cumulants of $\tilde{\xi}_{N,m}(f)$ are also $O(\sqrt{N})$, this is obtained in the final Sections 
\ref{se:cumulants}, \ref{se:clustering} and \ref{se:cumulantbound}. Such estimates show that the $k^{\mathrm{th}}$ cumulant of the normalised random variable $N^{-\frac{1}{4}}\tilde{\xi}_{N,m}(f)$ is of order $N^{\frac{1}{2}-\frac{k}{4}}$, which tends to zero for any $k\geq 3$. This implies convergence in distribution to the standard normal
\begin{equation}
\frac{\tilde{\xi}_{N,m}(f)-\mathbb{E}(\tilde{\xi}_{N,m}(f))}{\sqrt{\mathrm{Var}(\tilde{\xi}_{N,m}(f))}} \overset{d}{\longrightarrow} \mathcal{N}(0,1).
\end{equation}
By adapting this approach to the mesoscopic linear statistic $\xi^{(\tau)}_{N,m}$ we will prove Theorem \ref{th:ginconvmeso} by similar means.

\subsection{Variance formulas}
\label{se:variance}
In this section we calculate the variance of the truncated linear statistic $\tilde{\xi}_{N,m}(f)$ defined in \eqref{xidef} and its mesoscopic version $\tilde{\xi}^{(\tau)}_{N,m}(f)$. Our goal is to prove the following.
\begin{proposition}
\label{prop:var}
Suppose that $f$ satisfies the assumptions of Theorem \ref{th:ginconv}. Then
\begin{equation}
\lim_{N \to \infty}N^{-\frac{1}{2}}\,\mathrm{Var}(\tilde{\xi}_{N,m}(f)) = \sqrt{\frac{2m}{\pi}}\,(2-\sqrt{2})\int_{-1}^{1}dx\,f(x)^{2}\rho(x). \label{varreals}
\end{equation}
For the mesoscopic linear statistic, suppose $f$ satisfies the assumptions of Theorem \ref{th:ginconvmeso} with fixed $E \in (-1,1)\setminus\{0\}$ and exponent $0 < \tau < \frac{1}{2}$. Then\begin{equation}
\lim_{N \to \infty}N^{-\frac{1}{2}+\tau}\,\mathrm{Var}(\tilde{\xi}^{(\tau)}_{N,m}(f)) = \sqrt{\frac{2m}{\pi}}\,(2-\sqrt{2})\,\rho(E)\int_{-\infty}^{\infty}dx\,f(x)^{2}. \label{mesovarform}
\end{equation}
On the other hand, if $E=0$ and $0 < \tau < \frac{m}{2}$ we have
\begin{equation}
\label{meso0varform}
\lim_{N \to \infty}N^{-\frac{1}{2}+\frac{\tau}{m}}\,\mathrm{Var}(\tilde{\xi}^{(\tau)}_{N,m}(f)) = \frac{1}{\sqrt{2m\pi}}\,(2-\sqrt{2})\,\int_{-\infty}^{\infty}dx\,f(x)^{2}|x|^{-1+\frac{1}{m}}.
\end{equation}
\end{proposition}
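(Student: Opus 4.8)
The plan is to exploit the Pfaffian structure of the correlation functions together with the strong global approximation of Theorem \ref{th:globintro}. For any Pfaffian point process in derived form \eqref{derivform}, the variance of a linear statistic $\sum_j g(\lambda_j)$ has a standard representation in terms of the one- and two-point correlation functions, namely
\begin{equation}
\mathrm{Var}\Big(\sum_j g(\lambda_j)\Big) = \int g(x)^2 \rho^{(1)}_N(x)\,dx + \iint g(x)g(y)\big(\rho^{(2)}_N(x,y) - \rho^{(1)}_N(x)\rho^{(1)}_N(y)\big)\,dx\,dy.
\end{equation}
Using $\rho^{(1)}_N(x) = S_N(x,x)$ and $\rho^{(2)}_N(x,y) - \rho^{(1)}_N(x)\rho^{(1)}_N(y) = -\mathrm{Pf}$-cross-terms, for a derived-form kernel this collapses to the well-known identity
\begin{equation}
\rho^{(2)}_N(x,y) - \rho^{(1)}_N(x)\rho^{(1)}_N(y) = -S_N(x,y)S_N(y,x) - D_N(x,y)I_N(x,y),
\end{equation}
up to sign and the $\frac12\mathrm{sgn}$ correction in the $I_N$ block, which I would check carefully at the start. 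First I would restrict all integrals to $\mathcal{E}_N$ (allowed for the truncated statistic $\tilde\xi_{N,m}(f)$ by definition) and split into the positive and negative parts of $\mathcal{E}_N$; by the sign symmetries of $S_N,D_N,I_N$ stated in Theorem \ref{th:globintro}, the mixed-sign contributions are $O(e^{-N^\epsilon})$ and the two same-sign blocks contribute equally after the substitution $x\mapsto x^m$.

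Next I would substitute $x = u^m$, $y = v^m$ to bring in the clean Gaussian/erfc forms \eqref{Snxyintro}, \eqref{Dnxyintro}, \eqref{Inxyintro}. The one-point term contributes $\int f(x)^2\rho(x)\,dx$ to leading order after using $S_N(x^m,x^m)x^{m-1}\sim \sqrt{N/(2m\pi)}$ and matching against the density $\rho$. For the two-point term, after the change of variables and writing $v = u + t/\sqrt{Nm}$, the product $S_N(u^m,v^m)S_N(v^m,u^m) + D_N(u^m,v^m)I_N(u^m,v^m)$ becomes, up to Jacobian factors, a function of the rescaled gap $t$ times $e^{-\text{(stuff)}}$, and $f(u^m)f(v^m)\to f(u^m)^2$ on the relevant $O(N^{-1/2})$ scale. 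Collecting the $u$-integral gives $\int f(x)^2\rho(x)\,dx$ again, and the $t$-integral produces the numerical constant. Concretely I expect
\begin{equation}
\int_{-\infty}^{\infty}\Big(\tfrac{1}{2\pi}e^{-t^2} + (\text{erfc term})\Big)dt
\end{equation}
to evaluate, after the known $m=1$ computation of \cite{FN07}, to the combination yielding $\sqrt{2m/\pi}\,(2-\sqrt2)$; I would verify this constant by direct integration using $\int e^{-t^2}\mathrm{erfc}(t)\,dt$-type identities. The net sign works out so that the two-point contribution is $+\sqrt{2m/\pi}(2-\sqrt2)\int f^2\rho$ minus the one-point $\int f^2\rho$, and the two combine — this bookkeeping of signs and the overlap between the "diagonal" part of the two-point integral and the one-point term is the place to be most careful.

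For the mesoscopic statements, the scheme is identical but with $f$ replaced by $f(N^\tau(E - \cdot))$. When $E\neq 0$ the density $\rho$ is continuous and nonzero at $E$, so $\rho(x)\to\rho(E)$ on the shrinking window and $\int f(N^\tau(E-x))^2\rho(x)\,dx \sim N^{-\tau}\rho(E)\int f(x)^2\,dx$, giving \eqref{mesovarform} with the same universal constant since the local kernel is unchanged by the bulk universality of Theorem \ref{th:bulkconvintro}. When $E=0$ the analysis must instead use the origin scaling: here one is probing scale $N^{-\tau/m}$ near $x=0$, the relevant normalization is $N^{m/2}$-type rather than $N^{1/2}$-type, the singular factor $|x|^{-1+1/m}$ of $\rho$ survives in the limit, and the numerical constant picks up a factor coming from the different Jacobian — hence $\frac{1}{\sqrt{2m\pi}}$ rather than $\sqrt{2m/\pi}$. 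The main obstacle throughout is controlling the integrals uniformly near the endpoints of $\mathcal{E}_N$ and, for the $E=0$ mesoscopic case, justifying that the origin-regime kernel asymptotics (which lie outside the range of Theorem \ref{th:globintro}, valid only on $E_N$) can be pushed far enough toward $0$ — I would handle the genuinely small $|x|\lesssim N^{-m/2+\epsilon}$ region by the crude bounds of Propositions \ref{prop:gin1}--\ref{prop:gin3}, showing its contribution is lower order, while the region $|x|\gtrsim N^{-\tau/m}$ is covered by a rescaled version of Theorem \ref{th:globintro}.
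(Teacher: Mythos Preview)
Your overall strategy matches the paper's: start from the Pfaffian variance identity
\[
\mathrm{Var}(\tilde\xi_{N,m}(f)) = \int_{\mathcal{E}_N} f^2 S_N(x,x)\,dx - \iint_{\mathcal{E}_N^2} f(x)f(y)\big(S_N(x,y)S_N(y,x)+D_N(x,y)I_N(x,y)\big)\,dx\,dy,
\]
substitute $x\to x^m$, $y\to y^m$, and insert the global approximation of Theorem~\ref{th:globintro}. The diagonal term and the mixed-sign estimate are handled exactly as you describe.

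There is, however, a genuine gap in your treatment of the two off-diagonal integrals. You write that after the rescaling $v=u+t/\sqrt{Nm}$ one has $f(u^m)f(v^m)\to f(u^m)^2$ ``on the relevant $O(N^{-1/2})$ scale''. This step is a continuity statement about $f$, and the hypotheses of Theorem~\ref{th:ginconv} do \emph{not} assume continuity: $f$ is only locally integrable and measurable with a growth bound. For a discontinuous $f$ the pointwise approximation $f(v^m)\approx f(u^m)$ simply fails and the Laplace-type argument cannot be justified. The paper flags exactly this issue and circumvents it by recognising the inner integral as a convolution of $f_m(x):=f(x^m)\mathbbm{1}_{x^m\in E_N}$ with a Gaussian, then applying Parseval's identity in $L^2(\mathbb{R})$. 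For instance, the $S_NS_N$ term becomes
\[
\frac{mN}{2\pi}\iint f_m(x)f_m(y)e^{-Nm(x-y)^2}\,dx\,dy = \tfrac{1}{2}\sqrt{\tfrac{mN}{\pi}}\int_{\mathbb{R}}|\hat f_m(k)|^2 e^{-\pi^2 k^2/(Nm)}\,dk \;\longrightarrow\; \tfrac{1}{2}\sqrt{\tfrac{mN}{\pi}}\int|\hat f_m|^2,
\]
by dominated convergence, which only uses $f_m\in L^2$. The $D_NI_N$ term is treated analogously, and the remaining $t$-integral $\int_{\mathbb{R}}|t|e^{-t^2/2}\mathrm{erfc}(|t|/\sqrt2)\,dt=2-\sqrt2$ gives the constant. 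You should replace the ``$f(v^m)\to f(u^m)$'' step with this Fourier argument.

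A smaller point: your concern about the $E=0$ mesoscopic case needing the origin kernel is unnecessary. The variance is that of the \emph{truncated} statistic $\tilde\xi^{(\tau)}_{N,m}$, so all integrals are over $\mathcal{E}_N$, which already excludes $|x|<N^{-m/2+\epsilon}$. Since $\tau<m/2$, the bulk of the mesoscopic window $|x|\sim N^{-\tau}$ lies inside $\mathcal{E}_N$, and the same global asymptotics \eqref{Snxyintro}--\eqref{Inxyintro} apply throughout; the paper obtains \eqref{meso0varform} directly from \eqref{varmesoproof} by the scaling $\int_{-1}^1 f(N^\tau x)^2\rho(x)\,dx\sim N^{-\tau/m}\tfrac{1}{2m}\int f(x)^2|x|^{-1+1/m}\,dx$, with no separate origin analysis.
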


We start from a general formula for the variance of linear statistics of any Pfaffian point process with matrix kernel of the derived form:
\begin{equation}
\begin{split}
\mathrm{Var}\left(\sum_{j}f(\lambda_j)\mathbbm{1}_{\lambda_{j} \in \mathcal{E}_{N}}\right) =& \int_{\mathcal{E}_{N}}dx\,f(x)^{2}S_{N}(x,x)-\int_{\mathcal{E}_{N}^{2}}dx\,dy\,f(x)f(y)D_{N}(x,y)I_{N}(x,y)\\
&- \int_{\mathcal{E}_{N}^{2}}dx\,dy\,f(x)f(y)S_{N}(x,y)S_{N}(y,x). \label{varlinstat}
\end{split}
\end{equation}
In each of these integrals, the idea is to substitute the asymptotics of Theorem \ref{th:globintro}. From this we can immediately see that contributions to the integrals in \eqref{varlinstat} coming from opposite signs are exponentially small and can be neglected. We concentrate on the parts of the integrals where the coordinates $x$ and $y$ are both positive, as if they are both negative we use the symmetries mentioned in Theorem \ref{th:globintro}. Therefore we focus on the positive subset of $\mathcal{E}_{N}$ that we denote by $E_{N}$, as in \eqref{ENsetintro}. Throughout this Section we assume that $f$ satisfies the regularity assumed in Theorem \ref{th:ginconv}, or Theorem \ref{th:ginconvmeso} in the mesoscopic case.
For the first term in \eqref{varlinstat}, inserting the asymptotics \eqref{Snxyintro} with $x=y$, we get 
\begin{align}
\int_{E_N}dx\,f(x)^{2}S_{N}(x,x) &= m\int_{E_{N}^{\frac{1}{m}}}dx\, f(x^{m})^{2}x^{m-1}S_{N}(x^{m},x^{m}) \\
&\sim \sqrt{\frac{2Nm}{\pi}}\int_{0}^{1}dx\,f(x)^{2}\rho(x) \label{vardiag}
\end{align}
where $\rho(x) = \frac{1}{2m}\,|x|^{\frac{1}{m}-1}$ is the limiting density of real eigenvalues, and similarly for the contribution from the negative part of $\mathcal{E}_{N}$.
\begin{lemma}
\label{lem:snsnvar}
We have
\begin{equation}
\int_{\mathcal{E}_{N}^{2}}dx\,dy\,f(x)f(y)S_{N}(x,y)S_{N}(y,x) \sim \sqrt{\frac{Nm}{\pi}}\int_{-1}^{1}dx\,f(x)^{2}\rho(x) \label{snsnlim}
\end{equation}
\end{lemma}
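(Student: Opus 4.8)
The plan is to substitute the global approximation from Theorem~\ref{th:globintro} directly into the integral on the left-hand side of \eqref{snsnlim} and to reduce it to a Gaussian integral that can be evaluated explicitly. Since the mixed-sign contributions are $O(e^{-N^{\epsilon}})$, it suffices to treat the positive region $E_N$ (the negative region being handled by the symmetry $S_N(-x,-y) = S_N(x,y)$, which leaves $S_N(x,y)S_N(y,x)$ unchanged and thus simply doubles the contribution, accounting for the full interval $(-1,1)$ in the end). First I would pass to the variables $x \to x^m$, $y \to y^m$, picking up a Jacobian factor $m^2 (xy)^{m-1}$, so that the two kernel factors appear in the combination $x^{m-1}S_N(x^m,y^m)\cdot y^{m-1}S_N(y^m,x^m)$ to which \eqref{Snxyintro} applies. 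Using \eqref{Snxyintro} this product is
\begin{equation}
\frac{N}{2m\pi}\,e^{-Nm(x-y)^2}\,(1+o(1)) + O(e^{-N^{\epsilon}}),
\end{equation}
uniformly on $E_N^{1/m}\times E_N^{1/m}$, the cross terms between the main term and the $O(e^{-N^\epsilon})$ error being negligible after integration because the main term is integrable.

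Next I would localise: the factor $e^{-Nm(x-y)^2}$ forces $|x-y| \lesssim N^{-1/2+\epsilon'}$, so away from the diagonal the integrand is exponentially small and contributes $O(e^{-cN^{2\epsilon'}})$ (here one uses that $f$ has at most the allowed sub-Gaussian growth, so $f(x^m)f(y^m)$ times the Gaussian is uniformly integrable). On the diagonal strip I would change variables $y = x + u/\sqrt{N}$ and expand $f(y^m)^2 = f(x^m)^2(1+o(1))$ by local integrability of $f$ (more precisely, using that the Gaussian $\sqrt{N}e^{-mu^2}$ is an approximate identity, so that $\sqrt N \int f(x^m)f((x+u/\sqrt N)^m)\,e^{-mu^2}\,du \to f(x^m)^2 \int e^{-mu^2}\,du$ for a.e.\ $x$, together with a dominated-convergence argument to justify the $x$-integration). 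The $u$-integral gives $\int_{-\infty}^{\infty} e^{-mu^2}\,du = \sqrt{\pi/m}$, leaving
\begin{equation}
m^2 \cdot \frac{N}{2m\pi}\cdot \frac{1}{\sqrt{N}}\sqrt{\frac{\pi}{m}}\int_0^1 dx\,x^{2(m-1)}\cdot x^{-(m-1)}\, f(x^m)^2\,\big(\text{after reverting }x^m\to x\big),
\end{equation}
and tracking the Jacobian carefully one reverts to the original variable $x$ via $x^m \to x$, $dx \to \tfrac1m x^{1/m-1}\,dx$, recognising $\tfrac{1}{2m}x^{1/m-1} = \rho(x)$; collecting constants yields $\sqrt{N/(m\pi)}\cdot 2\int_0^1 f(x)^2\rho(x)\,dx = \sqrt{Nm/\pi}\int_{-1}^1 f(x)^2\rho(x)\,dx$ after including the negative part, which is \eqref{snsnlim}.

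\textbf{Main obstacle.} The analytic heart of the argument is the diagonal expansion step: because $f$ is only assumed locally integrable and measurable (no continuity), one cannot pointwise Taylor-expand $f(y^m)^2$, and must instead argue that convolution against the sharply peaked Gaussian kernel $\sqrt{N}e^{-mu^2}$ recovers $\int f^2\rho$ in the limit. I would handle this by a standard approximate-identity/Lebesgue-differentiation argument: write $f = g + h$ with $g$ continuous and compactly supported in $(-1,1)$ and $\|h\|$ small in an appropriate weighted $L^2$ sense, bound the $h$-contribution uniformly in $N$ using the explicit Gaussian decay and the growth bound \eqref{growthcond}, and treat the $g$-contribution by elementary continuity. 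A secondary technical point is controlling the boundary effects near $x = 0$ and $x = 1$ where $E_N$ has been excised; but the density $\rho$ is integrable at $0$ and the excised neighbourhoods have width $N^{-m/2+\epsilon}$ and $N^{-1/2+\epsilon}$ respectively, so these contribute $o(\sqrt{N})$ and can be discarded. The remaining manipulations (Jacobians, constants, the value of the Gaussian integral) are routine bookkeeping.
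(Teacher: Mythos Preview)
Your reduction to the Gaussian double integral is exactly the paper's first step: change variables $x\to x^{m}$, $y\to y^{m}$, insert \eqref{Snxyintro}, and arrive at
\[
\frac{mN}{2\pi}\int_{E_{N}^{1/m}\times E_{N}^{1/m}}dx\,dy\,f(x^{m})f(y^{m})\,e^{-Nm(x-y)^{2}}.
\]
Where you diverge is in the evaluation of this convolution. You propose the direct route: localise to the diagonal, substitute $y=x+u/\sqrt{N}$, and treat the sharply peaked Gaussian as an approximate identity, dealing with the absence of continuity of $f$ by an $L^{2}$ density argument. The paper instead recognises the $y$-integral as a convolution of $f_{m}(x):=f(x^{m})\mathbbm{1}_{x^{m}\in E_{N}}\in L^{2}(\mathbb{R})$ with the Gaussian, and applies Parseval's identity to rewrite the whole expression as
\[
\tfrac{1}{2}\sqrt{\tfrac{mN}{\pi}}\int_{\mathbb{R}}|\hat{f_{m}}(k)|^{2}e^{-\pi^{2}k^{2}/(Nm)}\,dk,
\]
after which dominated convergence in $k$ immediately gives $\tfrac{1}{2}\sqrt{mN/\pi}\,\|f_{m}\|_{2}^{2}$. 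The Fourier route is slicker precisely because it sidesteps the continuity issue you flag as the main obstacle: Parseval needs only $f_{m}\in L^{2}$, no density approximation, no Lebesgue-point argument. Your approach is correct and more elementary in spirit, but the splitting $f=g+h$ and the uniform-in-$N$ control of the $h$-contribution is genuine work that the Parseval trick avoids in one line. (A minor slip: the $\sqrt{N}$ in your approximate-identity display should not be there, and the powers of $x$ in your constant-tracking equation are muddled, but your final answer is correct.)
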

\begin{proof}
By Theorem \ref{th:globintro} we have
\begin{align}
&m^{2}\int_{E_{N}^{\frac{1}{m}}\times E_{N}^{\frac{1}{m}}}dx\,dy\,f(x^{m})f(y^{m})(xy)^{m-1}S(x^{m},y^{m})S(y^{m},x^{m})\\
&\sim \frac{mN}{2\pi}\int_{E_{N}^{\frac{1}{m}}\times E_{N}^{\frac{1}{m}}}dx\,dy\,f(x^{m})f(y^{m})\,e^{-Nm(x-y)^{2}}. \label{RHSconv}
\end{align}
The proof of \eqref{snsnlim} now follows easily from \eqref{RHSconv} by approximating the integral near the saddle point $y=x$, but in general this would need some continuity of the function $f$. Since we have $f \in L^{2}[-1,1]$ we can do a bit better using Fourier transforms. Namely, since the function $f_{m}(x) = f(x^{m})\mathbbm{1}_{x^{m}\in E_{N}}$ belongs to $L^{2}(\mathbb{R})$ we can identify the integral over $y$ as a convolution of $f_{m}$ with the Gaussian $e^{-Nmx^{2}}$ and apply Parseval's identity. This trick was used in the $m=1$ case in \cite{K15}. We define the Fourier transform of a function $f \in L^{2}(\mathbb{R})$ by the formula, 
\begin{equation}
\hat{f}(k) := \int_{-\infty}^{\infty}dx\,f(x)e^{-2\pi i k x}.
\end{equation}
Then we can rewrite \eqref{RHSconv} as
\begin{align}
&\frac{1}{2}\sqrt{\frac{mN}{\pi}}\int_{\mathbb{R}}dk\,|\hat{f_{m}}(k)|^{2}e^{-\frac{\pi^{2}k^{2}}{Nm}}\sim \frac{1}{2}\sqrt{\frac{mN}{\pi}}\int_{\mathbb{R}}dk\,|\hat{f_{m}}(k)|^{2}\\
&= \frac{1}{2}\sqrt{\frac{mN}{\pi}}\int_{E_{N}}dx\,f(x^{m})^{2} \sim \sqrt{\frac{mN}{\pi}}\int_{0}^{1}dx\,f(x)^{2}\rho(x).
\end{align} 
This completes the proof of Lemma \ref{lem:snsnvar}.
\end{proof}

\begin{lemma}
\label{lem:dninvar}
As $N \to \infty$ we have
\begin{equation}
\begin{split}
\int_{\mathcal{E}_{N}^{2}}dxdy\,f(x)f(y)D_{N}(x,y)I_{N}(x,y)\sim -\sqrt{\frac{Nm}{2\pi}}\,(2-\sqrt{2})\,\int_{-1}^{1}dx\,f(x)^{2}\rho(x).
\end{split}
\end{equation}
\end{lemma}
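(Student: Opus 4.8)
The plan is to follow the same strategy as in the proof of Lemma \ref{lem:snsnvar}, substituting the global asymptotics of Theorem \ref{th:globintro} for $D_{N}(x,y)$ and $I_{N}(x,y)$ and then reducing the resulting double integral to a one-dimensional integral against the limiting density. First I would restrict to the positive part $E_{N}$ of $\mathcal{E}_{N}$, since by Theorem \ref{th:globintro} the mixed-sign contributions are $O(e^{-N^{\epsilon}})$ and the negative-negative part is handled by the symmetries $D_{N}(-x,-y)=-D_{N}(x,y)$, $I_{N}(-x,-y)=-I_{N}(x,y)$, whose product is again symmetric; so it suffices to double the positive contribution. After the substitution $x \to x^{m}$, $y \to y^{m}$ (picking up a Jacobian $m^{2}(xy)^{m-1}$), I would insert \eqref{Dnxyintro} and \eqref{Inxyintro}, so that $(xy)^{m-1}D_{N}(x^{m},y^{m}) \sim \frac{N^{3/2}}{m^{1/2}\sqrt{2\pi}}\,(y-x)\,e^{-\frac{Nm}{2}(x-y)^{2}}$ and $I_{N}(x^{m},y^{m}) \sim \frac{1}{2}\mathrm{sgn}(x-y)\,\mathrm{erfc}\big(\sqrt{Nm}\,|y-x|/\sqrt{2}\big)$. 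Note the product $D_{N}I_{N}$ is, up to the test-function factors, a nonnegative kernel concentrated near the diagonal on scale $|x-y| \sim N^{-1/2}$, which is why the overall order is $\sqrt{N}$ and why a minus sign appears in the statement.

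The next step is to compute the scalar constant. Changing variables to $u = \sqrt{Nm}\,(x-y)$ near the diagonal, the product $D_{N}(x^{m},y^{m})I_{N}(x^{m},y^{m})$ becomes (after the Jacobians) $\frac{N}{2}\cdot\frac{1}{m^{1/2}\sqrt{2\pi}}\cdot\frac{1}{\sqrt{Nm}}\,|u|\,e^{-u^{2}/2}\,\mathrm{erfc}(|u|/\sqrt{2})$ times $m^{2}$ from the two Jacobian powers, leaving a net prefactor of order $\sqrt{N}$ and a $u$-integral
\begin{equation}
\int_{-\infty}^{\infty}|u|\,e^{-u^{2}/2}\,\mathrm{erfc}\!\left(\frac{|u|}{\sqrt{2}}\right)du = 2\int_{0}^{\infty}u\,e^{-u^{2}/2}\,\mathrm{erfc}\!\left(\frac{u}{\sqrt{2}}\right)du.
\end{equation}
This last integral I would evaluate by parts: with $dv = u\,e^{-u^{2}/2}\,du$, $v=-e^{-u^{2}/2}$, and $\frac{d}{du}\mathrm{erfc}(u/\sqrt{2}) = -\sqrt{2/\pi}\,e^{-u^{2}/2}$, one gets $2\big[\mathrm{erfc}(0)\cdot 1 - \sqrt{2/\pi}\int_{0}^{\infty}e^{-u^{2}}du\big] = 2\big[1 - \sqrt{2/\pi}\cdot\frac{\sqrt{\pi}}{2}\big] = 2 - \sqrt{2}$. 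This produces exactly the factor $(2-\sqrt{2})$ appearing in the statement, and combining the constants $\tfrac12$, $(m^{1/2}\sqrt{2\pi})^{-1}$, $(Nm)^{-1/2}$, $m^{2}$, and recalling $\rho(x) = \frac{1}{2m}x^{1/m-1}$, after converting the remaining one-dimensional integral back to the $x$ variable via $f(x^{m})^{2} = f(x^{m})^{2}$ and absorbing $m\,x^{m-1}\,dx$, gives $-\sqrt{Nm/(2\pi)}\,(2-\sqrt{2})\int_{0}^{1}f(x)^{2}\rho(x)\,dx$; doubling for the negative half yields the claimed limit.

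To make the diagonal reduction rigorous without assuming continuity of $f$, I would use the same Fourier/Parseval device employed in Lemma \ref{lem:snsnvar}: writing $g_{m}(x) = f(x^{m})\mathbbm{1}_{x^{m}\in E_{N}} \in L^{2}(\mathbb{R})$, the double integral is (to leading order) $\frac{N}{2\pi^{1/2}}\int g_{m}(x)g_{m}(y)\,h_{N}(x-y)\,dx\,dy$ where $h_{N}(t) = \sqrt{Nm}\,|t|\,e^{-\frac{Nm}{2}t^{2}}\mathrm{erfc}(\sqrt{Nm}|t|/\sqrt2)\cdot\frac{m}{\sqrt{2\pi}}$ (up to the constants collected above), an $L^{1}$ approximate identity with total mass $(2-\sqrt 2)\cdot(\text{const})$; applying Parseval, $\widehat{h_{N}}(k) \to \widehat{h_{N}}(0) = \int h_{N}$ uniformly on compacts, and $|\widehat{h_{N}}(k)|$ is uniformly bounded, so dominated convergence gives $\int |\widehat{g_{m}}(k)|^{2}\widehat{h_{N}}(k)\,dk \sim (\int h_{N})\int|\widehat{g_{m}}(k)|^{2}\,dk = (\int h_{N})\int_{E_{N}}f(x^{m})^{2}\,dx$, and the last integral converges to $2\int_{0}^{1}f(x)^{2}\rho(x)\,dx$ by the change of variables and the growth bound on $f$. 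The main obstacle I anticipate is controlling the errors carefully enough: the $(1+o(1))$ and $O(e^{-N^{\epsilon}})$ terms in Theorem \ref{th:globintro} are uniform only on $E_{N}$ and must be integrated against $f(x)f(y)$, which is merely $L^{2}$ with Gaussian-type growth, so one must check that the $o(1)$ error, multiplied by the $\sqrt{N}$-sized kernel and integrated, still vanishes — this requires splitting the domain into the near-diagonal strip $|x-y| \le N^{-1/2+\epsilon'}$ (where the relative $o(1)$ is uniform and Cauchy–Schwarz plus the $L^{2}$ bound on $f$ suffices) and the off-diagonal region (where the Gaussian decay beats any polynomial or sub-Gaussian growth of $f$), exactly as in the $S_{N}S_{N}$ case but with the extra $\mathrm{erfc}$ factor, which only helps.
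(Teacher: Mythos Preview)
Your proposal is correct and follows essentially the same route as the paper's proof: insert the global asymptotics \eqref{Dnxyintro} and \eqref{Inxyintro} after the change of variables $x\to x^{m}$, $y\to y^{m}$, apply the Parseval trick from Lemma \ref{lem:snsnvar} to the resulting convolution kernel, and evaluate $\int_{\mathbb{R}}|\xi|e^{-\xi^{2}/2}\mathrm{erfc}(|\xi|/\sqrt{2})\,d\xi = 2-\sqrt{2}$ via integration by parts. Your discussion of the error control is somewhat more explicit than what the paper writes down, but the argument is the same.
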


\begin{proof}
Inserting the asymptotics of Theorem \ref{th:globintro} and again applying Parseval's identity to the convolution we get
\begin{align}
&m^{2}\int_{E_{N}^{\frac{1}{m}}\times E_{N}^{\frac{1}{m}}}dx\,dy\,f(x^{m})f(y^{m})(xy)^{m-1}D_{N}(x^{m},y^{m})I_{N}(x^{m},y^{m})\\
&\sim -\frac{(Nm)^{\frac{3}{2}}}{2\sqrt{2\pi}}\int_{E_{N}^{\frac{1}{m}}\times E_{N}^{\frac{1}{m}}}dx\,dy\,f(x^{m})f(y^{m})\,|x-y|\,e^{-\frac{Nm}{2}(x-y)^{2}}
\mathrm{erfc}\left(\frac{\sqrt{Nm}|x-y|
}{\sqrt{2}}\right)\\
&= -\frac{1}{2}\sqrt{\frac{Nm}{2\pi}}\int_{\mathbb{R}}dk\,|\hat{f_{m}}(k)|^{2}\,\int_{\mathbb{R}}d\xi\,e^{-\frac{2\pi i k \xi}{\sqrt{Nm}}}\,|\xi|e^{-\frac{\xi^{2}}{2}}\,\mathrm{erfc}\left(\frac{|\xi|}{\sqrt{2}}\right) \\
&\sim -\frac{1}{2}\sqrt{\frac{Nm}{2\pi}}\int_{\mathbb{R}}dk\,|\hat{f_{m}}(k)|^{2}\,\int_{\mathbb{R}}d\xi\,|\xi|e^{-\frac{\xi^{2}}{2}}\,\mathrm{erfc}\left(\frac{|\xi|}{\sqrt{2}}\right) \label{parsednin}\\
&\sim -\sqrt{\frac{Nm}{2\pi}}\,(2-\sqrt{2})\,\int_{0}^{1}dx\,f(x)^{2}\rho(x), \label{finaldnin}
\end{align}
where we used the exact formula
\begin{equation}
\int_{-\infty}^{\infty}dx\,|\xi| \,e^{-\frac{1}{2}\xi^{2}}
\mathrm{erfc}\left(\frac{|\xi|
}{\sqrt{2}}\right) = 2-\sqrt{2},
\end{equation}
which is a simple consequence of integration by parts.
\end{proof}
\begin{proof}[Proof of Proposition \ref{prop:var}]
The first variance formula \eqref{varreals} follows immediately from putting together the preceeding Lemmas \ref{lem:dninvar}, \ref{lem:snsnvar}, \eqref{vardiag} and inserting them into the variance formula \eqref{varlinstat}. For the mesoscopic variance formulas \eqref{mesovarform} and \eqref{meso0varform} the same estimates show that for any fixed $E \in (-1,1)$ and $\tau>0$ we have
\begin{equation}
\mathrm{Var}(\tilde{\xi}_{N,m}^{(\tau)}) \sim \sqrt{\frac{2Nm}{\pi}}\,(2-\sqrt{2})\,\int_{-1}^{1}dx\,f\left(N^{\tau}(E-x)\right)^{2}\rho(x). \label{varmesoproof}
\end{equation} 
If $E \neq 0$ we use the assumptions of Theorem \ref{th:ginconvmeso}, namely the bound $\sup_{x \in \mathbb{R}}((1+|x|)^{1+\delta}f(x)^{2}) < \infty$ to control the part of the integral where $|x-E| > \delta_{0}$ for some $\delta_{0}$ chosen sufficiently small that the interval $[E-\delta_{0},E+\delta_{0}]$ does not intersect the origin. Then we have
\begin{align}
&\int_{-1}^{1}dx\,f\left(N^{\tau}(E-x)\right)^{2}\rho(x)\sim \int_{E-\delta_{0}}^{E+\delta_{0}}dx\,f\left(N^{\tau}(E-x)\right)^{2}\rho(x)\\
&= N^{-\tau}\int_{-N^{\tau}\delta_{0}}^{N^{\tau}\delta_{0}}dx\,f(x)^{2}\rho(E+xN^{-\tau}) \sim N^{-\tau}\,\rho(E)\int_{-\infty}^{\infty}dx\,f(x)^{2},
\end{align}
which follows from dominated convergence and the assumptions on $f$ in Theorem \ref{th:ginconvmeso}. This gives \eqref{mesovarform}. The proof of \eqref{meso0varform} follows similarly from \eqref{varmesoproof} and the estimate
\begin{equation}
\int_{-1}^{1}dx\,f(N^{\tau}x)^{2}\rho(x) \sim N^{-\frac{\tau}{m}}\,\frac{1}{2m}\,\int_{-\infty}^{\infty}dx\,f(x)^{2}\,|x|^{-1+\frac{1}{m}}.
\end{equation}
 \end{proof}

\subsection{Cumulants and cluster functions}
\label{se:cumulants}
Higher moments and cumulants of the linear statistic $\tilde{\xi}_{N,m}(f)$ depend on knowledge of $k$-point correlations functions $\rho^{(k)}_{N}(x_1,\ldots,x_k)$. In particular, the cumulants are most naturally expressed in terms of a variant of the $\rho^{(k)}_{N}$ known as \textit{cluster functions}, denoted $r^{(k)}_{N}(x_1,\ldots,x_k)$. The relation between cluster functions and correlation functions is analogous to the relation between ordinary cumulants and moments of a random variable. Although they are not used very frequently in random matrix theory, their properties were discussed in some detail already by Mehta \cite{Meh04}. They are sometimes also referred to as \textit{Ursell functions} or \textit{truncated $k$-point functions} \cite{NS12,BYY19}. To define them, we first introduce the relevant notation. 

Following \cite{NS12}, let $\Pi(k,j)$ denote the collection of all unordered partitions of the set $\{1,2,\ldots,k\}$ into $j$ non-empty disjoint blocks, and by $\Pi(k)$ the collection of all unordered partitions of the set $\{1,2,\ldots,k\}$ into non-empty disjoint blocks. For $\pi \in \Pi(k,j)$, we denote the blocks by $\{\pi_{1},\ldots,\pi_j\}$ with an arbitrarily chosen enumeration, and denote the lengths of the blocks by $p_t = |\pi_t|, 1 \leq t \leq j$. The number of blocks in the partition $\pi$ will be
denoted by $|\pi|$.

\begin{definition}
\label{def:cluster}
The $k$-point cluster function $r^{(k)}_{N}(x_1,\ldots,x_k)$ is a symmetric function of the configuration $X := (x_1,\ldots,x_k)$ defined by the formula
\begin{equation}
r^{(k)}_{N}(x_1,\ldots,x_k) = \sum_{\ell=1}^{k}(-1)^{\ell-1}(\ell-1)!\sum_{\pi \in \Pi(k,\ell)}\rho^{(p_1)}_{N}(X_{p_1})\ldots \rho^{(p_\ell)}_{N}(X_{p_\ell})
\end{equation}
where $X_{p_j} = (x_{i} : i \in \pi_j)$.
\end{definition}
As we are working with a Pfaffian point process, the correlation functions $\rho^{(k)}_{N}$ and cluster functions $r^{(k)}_{N}$ are completely described by the corresponding kernel $K_{N}(x,y)$. Therefore, we will often speak about the cluster functions \textit{corresponding to a given kernel}.

In general, having a good control on $k$-point cluster functions translates into having good control on the $k^{\mathrm{th}}$ cumulant, due to the following result.
\begin{lemma}
The $k^{\mathrm{th}}$ cumulant of the linear statistic $\tilde{\xi}_{N,m}(f)$, denoted $C_{k}(f)$, can be expressed in terms of cluster functions via
\begin{equation}
C_{k}(f) = \sum_{\pi \in \Pi(k)}\int_{\mathcal{E}_{N}^{n}}\prod_{j=1}^{n}dx_{j}\,f(x_j)^{p_j}\,r^{(n)}_{N}(x_1,\ldots,x_n) \label{cexpr}
\end{equation}
where $n$ is the number of blocks in the partition $\pi$.
\end{lemma}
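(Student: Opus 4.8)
The plan is to route everything through the cumulant generating function together with the classical linked-cluster (exponential) formula for point processes. Fix $N$ and abbreviate $g(x) = f(x)\,\mathbbm{1}_{x \in \mathcal{E}_{N}}$, so that $\tilde{\xi}_{N,m}(f) = \sum_{j} g(\lambda_{j})$. Since $G^{(m)}$ has at most $N$ real eigenvalues the sum has at most $N$ terms, and since the assumed growth bound on $f$ forces $f$ to be bounded on the fixed compact set $\mathcal{E}_{N}$, the function $g$ is bounded; in particular $\tilde{\xi}_{N,m}(f)$ has moments of all orders and
\begin{equation}
M(u) := \mathbb{E}\Big[\exp\Big(u\sum_{j}g(\lambda_{j})\Big)\Big] = \mathbb{E}\Big[\prod_{j}\big(1+(e^{u g(\lambda_{j})}-1)\big)\Big]
\end{equation}
is an entire function of $u$ with $\log M(u) = \sum_{k\geq 1}C_{k}(f)\,u^{k}/k!$ near $u=0$. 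Expanding the product and using the defining property of the correlation functions,
\begin{equation}
M(u) = \sum_{\ell=0}^{N}\frac{1}{\ell!}\int_{\mathcal{E}_{N}^{\ell}}\prod_{i=1}^{\ell}\big(e^{u g(x_{i})}-1\big)\,\rho^{(\ell)}_{N}(x_{1},\dots,x_{\ell})\,dx_{1}\cdots dx_{\ell}.
\end{equation}

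First I would take the logarithm. The passage from the ``moment'' functions $\rho^{(\ell)}_{N}$ to the ``cumulant'' functions $r^{(\ell)}_{N}$ of Definition \ref{def:cluster} is exactly the involution that intertwines the generating functional above with its logarithm: this is the linked-cluster / exponential formula for point processes (see \cite{Meh04}, or the combinatorial account in \cite{NS12}), and it applies verbatim here because the sums are finite. Hence
\begin{equation}
\log M(u) = \sum_{\ell\geq 1}\frac{1}{\ell!}\int_{\mathcal{E}_{N}^{\ell}}\prod_{i=1}^{\ell}\big(e^{u g(x_{i})}-1\big)\,r^{(\ell)}_{N}(x_{1},\dots,x_{\ell})\,dx_{1}\cdots dx_{\ell},
\end{equation}
the sum again being finite since $r^{(\ell)}_{N}$ is built from $\rho^{(p)}_{N}$ with $p\leq\ell$.

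Next I would extract $C_{k}(f) = k!\,[u^{k}]\log M(u)$. Expanding $e^{u g(x_{i})}-1 = \sum_{p\geq 1}u^{p}g(x_{i})^{p}/p!$ and collecting the terms of total degree $k$ in $u$ gives
\begin{equation}
C_{k}(f) = \sum_{\ell=1}^{k}\frac{1}{\ell!}\sum_{\substack{p_{1},\dots,p_{\ell}\geq 1\\ p_{1}+\cdots+p_{\ell}=k}}\binom{k}{p_{1},\dots,p_{\ell}}\int_{\mathcal{E}_{N}^{\ell}}\prod_{i=1}^{\ell}g(x_{i})^{p_{i}}\,r^{(\ell)}_{N}(x_{1},\dots,x_{\ell})\,dx_{1}\cdots dx_{\ell}.
\end{equation}
Because $r^{(\ell)}_{N}$ is symmetric, each integral depends only on the multiset $\{p_{1},\dots,p_{\ell}\}$, while the multinomial coefficient $\binom{k}{p_{1},\dots,p_{\ell}}$ counts the \emph{ordered} set partitions of $\{1,\dots,k\}$ into blocks of those sizes. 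Summing over compositions $(p_{1},\dots,p_{\ell})$ and dividing by $\ell!$ therefore turns the inner sum into a sum over the \emph{unordered} partitions $\pi\in\Pi(k,\ell)$ with block sizes $p_{i}=|\pi_{i}|$; summing over $\ell=1,\dots,k$ and using $g(x)^{p}=f(x)^{p}$ on $\mathcal{E}_{N}$ then produces precisely \eqref{cexpr} with $n=|\pi|$.

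The one step that carries real content — and the one I would be most careful about — is the exponential formula, i.e.\ verifying that $\log M(u)$ is the generating functional of the $r^{(\ell)}_{N}$ with exactly the combinatorial normalisation of Definition \ref{def:cluster}. If one prefers to avoid quoting it, the lemma can be proved by hand along the same lines: first expand $\mathbb{E}[\tilde{\xi}_{N,m}(f)^{k}]$ by grouping $k$-tuples of summation indices according to their coincidence pattern, which yields $\mathbb{E}[\tilde{\xi}_{N,m}(f)^{k}] = \sum_{\pi\in\Pi(k)}\int \prod_{t}g(x_{t})^{|\pi_{t}|}\,\rho^{(|\pi|)}_{N}$; then substitute this into the moment--cumulant relation $C_{k}=\sum_{\sigma\in\Pi(k)}(-1)^{|\sigma|-1}(|\sigma|-1)!\prod_{b\in\sigma}\mathbb{E}[\tilde{\xi}_{N,m}(f)^{|b|}]$ and carry out the Möbius inversion on the partition lattice, whereupon the alternating sums over common refinements collapse to the defining formula for $r^{(\ell)}_{N}$. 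Either way the argument is purely algebraic once the (immediate) finiteness of the moments is granted; the remaining multinomial-to-partition bookkeeping is routine.
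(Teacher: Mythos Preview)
Your argument is correct and is essentially the standard proof of this identity; the paper itself does not give a proof at all but simply cites \cite[Claim 4.3]{NS12}, which is precisely the linked-cluster/exponential formula you have written out in full. So you have supplied the details behind the citation rather than taken a different route.
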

\begin{proof}
This is a general relation between cumulants of a linear statistic and cluster functions. See for example \cite[Claim 4.3]{NS12}.
\end{proof}

Considering the pre-factors in Theorem \ref{th:globintro}, it will be useful in what follows to normalise the kernels by the limiting density, $\rho(x) = \frac{1}{2m}|x|^{-1+\frac{1}{m}}$. We also pre-multiply $D_{N}$ and $I_{N}$ by $N^{-\frac{1}{2}}$ and $N^{\frac{1}{2}}$ respectively. This motivates us to introduce a modified kernel 
\begin{equation}
\tilde{K}_{N}(x,y) = N^{-\frac{1}{2}}\begin{pmatrix} N^{-\frac{1}{2}}\rho(x)^{-1}\rho(y)^{-1}D_{N}(x,y) & \rho(x)^{-1}S_{N}(x,y)\\ -\rho(y)^{-1}S_{N}(y,x) & N^{\frac{1}{2}}I_{N}(x,y)\end{pmatrix}. \label{tildekn}
\end{equation}
Furthermore, by removing common factors of $\rho(x_j)$ and $N^{\frac{1}{2}}$ from alternate rows and columns of the Pfaffian, we have the identity
\begin{equation}
\mathrm{Pf}\{K_{N}(x_i,x_j)\}_{i,j=1}^{p} = \left(\prod_{j=1}^{p}\sqrt{N}\rho(x_j)\right)\,\mathrm{Pf}\{\tilde{K}_{N}(x_i,x_j)\}_{i,j=1}^{p}.
\end{equation}
We denote by $\tilde{\rho}^{(k)}_{N}$ and $\tilde{r}^{(k)}_{N}$ the correlation and cluster functions corresponding to the kernel \eqref{tildekn}. Then the cumulant formula \eqref{cexpr} becomes
\begin{equation}
C_{k}(f) = \sum_{\pi \in \Pi(k)}N^{\frac{n}{2}}\int_{\mathcal{E}_{N}^{n}}\prod_{j=1}^{n}dx_{j}\,\rho(x_j)f(x_j)^{p_j}\,\tilde{r}^{(n)}_{N}(x_1,\ldots,x_n). \label{cexpr2}
\end{equation}
The advantage of the new kernel $\tilde{K}_{N}(x,y)$ is that it normalises the singular powers of $x$ and $y$ that arise in the asymptotics of Theorem \ref{th:globintro}. This comes at the expense of the product of singular terms $\prod_{j=1}^{n}\rho(x_j)$ appearing in the integration measure. As we shall see later, when expressed in variables $x_{j} \to x_{j}^{m}$ the Jacobian of this change of variables perfectly cancels such singular terms.

\subsection{Clustering property of the eigenvalues}
\label{se:clustering}
The main input for bounding the cumulants in \eqref{cexpr2} will be to obtain a useful bound on the cluster functions. First we recall what it means for a point process to be clustering. Let $\phi, \tau : \mathbb{R} \to \mathbb{R}_+$ be fast decreasing functions, such as those that decay faster than any polynomial.
\begin{definition}
\label{def:clustering}
We say that the $k$-point correlation functions of a point process depending on a parameter $N$ are \textit{asymptotically clustering} if for each partition of the set of indices $\{1,\ldots,k\}$ into non-empty disjoint subsets $I$ and $J$, one has for sufficiently large $N$
\begin{equation}
|\rho_{N}(\xi_1,\ldots,\xi_{k})-\rho_{N}(\xi_{I})\rho_{N}(\xi_{J})| \leq \phi(\mathrm{Dist}(\xi_{I},\xi_{J})) + \tau(N)
\end{equation}
where $\xi_{I} = (\xi_{i} : i \in I)$ and $\mathrm{Dist}(\xi_I, \xi_J) = \min_{i \in I, j \in J} \lvert \xi_i - \xi_j \rvert$.
\end{definition}
To obtain estimates of this type in the context of determinantal point processes, the following Lemma is useful, see for example \cite[Section 5.2]{BYY19}.
\begin{lemma}
\label{AGZ_lemma}
Let $K$ and $L$ be $2 \times 2$ matrix kernels. For a positive integer $p$ and any $(x_1, \ldots, x_p)$ we have,
denoting $\norm{K} = \sup_{1 \leq i,j \leq p}\sup_{l, m \in \{1, 2\}}  \lvert [K(x_i,x_j)]_{l, m} \rvert$
\begin{equation}
\bigg{|}\mathrm{det}\{K(x_i, x_j)\}_{i,j=1}^{p} - \mathrm{det}\{L(x_i, x_j)\}_{i,j=1}^{p}\bigg{|} \leq (2p)^{1+p} \norm{K-L} \max(\norm{K},  \norm{L})^{2p-1}.
\end{equation}
\end{lemma}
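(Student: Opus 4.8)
The plan is to forget the $2\times 2$ block structure entirely: viewing the $p\times p$ array of $2\times 2$ blocks as a single $n\times n$ matrix with $n=2p$, write $A=\{K(x_i,x_j)\}_{i,j=1}^{p}$ and $B=\{L(x_i,x_j)\}_{i,j=1}^{p}$, and compare $\det A$ with $\det B$ by a telescoping argument combined with Hadamard's inequality. The first thing I would record is the only use of the hypotheses: since $\norm{\cdot}$ takes a supremum over the block indices $l,m\in\{1,2\}$ as well as over $i,j$, \emph{every} entry of $A$ and of $B$ has modulus at most $\max(\norm{K},\norm{L})$, and every entry of $A-B$ has modulus at most $\norm{K-L}$. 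Thereafter the proof is purely linear algebra and does not see the kernels again.

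Next I would use multilinearity of the determinant in its rows. Writing $a_1,\dots,a_n$ and $b_1,\dots,b_n$ for the rows of $A$ and $B$, replacing the rows of $B$ by those of $A$ one at a time gives the telescoping identity
\begin{equation}
\det A-\det B=\sum_{k=1}^{n}\det\bigl(b_1,\dots,b_{k-1},\,a_k-b_k,\,a_{k+1},\dots,a_n\bigr).
\end{equation}
Hadamard's inequality bounds $\lvert\det M\rvert$ by the product of the Euclidean norms of the rows of $M$. In the $k$-th summand the distinguished row is $a_k-b_k$, whose Euclidean norm is at most $\sqrt{n}\,\norm{K-L}$ (it has $n$ entries, each of modulus at most $\norm{K-L}$), while each of the remaining $n-1$ rows is a row of $A$ or of $B$ and hence has Euclidean norm at most $\sqrt{n}\,\max(\norm{K},\norm{L})$. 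Therefore each of the $n$ summands is bounded by $\sqrt{n}\,\norm{K-L}\,\bigl(\sqrt{n}\,\max(\norm{K},\norm{L})\bigr)^{n-1}=n^{n/2}\,\norm{K-L}\,\max(\norm{K},\norm{L})^{n-1}$.

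Summing the $n$ terms gives $\lvert\det A-\det B\rvert\le n^{1+n/2}\,\norm{K-L}\,\max(\norm{K},\norm{L})^{n-1}$, and substituting $n=2p$ produces exactly the asserted bound $(2p)^{1+p}\,\norm{K-L}\,\max(\norm{K},\norm{L})^{2p-1}$. There is no genuine obstacle here: this is a routine Lipschitz-type stability estimate for the determinant, and the statement is essentially quoted from \cite{BYY19}. The only point worth flagging is that one should split off \emph{rows} and invoke Hadamard rather than expand the determinant as a Leibniz sum over permutations and telescope each monomial separately; the latter is also correct but loses a factor of order $(2p)!$ in place of $(2p)^{p}$, and so would not deliver the clean exponent $1+p$ needed later.
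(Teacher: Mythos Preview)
Your argument is correct and is in fact the standard proof: telescope in the rows and apply Hadamard's inequality, giving exactly $(2p)^{1+p}\norm{K-L}\max(\norm{K},\norm{L})^{2p-1}$. The paper does not supply its own proof at all; it simply cites \cite[Lemma~3.3]{AGZ10}, and the argument there is precisely the one you have written (so your reference to \cite{BYY19} should really be to \cite{AGZ10}).
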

\begin{proof}
See \cite[Lemma 3.3]{AGZ10}.
\end{proof}
We now show that the above Lemma leads to a simple criterion for a Pfaffian point process to have asymptotically clustering $k$-point functions.
\begin{lemma}
\label{le:pfaffcriteria}
Given a Pfaffian point process with $2 \times 2$ matrix kernel $\tilde{k}_{N}(\xi,\zeta)$ defined on a configuration space $\mathcal{E}_{N}$, suppose that for any $i,j \in \{1,2\}$ we have the bound
\begin{equation}
|[\tilde{k}_{N}(\xi,\zeta)]_{i,j}| \leq \phi(|\xi-\zeta|) + \tau(N)
\end{equation}
uniformly in $\xi, \zeta \in \mathcal{E}_{N}$. Then the $k$-point correlation functions corresponding to the kernel $\tilde{k}_{N}$ are asymptotically clustering with fast decreasing functions given by constant multiples of $\phi^{\frac{1}{2}}$ and $\tau^{\frac{1}{2}}$.
\end{lemma}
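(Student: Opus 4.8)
The plan is to reduce the Pfaffian structure to a determinantal one and then apply Lemma \ref{AGZ_lemma}. Recall the standard identity $\mathrm{Pf}(A)^2 = \det(A)$ for an antisymmetric matrix $A$; applied to the antisymmetrised $2k \times 2k$ matrix built from the $2\times 2$ blocks $\tilde{k}_{N}(\xi_i,\xi_j)$, this expresses $\rho^{(k)}_{N}(\xi_1,\ldots,\xi_k) = \mathrm{Pf}\{\tilde{k}_{N}(\xi_i,\xi_j)\}_{i,j=1}^{k}$ as a square root of a determinant of a $2k \times 2k$ matrix $M_N(X)$ whose entries are bounded by $\phi(|\xi_i-\xi_j|) + \tau(N)$ up to a fixed constant. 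The factorised product $\rho^{(|I|)}_N(\xi_I)\rho^{(|J|)}_N(\xi_J)$ is then the square root of the determinant of the block-diagonal matrix $\tilde{M}_N(X)$ obtained from $M_N$ by deleting all entries linking an index in $I$ to an index in $J$.

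The key step is to estimate $|\det M_N(X) - \det \tilde M_N(X)|$ using Lemma \ref{AGZ_lemma} with $p = 2k$ (or rather with the appropriate matrix dimension), taking $K$ to be the kernel giving $M_N$ and $L$ the one giving $\tilde M_N$. Here $\norm{K-L}$ is exactly $\sup$ of the deleted entries, namely those of the form $[\tilde{k}_{N}(\xi_i,\xi_j)]_{l,m}$ with $i \in I$, $j \in J$ (or the transposed pair), which by hypothesis is bounded by $\phi(\mathrm{Dist}(\xi_I,\xi_J)) + \tau(N)$ since $\phi$ is decreasing; and $\max(\norm{K},\norm{L})$ is bounded by the uniform bound $\sup_{\xi,\zeta}(\phi(|\xi-\zeta|)+\tau(N)) \le \phi(0) + \tau(N)$, which is $O(1)$. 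Hence $|\det M_N(X) - \det \tilde M_N(X)| \le C_k\,(\phi(\mathrm{Dist}(\xi_I,\xi_J)) + \tau(N))$ for a constant $C_k$ depending only on $k$.

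Finally I would pass from the determinants back to their square roots via the elementary inequality $|\sqrt{a} - \sqrt{b}| \le \sqrt{|a-b|}$ valid for $a,b \ge 0$ (both determinants are nonnegative, being squares of real Pfaffians of real antisymmetric matrices). This yields
\begin{equation}
|\rho^{(k)}_{N}(\xi_1,\ldots,\xi_k) - \rho^{(|I|)}_{N}(\xi_I)\rho^{(|J|)}_{N}(\xi_J)| \le \sqrt{C_k}\,\big(\phi(\mathrm{Dist}(\xi_I,\xi_J)) + \tau(N)\big)^{1/2},
\end{equation}
and using $\sqrt{a+b} \le \sqrt{a} + \sqrt{b}$ the right-hand side is bounded by a constant multiple of $\phi^{1/2}(\mathrm{Dist}(\xi_I,\xi_J)) + \tau^{1/2}(N)$, which are again fast decreasing. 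This matches Definition \ref{def:clustering} with the advertised functions $\phi^{1/2}$ and $\tau^{1/2}$ (up to constants depending on $k$), completing the proof.

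The main obstacle, or rather the point requiring the most care, is the bookkeeping in the first step: making precise the antisymmetric $2k\times 2k$ matrix $M_N(X)$ whose Pfaffian is $\rho^{(k)}_N$, verifying that its block-diagonal truncation really does compute the product of the lower-order correlation functions (this uses that the correlation functions of a Pfaffian point process with kernel in derived form restrict correctly, i.e. the minor property of Pfaffian point processes), and checking that deleting exactly the cross-block entries corresponds to $\norm{K-L} \le \phi(\mathrm{Dist}(\xi_I,\xi_J)) + \tau(N)$. The analytic content — the inequalities for determinants and square roots — is routine once this combinatorial setup is in place. One should also note the constants produced depend on $k$ but not on $N$ or the configuration, which is all that is needed for the clustering definition.
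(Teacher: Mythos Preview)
Your proposal is correct and follows essentially the same approach as the paper: express the correlation functions as square roots of determinants via $\mathrm{Pf}(A)^2=\det(A)$, realise the product $\rho_N^{(|I|)}\rho_N^{(|J|)}$ as the block-diagonal truncation, apply Lemma~\ref{AGZ_lemma} to bound the difference of determinants by the supremum of the deleted cross-block entries, and finish with the inequality $|\sqrt{a}-\sqrt{b}|\le\sqrt{|a-b|}$. The paper carries this out with the partition $I=\{1,\ldots,p\}$, $J=\{p+1,\ldots,p+q\}$ and the same constant bookkeeping you describe.
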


\begin{proof}
Without loss of generality we consider $k=p+q$ and consider the product of $p$-point and $q$-point correlation functions,
\begin{align*}
\rho_N^{(p)}(\xi_1, \ldots, \xi_{p}) \rho_N^{(q)}(\xi_{p+1}, \ldots, \xi_{p+q})  & = 
\mathrm{pf}(A) \mathrm{pf}(B) \\
 & = \text{pf}\left(\begin{matrix} A & 0 \\
0 & B 
\end{matrix}\right) \\
& = \text{det}^{1/2}\left(\begin{matrix} A & 0 \\
0 & B 
\end{matrix}\right)
\end{align*}
where $A$ is the $2p \times 2p$ matrix $(\tilde{k}_{N}(\xi_i, \xi_j):1\leq i, j \leq p)$ and $B$ is the 
$2q \times 2q$ matrix $(\tilde{k}_{N}(\xi_i, \xi_j):p+1\leq i, j \leq p+q)$. 

For all $(\xi_1, \ldots, \xi_{p+q}) \in \mathcal{E}_n^{p+q}$,
\begin{align*}
& \lvert \rho_N^{(p+q)}(\xi_1, \ldots, \xi_{p+q}) - \rho_N^{(p)}(\xi_1, \ldots, \xi_{p}) \rho_N^{(q)}(\xi_{p+1}, \ldots, \xi_{p+q})  \rvert \\
& =\bigg{\lvert} \text{det}^{1/2}(\tilde{k}_{N}(\xi_i, \xi_j):1\leq i, j \leq p+q) - \text{det}^{1/2}\left(\begin{matrix} A & 0 \\
0 & B 
\end{matrix}\right)
 \bigg{\rvert} \\
& \leq \bigg \lvert \text{det}(\tilde{k}_{N}(\xi_i, \xi_j):1\leq i, j \leq p+q) - \text{det}\left(\begin{matrix} A & 0 \\
0 & B 
\end{matrix}\right)
 \bigg\rvert^{1/2} \\
 & \leq C_{k}
 \sup_{(i,j) \in \{1,\ldots,p\} \times \{p+1,\ldots,k\} \cup \{p+1,\ldots,k\} \times \{1,\ldots,p\} }
\sup_{l,m \in \{1,2\}}|(\tilde{k}_{N}(\xi_i, \xi_j))_{l,m}|^{\frac{1}{2}} \\
& \leq C_{k}\phi(\mathrm{Dist}(\{\xi_1,\ldots,\xi_p\},\{\xi_{p+1},\ldots,\xi_k\}))^{\frac{1}{2}} + C_{k}\tau(N)^{\frac{1}{2}}
\end{align*}
by using Lemma \ref{AGZ_lemma} with constant $C_{k}=(2k)^{\frac{k+1}{2}}\sup_{l,m \in \{1,2\}}\sup_{i,j \in \{1,\ldots,k\}}|\tilde{k}_{N}(\xi_i, \xi_j)_{l,m}
|^{k - \frac{1}{2}} < \infty$. This establishes the required decay with fast decreasing functions $C_k \phi^{1/2}$ and $C_k \tau^{1/2}$.
\end{proof}

\begin{corollary}
\label{corx}
Consider a Pfaffian point process with asymptotically clustering and bounded $k$-point correlation functions with the fast decreasing functions from Definition \ref{def:clustering} given by constant multiples of $\phi^{1/2}$ and $\tau^{1/2}$.  
Then there exist constants $c_{k}, \kappa_k >0$ such that for any configuration $(\xi_1,\ldots,\xi_k)$ the corresponding cluster functions satisfy
\begin{equation}
|r^{(k)}_{N}(\xi_1,\ldots,\xi_{k})| \leq \kappa_k \phi^{\frac{1}{2}}(c_{k}\,\mathrm{diam}(\xi_1,\ldots,\xi_k))+ \kappa_k \tau^{\frac{1}{2}}(N)
\end{equation}
where $\mathrm{diam}(\xi_1,\ldots,\xi_k) = \mathrm{max}_{1 \leq i,j \leq k}|\xi_j-\xi_i|$.
\end{corollary}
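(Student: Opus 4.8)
The plan is to prove the estimate by induction on $k$, exploiting that the formula in Definition~\ref{def:cluster} inverts, by M\"obius inversion on the partition lattice, to the dual identity
\[
\rho^{(k)}_N(X_{[k]}) = \sum_{\pi \in \Pi(k)} \prod_{B \in \pi} r^{(|B|)}_N(X_B), \qquad X_{[k]} = (\xi_1,\dots,\xi_k),\quad X_B = (\xi_i : i \in B)
\]
which I would take as the starting point. A first observation is that boundedness of the correlation functions forces each $r^{(m)}_N$ to be bounded by a constant $\tilde{\kappa}_m$, being a finite sum of products of bounded functions; this will be used for the ``spectator'' blocks below. The base cases are immediate: $r^{(1)}_N = \rho^{(1)}_N$ is bounded and the diameter of a single point vanishes, while $r^{(2)}_N(\xi_1,\xi_2) = \rho^{(2)}_N(\xi_1,\xi_2) - \rho^{(1)}_N(\xi_1)\rho^{(1)}_N(\xi_2)$ is precisely the quantity controlled by the clustering hypothesis applied to the split $I = \{1\}$, $J = \{2\}$.

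For the inductive step with $k \ge 3$, the first move is to convert a diameter bound into a gap bound: reorder the points and note that one of the $k-1$ consecutive gaps of the ordered configuration has length at least $\mathrm{diam}(\xi_1,\dots,\xi_k)/(k-1)$; splitting the index set at that gap produces nonempty blocks $I$ and $J$ with $\mathrm{Dist}(X_I,X_J) \ge \mathrm{diam}(\xi_1,\dots,\xi_k)/(k-1)$. The crux of the argument is then the identity, valid for any nontrivial split $I \sqcup J = \{1,\dots,k\}$,
\[
r^{(k)}_N(X_{[k]}) = \Bigl[\rho^{(k)}_N(X_{[k]}) - \rho^{(|I|)}_N(X_I)\,\rho^{(|J|)}_N(X_J)\Bigr] - \sum_{\substack{\pi \in \Pi(k),\ \pi \neq \{1,\dots,k\} \\ \text{some block of $\pi$ meets $I$ and $J$}}} \prod_{B \in \pi} r^{(|B|)}_N(X_B),
\]
which I would obtain from the dual identity by isolating the one-block partition $\{1,\dots,k\}$ and observing that the partitions refining $\{I,J\}$ are exactly the pairs in $\Pi(I)\times\Pi(J)$, whose contribution to the dual identity re-assembles to $\rho^{(|I|)}_N(X_I)\rho^{(|J|)}_N(X_J)$.

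Granting this identity, the estimate follows quickly. The bracketed term is bounded by $C\,\phi^{1/2}(\mathrm{Dist}(X_I,X_J)) + C\,\tau^{1/2}(N)$ by the clustering hypothesis. In each of the remaining summands, finitely many in number (at most the $k$-th Bell number), pick one block $B_0$ meeting both $I$ and $J$; since $\pi \neq \{1,\dots,k\}$ one has $2 \le |B_0| \le k-1$, and since $B_0$ meets both $I$ and $J$ one has $\mathrm{diam}(X_{B_0}) \ge \mathrm{Dist}(X_I,X_J)$, so the inductive hypothesis controls $|r^{(|B_0|)}_N(X_{B_0})|$ while the remaining blocks contribute bounded factors $\tilde{\kappa}_{|B|}$. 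Replacing $\phi$ by its non-increasing majorant (which preserves fast decrease) and using $\mathrm{Dist}(X_I,X_J) \ge \mathrm{diam}(\xi_1,\dots,\xi_k)/(k-1)$, every occurrence of $\phi^{1/2}$ is dominated by $\phi^{1/2}(c_k\,\mathrm{diam}(\xi_1,\dots,\xi_k))$ with $c_k := (k-1)^{-1}\min_{2 \le m \le k-1} c_m \le (k-1)^{-1}$; collecting constants yields the claim with a suitable $\kappa_k$ depending only on $k$, the uniform bounds on the correlation functions, and the implicit constants in the clustering hypothesis. The one genuinely non-routine step is establishing the combinatorial identity above; once it is in hand, the remainder is a routine propagation of constants through the induction.
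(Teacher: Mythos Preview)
Your proof is correct and follows essentially the same strategy as the paper's, which in turn summarises the argument of \cite[Claim~4.1]{NS12}: both hinge on the identity expressing $r^{(k)}_N$ as the clustering difference $\rho^{(k)}_N - \rho^{(|I|)}_N\rho^{(|J|)}_N$ plus a signed sum over partitions with a block straddling $I$ and $J$, then proceed by induction using that straddling block. Your write-up is in fact more explicit than the paper's sketch on several points---deriving the key identity from M\"obius inversion, constructing the split $I,J$ via the largest consecutive gap to get $c_k = (k-1)^{-1}$, and noting that $\phi$ should be replaced by its non-increasing majorant to justify the monotonicity step---none of which the paper spells out.
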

\begin{proof}
The proof is identical to the one given in \cite[Proof of Claim 4.1]{NS12}. For ease of reading we summarise the key points. The relationship between correlation and cluster functions means that for any 
partition of $\{1, \ldots, k\}$ into non-empty subsets $I$ and $J$ we have 
\begin{equation}
\label{ind_ursell}
r_N^{(k)}(\xi_1, \ldots, \xi_k) = \rho_N^{(k)}(\xi_1, \ldots, \xi_k) - \rho_N^{(\lvert I \rvert)}(\xi_i : i \in I) \rho_N^{(\lvert J \rvert)}(\xi_i : i \in J) + \sum_{\pi \in \Pi^*} \prod_{i=1}^{n} r_N^{(p_i)}(\xi_{p_i})
\end{equation}
where $\pi$ is a partition with $n$ blocks $\{\pi_1, \ldots, \pi_n\}$ of lengths $p_i = \lvert \pi_i \rvert$ for $i =1, \ldots, n$ and $\xi_{p_j} = (\xi_i : i \in \pi_j).$ The sum is taken over the restricted set of partitions that mix $I$ and $J$, meaning $\Pi^*$ is the set of partitions of $\{1, \ldots, k\}$ such that at least one block in $\Pi^*$ contains an element from both $I$ and $J$. 
Then using asymptotic clustering of correlation functions 
\begin{align*}
 \rho_N^{(k)}(\xi_1, \ldots, \xi_k) - \rho_N^{(\lvert I \rvert)}(\xi_i : i \in I) \rho_N^{(\lvert J \rvert)}(\xi_i : i \in J)
 & \leq  C_k \phi^{\frac{1}{2}}(\mathrm{Dist}((\xi_i : i \in I), (\xi_i : i \in J))+ C_k \tau^{\frac{1}{2}}(N). 
\end{align*}
We proceed inductively as the final term in \eqref{ind_ursell} involves partitions which mix $I$ and $J$, thus the same argument applies to the cluster functions corresponding to a block with an element from both $I$ and $J$. All the remaining cluster functions are bounded. Therefore an inductive argument establishes that
\begin{equation}
|r^{(k)}_{N}(\xi_1,\ldots,\xi_{k})| \leq \kappa_k \phi^{\frac{1}{2}}(\mathrm{Dist}((\xi_i : i \in I), (\xi_i : i \in J))+ \kappa_k \tau^{\frac{1}{2}}(N).
\end{equation}
The proof is completed since there exists $c_k > 0$ such that for any $(\xi_1, \ldots, \xi_k)$ we can choose $I$ and $J$ such that the inequality $\mathrm{Dist}((\xi_i : i \in I), (\xi_i : i \in J)) \geq c_k \mathrm{diam}(\xi_1,\ldots,\xi_k)$ holds. 
\end{proof}

\subsection{Bound on the higher cumulants and proof of Theorem \ref{th:ginconv}}
\label{se:cumulantbound}
The purpose of this section is to prove the following.
\begin{theorem}
\label{th:cumulantbnd}
For positive integers $n$, $p_1,\ldots,p_n$ and a function $f \in L^{\infty}([-1,1])$, there is a constant $C>0$ such that 
\begin{equation}
N^{\frac{n}{2}}\bigg{|}\int_{\mathcal{E}_{N}^{n}}\prod_{j=1}^{n}dx_{j}\,\rho(x_j)f(x_j)^{p_j}\,\tilde{r}^{(n)}_{N}(x_1,\ldots,x_n)\bigg{|} \leq C\sqrt{N} \label{clusterint}
\end{equation}
and consequently the $k^{\mathrm{th}}$ cumulant \eqref{cexpr2} is of order $C_{k}(f) = O(\sqrt{N})$ as $N \to \infty$.
\end{theorem}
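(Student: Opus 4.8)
The plan is to combine the substitution $x_j\mapsto x_j^m$ — which trivialises the singular weight $\rho$ — with the decay of the modified kernel $\tilde K_N$ inherited from Theorem \ref{th:globintro}, feeding the result into the clustering bound for cluster functions, Corollary \ref{corx}. First I would split the domain $\mathcal E_N^n$ in \eqref{clusterint} according to the signs of the $n$ coordinates. On any block containing points of both signs, let $I$ and $J$ be the sets of positive and negative indices; by the last assertion of Theorem \ref{th:globintro} every entry of $\tilde K_N(x_i,x_j)$ with $i\in I$, $j\in J$ is $O(e^{-cN^\epsilon})$, so running the argument of Corollary \ref{corx} with this choice of $I,J$ (equivalently, taking $L$ in Lemma \ref{AGZ_lemma} to be $\tilde K_N$ with these couplings deleted) gives $|\tilde r^{(n)}_N|=O(e^{-cN^\epsilon})$ on such configurations; multiplied by $N^{n/2}$ and integrated over the bounded set $\mathcal E_N^n\subseteq[-1,1]^n$ these blocks contribute $o(1)$. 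On the all-positive block (the all-negative block is handled identically using the symmetries $S_N(-x,-y)=S_N(x,y)$, $D_N(-x,-y)=-D_N(x,y)$, $I_N(-x,-y)=-I_N(x,y)$), substitute $x_j=u_j^m$; since $\rho(u^m)\,m u^{m-1}=\tfrac12$, the weights $\rho(x_j)$ in \eqref{cexpr2} are absorbed into $\prod_j\tfrac12\,du_j$, and the $u_j$ now range over $E_N^{1/m}$, the set on which Theorem \ref{th:globintro} gives uniform asymptotics.

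Next I would record the uniform bound on the rescaled kernel $\hat k_N(u,v):=\tilde K_N(u^m,v^m)$. Using \eqref{Snxyintro} for the off-diagonal entries, \eqref{Dnxyintro} together with $t\,e^{-t^2/2}\le C$ for the $(1,1)$ entry, and \eqref{Inxyintro} together with $\mathrm{erfc}(t)\le C e^{-t^2/2}$ for the $(2,2)$ entry, and keeping track of the prefactors $N^{-1/2}\rho(x)^{-1}=2mN^{-1/2}u^{m-1}$ appearing in \eqref{tildekn}, one finds that all four entries of $\hat k_N(u,v)$ are $O(1)$ and that, uniformly on $u,v\in E_N^{1/m}$,
\[
\bigl|[\hat k_N(u,v)]_{\ell,q}\bigr|\ \le\ \Phi\bigl(\sqrt{Nm}\,|u-v|\bigr)+C e^{-cN^\epsilon}
\]
for a single fixed fast-decreasing function $\Phi$ of Gaussian type; the normalisations built into \eqref{tildekn} are precisely what put all the decay on the common scale $|u-v|\sim N^{-1/2}$. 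Hence all correlation functions of the rescaled Pfaffian process are uniformly bounded, so Lemma \ref{le:pfaffcriteria} and then Corollary \ref{corx} apply with $\phi(t)=\Phi(\sqrt{Nm}\,t)$ and $\tau(N)=Ce^{-cN^\epsilon}$, yielding
\[
\bigl|\tilde r^{(n)}_N(u_1^m,\ldots,u_n^m)\bigr|\ \le\ \kappa_n\,\Phi^{1/2}\!\bigl(c_n\sqrt{Nm}\,\mathrm{diam}(u_1,\ldots,u_n)\bigr)+\kappa_n\,C e^{-cN^\epsilon/2}.
\]

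It then remains to estimate $N^{n/2}\int_{[0,1]^n}\prod_j du_j\,\Phi^{1/2}\bigl(c_n\sqrt{Nm}\,\mathrm{diam}(u)\bigr)$, the $f$-factors being bounded by $\|f\|_\infty^{p_1+\cdots+p_n}$ and the $e^{-cN^\epsilon/2}$-term contributing $o(1)$ after multiplication by $N^{n/2}$. Ordering the $u_j$ at the cost of a factor $n!$ and passing to the gap variables $v_1=u_{(1)}$, $v_j=u_{(j)}-u_{(j-1)}$ for $j\ge 2$, one has $\mathrm{diam}(u)=\sum_{j\ge 2}v_j\ge v_j$ for each $j$, hence $\Phi^{1/2}\bigl(c_n\sqrt{Nm}\,\mathrm{diam}(u)\bigr)\le\prod_{j=2}^{n}\Phi^{1/(2(n-1))}\bigl(c_n\sqrt{Nm}\,v_j\bigr)$. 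The $v_1$-integral over $[0,1]$ is $O(1)$, while each of the $n-1$ gap integrals equals $\int_0^\infty\Phi^{1/(2(n-1))}\bigl(c_n\sqrt{Nm}\,v\bigr)\,dv=O(N^{-1/2})$, since a positive power of a fast-decreasing function is again fast-decreasing. Thus the integral is $O(N^{-(n-1)/2})$, the whole expression is $O\bigl(N^{n/2}\cdot N^{-(n-1)/2}\bigr)=O(\sqrt N)$, which is \eqref{clusterint}; summing \eqref{cexpr2} over the finitely many partitions of $\{1,\ldots,k\}$ gives $C_k(f)=O(\sqrt N)$.

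The step I expect to be the main obstacle is the uniform kernel estimate of the second paragraph: one must verify that the specific prefactors chosen in \eqref{tildekn} genuinely render all four matrix entries $O(1)$ with their decay concentrated on the single scale $N^{-1/2}$, uniformly over all of $\mathcal E_N$, and handle the fact that $S_N$, $D_N$, $I_N$ are not functions of $|x-y|$ in the original variables — this is exactly what the substitution $x=u^m$ and the $(1+o(1))+O(e^{-N^\epsilon})$ form of Theorem \ref{th:globintro} are designed to repair. The remaining (elementary) point requiring care is the bookkeeping in the third paragraph: exactly one of the $n$ integration variables, the centre of mass, is unconstrained, so that $n-1$ of the $n$ integrations each save a factor $N^{-1/2}$, and this is precisely what converts the naive $N^{n/2}$ into $N^{1/2}$ uniformly in $n$ — without this observation one would only obtain the useless bound $O(N^{n/2})$.
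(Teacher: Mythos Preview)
Your proposal is correct and follows essentially the same route as the paper: split by signs, pass to the variables $u$ with $x=u^m$ so that $\rho(x)\,dx=\tfrac12\,du$, use Theorem~\ref{th:globintro} to obtain a uniform Gaussian-type decay of the rescaled kernel, feed this into Lemma~\ref{le:pfaffcriteria} and Corollary~\ref{corx} to bound the cluster function by a fast-decreasing function of the diameter, and then observe that $n-1$ of the $n$ integrations each contribute a factor $N^{-1/2}$. The only cosmetic difference is that the paper absorbs the $N^{-1/2}$ rescaling into the change of variable itself (setting $x_j=(\xi_j/\sqrt{N})^m$, so the kernel decay becomes $N$-independent and the final $\sqrt{N}$ appears as the length of the $\xi_n$-range), and replaces your gap-variable product bound by a dyadic decomposition $\{l-1\le\mathrm{diam}\le l\}$; both devices accomplish the same counting.
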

\begin{proof}
Writing $\mathcal{E}_{N} = (-E_{N})\cup E_{N}$, we decompose the integral in \eqref{clusterint} into $2^{n}$ domains comprising of positive and negative parts of $\mathcal{E}_{N}$. Without loss of generality we suppose that the first $q$ coordinates are positive and the last $n-q$ are negative, with $q=0,\ldots,n$. Then it suffices to estimate the integral
\begin{equation}
I_{N,q}(f) = N^{\frac{n}{2}}\int_{E_{N}^{q} \times (-E_{N})^{n-q}}\prod_{j=1}^{n}dx_{j}\,\rho(x_j)f(x_j)^{p_j}\,\tilde{r}^{(n)}_{N}(x_1,\ldots,x_n).
\end{equation}
To make contact with Theorem \ref{th:globintro} we make the change of variable
\begin{equation}
x_{j} = s_{\xi_j}\left(\frac{|\xi_{j}|}{\sqrt{N}}\right)^{m},\qquad j=1,\ldots,m \label{chvar}
\end{equation}
where $s_{\xi} = \mathrm{sgn}(\xi)$. We define a corresponding new domain $F_{N} = \sqrt{N}E_{N}^{\frac{1}{m}}$ and
 $\tilde{F}_{N} = F_{N}\cup(-F_{N})$. We also define 
$h(\xi) = f\left(s_{\xi}\left(\frac{\lvert \xi \rvert}{\sqrt{N}}\right)^{m}\right)$. 
Finally, let $R^{(n)}_{N}$ be the cluster function defined by the new coordinates \eqref{chvar}, i.e. replacing all variables $x_{j}$ in $\tilde{r}^{(n)}_{N}$ with $\xi_{j}$ according to \eqref{chvar}. 

 The Jacobian of transformation \eqref{chvar} satisfies $2dx_{j}\,\rho(x_j) = \frac{d\xi_{j}}{\sqrt{N}}$ and we arrive at
\begin{align}
|I_{N,q}(f)| &\leq C\int_{F_{N}^{q}\times (-F_{N})^{n-q}}\prod_{j=1}^{n}d\xi_{j}\,|h^{p_j}(\xi_{j})|\,|R^{(n)}_{N}(\xi_1,\ldots,\xi_n)|\\
&\leq C\norm{f}_{\infty}^{n}\sqrt{N}\sup_{\xi_{n} \in \tilde{F}_{N}}\int_{\tilde{F}_{N}^{n-1}}d\xi_{1}\ldots d\xi_{n-1}\,|R^{(n)}_{N}(\xi_1,\ldots,\xi_n)|. \label{2ndbnd}
\end{align}
In the new variables \eqref{chvar} the correlation kernel $\tilde{K}_{N}(x_1,x_2)$ becomes
\begin{equation}
\tilde{k}_{N}(\xi_1,\xi_2) = \tilde{K}_{N}\left(s_{\xi_1}\left(\frac{|\xi_{1}|}{\sqrt{N}}\right)^{m},s_{\xi_2}\left(\frac{|\xi_{2}|}{\sqrt{N}}\right)^{m}\right).
\end{equation}
Then from the definition \eqref{tildekn} of $\tilde{K}_{N}$ and Theorem \ref{th:globintro} we see that the transformed kernel $\tilde{k}_{N}(\xi_1,\xi_2)$ satisfies the following exponential decay, uniformly on $\tilde{F}_{N} \times \tilde{F}_{N}$:
\begin{equation}
\sup_{i,j \in \{1,2\}}\,|[\tilde{k}_{N}(\xi_1,\xi_2)]_{i,j}| \leq Ce^{-c|\xi_1-\xi_2|^{2}} + O(e^{-N^{\epsilon}}). \label{expdecayktilde}
\end{equation}
By Lemma \ref{le:pfaffcriteria} this implies that the $k$-point correlation functions corresponding to the kernel $\tilde{k}_{N}(\xi,\zeta)$ are asymptotically clustering. Note that \eqref{expdecayktilde} also implies that the $k$-point correlation functions are uniformly bounded. Then Corollary \ref{corx} gives the bound
\begin{equation}
|R^{(n)}_{N}(\xi_1,\ldots,\xi_n)| \leq Ce^{-c\mathrm{diam}(\vec{\xi})^{2}} + O(e^{-N^{\epsilon}})\label{diambnd}.
\end{equation}
To finish the proof, we decompose $\mathbb{R}^{n-1} = \bigcup_{l=1}^{\infty}G_{l}$ where
\begin{equation}
G_{l} = \bigg\{(\xi_1,\ldots,\xi_{n-1}) \in \mathbb{R}^{n-1} : l-1 \leq \mathrm{diam}(\xi_1,\ldots,\xi_{n}) \leq l\bigg\}.
\end{equation}
Note that $|G_{l}| \leq (2l)^{n-1}$ and $|R_{N}(\xi_1,\ldots,\xi_{n})| \leq Ce^{-c(l-1)^{2}}$ for $(\xi_1,\ldots,\xi_{n-1}) \in G_{l}$. Then the supremum in \eqref{2ndbnd} is bounded by
\begin{equation}
C\sum_{l=1}^{\infty}(2l)^{n-1}e^{-c(l-1)^{2}} + O(|F_{N}|^{n}e^{-N^{\epsilon}}) < \infty.
\end{equation}
This completes the proof of Theorem \ref{th:cumulantbnd}.
\end{proof}
We are now ready to prove Theorems \ref{th:ginconv} and \ref{th:ginconvmeso}.
\begin{proof}[Proof of Theorems \ref{th:ginconv} and \ref{th:ginconvmeso}]
Normalising \eqref{xidef} by the appropriate power of $N$, we have
\begin{equation}
\frac{\xi_{N,m}(f)}{N^{\frac{1}{4}}} = \frac{\tilde{\xi}_{N,m}(f)}{N^{\frac{1}{4}}} + \frac{\tilde{\xi}^{\mathrm{c}}_{N,m}(f)}{N^{\frac{1}{4}}}. \label{xnf}
\end{equation}
By Theorem \ref{th:cumulantbnd} and the limiting formula \eqref{varreals} for the variance, we have that Theorem \ref{th:ginconv} holds for the truncated statistic $\tilde{\xi}_{N,m}(f)$. By Lemma \ref{le:l1est} the second term in \eqref{xnf} tends to zero in $L^{1}$ and hence can be neglected for the purposes of distributional convergence (this is sometimes referred to as Slutsky's theorem). This completes the proof of Theorem \ref{th:ginconv}. The proof of Theorem \ref{th:ginconvmeso} is analogous; we write
\begin{equation}
\frac{\xi^{(\tau)}_{N,m}(f)}{N^{\frac{1-2\tau}{4}}} = \frac{\tilde{\xi}^{(\tau)}_{N,m}(f)}{N^{\frac{1-2\tau}{4}}} + \frac{\tilde{\xi}^{(\tau),\mathrm{c}}_{N,m}(f)}{N^{\frac{1-2\tau}{4}}}. \label{xnfmeso}
\end{equation}
Then the second term of \eqref{xnfmeso} again tends to zero in $L^{1}$, where this time we have to choose $\epsilon>0$ sufficiently small for a given $\tau$ in Lemma \ref{le:l1est}, at least so that $0 < \epsilon < \frac{1-2\tau}{4}$. For the first term of \eqref{xnfmeso} we have the limiting variance given in \eqref{mesovarform}. For the higher cumulants, since $f$ is uniformly bounded the same estimates of Theorem \ref{th:cumulantbnd} apply as for the $\tau=0$ case. In other words we have that the $k^{\mathrm{th}}$ cumulant of $N^{-\frac{1-2\tau}{4}}\tilde{\xi}^{(\tau)}_{N,m}(f)$ is of order $O\left(N^{\frac{1}{2}-k\left(\frac{1-2\tau}{4}\right)}\right)$ as $N\to \infty$. This implies that for all $0 < \tau < \frac{1}{2}$, there exists a $K$ such that for all $k \geq K$, the $k^{\mathrm{th}}$ cumulant of $N^{\frac{1-2\tau}{4}}\tilde{\xi}^{(\tau)}_{N,m}(f)$ tends to zero as $N \to \infty$. Then the convergence \eqref{normalconvmeso} follows from Marcinkiewicz's theorem \cite{M39}.
\end{proof}
We also state here another central limit theorem not covered by Theorem \ref{th:ginconvmeso}. It pertains to the case $E=0$ of \eqref{mesolinstat}.
\begin{theorem}
\label{th:mesozero}
Let $E=0$ in \eqref{mesolinstat} and assume $0 < \tau < \frac{m}{2}$ and that the regularity assumptions of Theorem \ref{th:ginconvmeso} hold. Then we have the convergence in distribution to a normal random variable,
\begin{equation}
\frac{\xi^{(\tau)}_{N,m}(f)-\mathbb{E}(\xi^{(\tau)}_{N,m}(f))}{N^{\frac{1}{4}-\frac{\tau}{2m}}} \overset{d}{\longrightarrow} \mathcal{N}(0,\sigma_{0}^{2}(f)), \qquad N \to \infty, \label{normalconvmesoorigin}
\end{equation}
with limiting variance
\begin{equation}
\sigma_{0}^{2}(f) = \frac{1}{\sqrt{2m\pi}}\,(2-\sqrt{2})\,\int_{-\infty}^{\infty}dx\,f(x)^{2}|x|^{-1+\frac{1}{m}}.
\end{equation}
\end{theorem}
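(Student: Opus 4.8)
The plan is to run, essentially verbatim, the three-step scheme used for Theorems~\ref{th:ginconv} and \ref{th:ginconvmeso} --- a truncation onto $\mathcal{E}_{N}$, a variance computation, and a bound on the higher cumulants followed by Marcinkiewicz's theorem --- since almost all of the required apparatus is already in place. The variance is immediate: for $0<\tau<\tfrac m2$ and under the regularity assumed in Theorem~\ref{th:ginconvmeso}, Proposition~\ref{prop:var} gives \eqref{meso0varform}, i.e.\ $\mathrm{Var}\big(\tilde\xi^{(\tau)}_{N,m}(f)\big)\sim N^{\frac12-\frac{\tau}{m}}\sigma_{0}^{2}(f)$, and $N^{\frac14-\frac{\tau}{2m}}$ is exactly the square root of this rate, as in \eqref{normalconvmesoorigin}. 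So only the truncation and the cumulant bound remain.

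For the truncation, decompose $\xi^{(\tau)}_{N,m}(f)=\tilde\xi^{(\tau)}_{N,m}(f)+\tilde\xi^{(\tau),\mathrm c}_{N,m}(f)$ as in \eqref{xidef}. Since $\sup_{x}\{|f(N^{\tau}x)|e^{-c|x|^{2/m}}\}\le\|f\|_{\infty}$ uniformly in $N$, Lemma~\ref{le:l1est} applies to the test function $f(N^{\tau}\,\cdot\,)$ and yields $\mathbb{E}\big(|\tilde\xi^{(\tau),\mathrm c}_{N,m}(f)|\big)=O(N^{\epsilon})$; choosing $\epsilon>0$ with $\epsilon<\tfrac14-\tfrac{\tau}{2m}$ --- possible precisely because $\tau<\tfrac m2$ --- this is $o(N^{\frac14-\frac{\tau}{2m}})$, so by Slutsky's theorem the complementary term may be discarded. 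It is also the constraint $\tau<\tfrac m2$ that makes the excised window $\{|x|\le N^{-\frac m2+\epsilon}\}$ near the origin asymptotically negligible relative to the effective support $\{|x|\lesssim N^{-\tau}\}$ of the test function, which is what prevents \eqref{meso0varform} from degenerating.

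For the higher cumulants, use the cluster-function representation \eqref{cexpr2}. Since $x\mapsto f(N^{\tau}x)$ lies in $L^{\infty}([-1,1])$ with norm independent of $N$, Theorem~\ref{th:cumulantbnd} gives directly $C_{k}(f)=O(\sqrt N)$, hence the $k$-th cumulant of the normalised statistic $N^{-(\frac14-\frac{\tau}{2m})}\tilde\xi^{(\tau)}_{N,m}(f)$ is $O\big(N^{\frac12-k(\frac14-\frac{\tau}{2m})}\big)$, which tends to $0$ once $k(\tfrac14-\tfrac{\tau}{2m})>\tfrac12$, i.e.\ for all $k\ge K$ with $K=\lceil\tfrac{2m}{m-2\tau}\rceil+1<\infty$; together with the variance limit, Marcinkiewicz's theorem \cite{M39} then yields \eqref{normalconvmesoorigin}. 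If one wants to see that \emph{all} cumulants of order $\ge3$ vanish, the estimate on $C_{k}(f)$ can be sharpened to $O(N^{\frac12-\frac\tau m})$ by keeping the test function inside the integral in the proof of Theorem~\ref{th:cumulantbnd}: after the change of variables \eqref{chvar} it becomes $h(\xi)=f\big(\mathrm{sgn}(\xi)\,N^{\tau-m/2}|\xi|^{m}\big)$, with $\int_{\mathbb R}|h(\xi)|^{p}\,d\xi=O(N^{\frac12-\frac\tau m})$ via the substitution $v=N^{\tau-m/2}|\xi|^{m}$ and $\int|f(v)|^{p}|v|^{\frac1m-1}dv<\infty$ (using that $f$ is bounded near $0$ and square-decaying at $\infty$, at least for $m\ge2$), while \eqref{expdecayktilde} and Corollary~\ref{corx} make the remaining $n-1$ integrations $O(1)$; the $k$-th cumulant of the normalised statistic then has exponent $(\tfrac12-\tfrac k4)+\tfrac{\tau}{m}(\tfrac k2-1)$, strictly negative for every $k\ge3$ exactly when $\tau<\tfrac m2$.

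I do not expect a genuinely new hard step: Theorem~\ref{th:mesozero} follows from the machinery built for Theorems~\ref{th:ginconv}--\ref{th:ginconvmeso}, the only fresh ingredient being the variance formula \eqref{meso0varform}, which is already established. What needs care is the bookkeeping of the smallness conditions on $\epsilon$ (jointly satisfiable iff $\tau<\tfrac m2$) and, for the sharpened cumulant bound, the integrability of $|f(v)|^{p}|v|^{\frac1m-1}$ at both ends, which uses the local boundedness and the tail decay of $f$. The bound $\tau<\tfrac m2$ is sharp: at $\tau=\tfrac m2$ the statistic resolves the origin scaling limit of Theorem~\ref{th:origkern}, where Gaussian behaviour is not expected.
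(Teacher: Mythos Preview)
Your proposal is correct and follows essentially the same approach as the paper: the paper's own proof is a two-sentence invocation of the variance formula \eqref{meso0varform}, the cumulant bound \eqref{clusterint} from Theorem~\ref{th:cumulantbnd}, and Marcinkiewicz's theorem, exactly as you outline. Your write-up simply makes explicit the bookkeeping (the choice $\epsilon<\tfrac14-\tfrac{\tau}{2m}$ for the truncation, the explicit value of $K$) that the paper leaves implicit, and your optional sharpened cumulant estimate is a pleasant but unnecessary extra.
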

\begin{proof}
The proof follows exactly as for Theorem \ref{th:ginconvmeso}, except one uses the variance formula  \eqref{meso0varform}. Then the same estimates on the cumulants \eqref{clusterint} and Marcinkiewicz's theorem completes the proof.
\end{proof}

\section{Fluctuations and correlations at the edge}
\label{se:edge}
In this Section we consider the scaled kernels at the edge. In terms of the kernel $K_{N}(x,y)$ of Theorem \ref{th:ik} and its entries given by \eqref{prekernel}, \eqref{dkernintro} and \eqref{ikernintro}, we define
\begin{align}
s^{\mathrm{edge}}_{N}(\xi,\zeta) &= \frac{1}{2\sqrt{Nm}\rho(1)}\,S_{N}\left(1+\frac{\xi}{2\sqrt{Nm}\rho(1)},1+\frac{\zeta}{2\sqrt{Nm}\rho(1)}\right), \label{snedgeform}\\
d^{\mathrm{edge}}_{N}(\xi,\zeta) &= \frac{1}{4Nm\rho(1)^{2}}\,D_{N}\left(1+\frac{\xi}{2\sqrt{Nm}\rho(1)},1+\frac{\zeta}{2\sqrt{Nm}\rho(1)}\right), \label{dnedgeform}\\
i^{\mathrm{edge}}_{N}(\xi,\zeta) &= I_{N}\left(1+\frac{\xi}{2\sqrt{Nm}\rho(1)},1+\frac{\zeta}{2\sqrt{Nm}\rho(1)}\right).\label{inedgeform}
\end{align}
The density at the edge is $\rho(1) = \frac{1}{2m}\,|E|^{\frac{1}{m}-1}\bigg{|}_{E=1} = \frac{1}{2m}$ and we interpret $\sqrt{Nm}$ as the asymptotic number of particles. Let us now construct the corresponding limits. We have
\begin{align}
s_{\infty}^{\mathrm{edge}}(\xi,\zeta) &:= \frac{1}{2\sqrt{2\pi}}\,e^{-\frac{1}{2}(\xi-\zeta)^{2}}\mathrm{erfc}\left(\frac{\xi+\zeta}{\sqrt{2}}\right)+\frac{1}{4\sqrt{\pi}}\,e^{-\xi^{2}}\mathrm{erfc}(-\zeta), \label{sedgeform}\\
d_{\infty}^{\mathrm{edge}}(\xi,\zeta) &:= \frac{1}{2\sqrt{2\pi}}\,(\zeta-\xi)e^{-\frac{1}{2}(\xi-\zeta)^{2}}\mathrm{erfc}\left(\frac{\xi+\zeta}{\sqrt{2}}\right), \label{dedgeform}\\
i_{\infty}^{\mathrm{edge}}(\xi,\zeta) &:= \int_{\xi}^{\zeta}dt\,s^{\mathrm{edge}}(t,\zeta) + \frac{1}{2}\mathrm{sgn}(\xi-\zeta). \label{iedgeform}
\end{align}
The main goal of this Section will be to show that the scaled finite-$N$ kernels above in \eqref{snedgeform}, \eqref{dnedgeform} and \eqref{inedgeform} converge to their respective limits \eqref{sedgeform}, \eqref{dedgeform} and \eqref{iedgeform} in a suitably strong way. By showing that this convergence holds uniformly on intervals of unbounded height, we will be able to transfer this knowledge onto a convergence result for the largest real eigenvalue. Such a strategy was also employed for the largest eigenvalue of the GOE, see in particular \cite[Theorem 3.9.24]{AGZ10} and surrounding discussion.

One difficulty to overcome can be observed at the level of the limiting kernels above, particularly in \eqref{sedgeform}. From these expressions it is not immediately clear that the correlation functions decay rapidly for large values of their argument. For example when $\xi$ is fixed and $\zeta \to \infty$ the kernel \eqref{sedgeform} does not tend to zero. However, if one considers \textit{e.g.} the $2$-point correlation function, then due to pairings of type $s_{\infty}^{\mathrm{edge}}(\xi,\zeta)s_{\infty}^{\mathrm{edge}}(\zeta,\xi)$, one has exponential decay if either $\xi \to \infty$ or $\zeta \to \infty$.

In order to clarify this exponential decay at the level of kernels, it will be helpful to conjugate them by the following function:
\begin{equation}
\nu_{N}(\xi) := e^{-\frac{\sqrt{Nm}}{2}\left(\left(1+\sqrt{\frac{m}{N}}\xi\right)^{\frac{2}{m}}-1\right)}. \label{nudef}
\end{equation}
For large $N$ this function is a close approximation of the function $e^{-\xi}$, at least on scales $\xi < N^{\epsilon}$. More precisely, a Taylor expansion shows that
\begin{equation}
\sup_{\xi \in (s,N^{\epsilon})}|\nu_{N}(\xi)-e^{-\xi}| = O(e^{-\xi}N^{-\frac{1}{2}+\epsilon}), \qquad N \to \infty.
\end{equation}
This function also has good $L^{1}$-integrability properties. The same Taylor expansion and a saddle point approximation shows that
\begin{equation}
\norm{\nu_{N}}_{1} := \int_{s}^{\infty}d\xi\,\nu_{N}(\xi) = e^{-s}+O(N^{-\frac{1}{2}}). \label{finitenu}
\end{equation}
In the following it will be helpful to define the supremum norm,
\begin{equation}
\norm{K}_{(s,\infty)} := \sup_{(\xi,\zeta) \in (s,\infty)^{2}}\max_{i,j\in\{1,2\}}|K(\xi,\zeta)_{i,j}|.
\end{equation}
We are now ready to define the conjugated kernels:
\begin{equation}
\begin{split}
\tilde{d}_{N}^{\mathrm{edge}}(\xi,\zeta) &= \frac{1}{\nu_{N}(\xi)\nu_{N}(\zeta)}\,d^{\mathrm{edge}}_{N}(\xi,\zeta)\\
\tilde{s}_{N}^{\mathrm{edge}}(\xi,\zeta) &= \frac{1}{\nu_{N}(\xi)}\,s^{\mathrm{edge}}_{N}(\xi,\zeta)\\
\tilde{i}^{\mathrm{edge}}_{N}(\xi,\zeta) &= i_{N}^{\mathrm{edge}}(\xi,\zeta)
\end{split}
\end{equation}
and similarly for the limiting kernels we conjugate in the same way by $\nu_{N}$. The corresponding matrix kernels will be denoted $\tilde{K}^{\mathrm{edge}}_{N}(\xi,\zeta)$ and $\tilde{K}_{\infty}^{\mathrm{edge}}(\xi,\zeta)$ respectively (note that both kernels depend on $N$). We have the following relation between Pfaffians of the unconjugated and conjugated kernels
\begin{equation}
\mathrm{Pf}\bigg\{K^{\mathrm{edge}}_{N}(\xi_{j},\xi_{k})\bigg\}_{j,k=1}^{\ell} = \prod_{j=1}^{N}\nu_{N}(\xi_j)\,\mathrm{Pf}\bigg\{\tilde{K}^{\mathrm{edge}}_{N}(\xi_{j},\xi_{k})\bigg\}_{j,k=1}^{\ell}.
\label{nupfaffident}
\end{equation}
Then Theorem \ref{th:edgeuniformityintro} is an immediate consequence of the following stronger result:
\begin{theorem}
\label{th:edgeuniformity}
For a fixed $s \in \mathbb{R}$ consider the set $A = \{ \xi \in \mathbb{R} : \xi > s\}$. Then for a fixed $m \in \mathbb{N}$, we have $\tilde{s}_{N}^{\mathrm{edge}}, \tilde{d}_{N}^{\mathrm{edge}}$ and $\tilde{i}_{N}^{\mathrm{edge}}$ converging to their limits $\tilde{s}_{\infty}^{\mathrm{edge}}, \tilde{d}_{\infty}^{\mathrm{edge}}$ and $\tilde{i}_{\infty}^{\mathrm{edge}}$ uniformly on $(\xi,\zeta) \in A^{2}$ as $N \to \infty$. In other words we have,
\begin{align}
&\lim_{N \to \infty}\sup_{(\xi,\zeta) \in A^{2}}|\tilde{s}_{N}^{\mathrm{edge}}(\xi,\zeta) - \tilde{s}_{\infty}^{\mathrm{edge}}(\xi,\zeta)| = 0, \label{tildesconv}\\
&\lim_{N \to \infty}\sup_{(\xi,\zeta) \in A^{2}}|\tilde{d}_{N}^{\mathrm{edge}}(\xi,\zeta) - \tilde{d}_{\infty}^{\mathrm{edge}}(\xi,\zeta)| = 0, \label{tildedconv}\\
&\lim_{N \to \infty}\sup_{(\xi,\zeta) \in A^{2}}|\tilde{i}_{N}^{\mathrm{edge}}(\xi,\zeta) - \tilde{i}_{\infty}^{\mathrm{edge}}(\xi,\zeta)| = 0. \label{tildeiconv}
\end{align}

\end{theorem}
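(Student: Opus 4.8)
The plan is to prove edge analogues of Theorem~\ref{th:globintro} for the scalar kernels $S_{N}$, $D_{N}$ and $I_{N}$, working from the exact representations \eqref{drep2}, \eqref{Srep1}, \eqref{Srep3} and \eqref{irep3}. Two features distinguish this from the bulk analysis. First, the boundary terms $C_{N,m}D_{N,m}x^{N-1}w(N^{m/2}x)$, exponentially small in the bulk, now survive, since at the edge $x\approx 1$ and hence $x^{N-1}=O(1)$. Second, the partial sum $f_{N-2}$ does not coincide with $f_{\infty}$ near $\{xv=1\}$: it crosses over to zero in a window of width $N^{-1/2}$, a crossover governed by the complementary error function. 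Throughout I write $x=1+\xi\sqrt{m/N}$, $y=1+\zeta\sqrt{m/N}$, $v=1+\eta\sqrt{m/N}$ (using $2\sqrt{Nm}\rho(1)=\sqrt{N/m}$), and split $A=(s,\infty)$ into a local part $\{\,\cdot\le N^{\epsilon}\}$ and a tail $\{\,\cdot>N^{\epsilon}\}$.

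The one genuinely new analytic input, which I expect to be the main obstacle, is a local edge asymptotic for the sum \eqref{ginfdef1}: uniformly for $u\in(s_{0},N^{\epsilon})$,
\begin{equation*}
f_{N-2}\!\Big(N^{m}\big(1+u\sqrt{\tfrac{m}{N}}\big)\Big)=\tfrac12\,\mathrm{erfc}\!\Big(\tfrac{u}{\sqrt2}\Big)\,f_{\infty}\!\Big(N^{m}\big(1+u\sqrt{\tfrac{m}{N}}\big)\Big)\,(1+o(1))+O(e^{-N^{\epsilon}}),
\end{equation*}
while for $u<-s_{0}$ one has $f_{N-2}=f_{\infty}$ up to an exponentially small error (part of Proposition~\ref{prop:gin2}). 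This should follow from Laplace's method: the summand $(N^{m}x)^{j}/(j!)^{m}$ has a Gaussian profile in $j$ of width $\asymp\sqrt{N/m}$ centred at $j\approx Nx^{1/m}$, so at the edge the truncation $j=N-2$, lying a signed distance $\approx u\sqrt{N/m}$ from that centre, retains exactly the fraction $\tfrac12\mathrm{erfc}(u/\sqrt2)$ of the total mass; the uniform error out to $u\sim N^{\epsilon}$ comes from the saddle-point estimates underlying Propositions~\ref{prop:gin1}--\ref{prop:gin3} (consistently, as $u\to\infty$ the formula matches the correction term of Proposition~\ref{prop:gin2}). I single this step out because Proposition~\ref{prop:gin2} leaves the transition window itself uncovered and because the error has to stay uniform up to scale $N^{\epsilon}$ for the $\nu_{N}$-conjugation in the statement to be controlled.

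Granting the lemma, \eqref{tildedconv} is immediate: substitute the edge scaling into \eqref{drep2}, apply Proposition~\ref{prop:gin1} to the two weights and the lemma at $xy=1+(\xi+\zeta)\sqrt{m/N}+O(N^{-1})$, recall (from the computation behind \eqref{wrwrf}) that $w(N^{m/2}x)w(N^{m/2}y)f_{\infty}(N^{m}xy)$ carries only the Gaussian factor, which tends to $e^{-\frac12(\xi-\zeta)^{2}}$ with no residual exponential in $\xi$ or $\zeta$ separately, and check that the prefactors combine to $d_{\infty}^{\mathrm{edge}}$ of \eqref{dedgeform}; dividing by $\nu_{N}(\xi)\nu_{N}(\zeta)$ preserves the convergence. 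For \eqref{tildesconv} I use \eqref{Srep1} when $\xi\le\zeta$ (so the saddle $v=x$ lies outside $[y,\infty)$) and \eqref{Srep3} when $\xi>\zeta$. The integral term, after $v=1+\eta\sqrt{m/N}$, Proposition~\ref{prop:gin1}, and the edge lemma with transition parameter $\xi+\eta$, becomes up to $o(1)$ (and the same prefactor matching) $-\frac{1}{2\sqrt{2\pi}}\int_{\zeta}^{\infty}(\xi-\eta)e^{-\frac12(\xi-\eta)^{2}}\mathrm{erfc}\!\big(\tfrac{\xi+\eta}{\sqrt2}\big)\,d\eta$ when $\xi\le\zeta$ and $+\frac{1}{2\sqrt{2\pi}}\int_{-\infty}^{\zeta}$ of the same integrand when $\xi>\zeta$; one integration by parts — using $(\xi-\eta)e^{-\frac12(\xi-\eta)^{2}}=\partial_{\eta}e^{-\frac12(\xi-\eta)^{2}}$ and $e^{-\frac12(\xi-\eta)^{2}}e^{-\frac12(\xi+\eta)^{2}}=e^{-\xi^{2}-\eta^{2}}$ — produces $\tfrac{1}{2\sqrt{2\pi}}e^{-\frac12(\xi-\zeta)^{2}}\mathrm{erfc}\!\big(\tfrac{\xi+\zeta}{\sqrt2}\big)$ together with, respectively, $-\tfrac{1}{4\sqrt\pi}e^{-\xi^{2}}\mathrm{erfc}(\zeta)$ or $+\tfrac{1}{4\sqrt\pi}e^{-\xi^{2}}\mathrm{erfc}(-\zeta)$. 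For $\xi\le\zeta$ the boundary term of \eqref{Srep1} supplies the remainder: Stirling for $D_{N,m}$ and Proposition~\ref{prop:gin1} for $w(N^{m/2}x)$, together with $x^{N-1}e^{-\frac{Nm}{2}x^{2/m}}\to e^{-Nm/2}e^{-\xi^{2}}$, give $\tfrac{1}{2\sqrt\pi}e^{-\xi^{2}}(1+o(1))$, and $\tfrac{1}{2\sqrt\pi}e^{-\xi^{2}}-\tfrac{1}{4\sqrt\pi}e^{-\xi^{2}}\mathrm{erfc}(\zeta)=\tfrac{1}{4\sqrt\pi}e^{-\xi^{2}}\mathrm{erfc}(-\zeta)$; for $\xi>\zeta$ the boundary term of \eqref{Srep3} is itself exponentially small and the $\mathrm{erfc}(-\zeta)$ term emerges directly from the integral. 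Either way the total equals $s_{\infty}^{\mathrm{edge}}(\xi,\zeta)$ of \eqref{sedgeform}, so dividing by $\nu_{N}(\xi)$ gives \eqref{tildesconv} on the local part.

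Finally \eqref{tildeiconv} follows by integration. From \eqref{irep3} (for $0<x<y$; the anti-symmetry in Lemma~\ref{lem:dandi} gives $\xi>\zeta$, and mixed-sign pairs are handled as in Theorem~\ref{th:globintro}), the lower limit corresponds to $r\to-\infty$ under $t=1+r\sqrt{m/N}$, the bulk region $\{t<1-\delta\}$ contributes $O(e^{-cN})$ (since $S_{N}(t,y)$ with $t$ in the bulk and $y$ at the edge is exponentially small, by the estimates in the proof of Theorem~\ref{th:globintro}), and the edge window gives $i_{N}^{\mathrm{edge}}(\xi,\zeta)=\int_{-\infty}^{\xi}s_{N}^{\mathrm{edge}}(r,\zeta)\,dr+o(1)\to\int_{-\infty}^{\xi}s_{\infty}^{\mathrm{edge}}(r,\zeta)\,dr$, which one identifies with $i_{\infty}^{\mathrm{edge}}(\xi,\zeta)$ of \eqref{iedgeform} via the explicit formula \eqref{sedgeform} and $\mathrm{erfc}(\zeta)+\mathrm{erfc}(-\zeta)=2$. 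To pass from the local part to all of $A^{2}$: on the tail the crude bounds \eqref{crudegin}, \eqref{crudefn} make $S_{N}$ and $D_{N}$ super-polynomially small, and the weight factor $e^{-\frac{Nm}{2}x^{2/m}}$ beats $\nu_{N}(\xi)^{-1}$ for large $N$ — this is exactly why conjugating by $\nu_{N}$, whose Taylor expansion and finiteness \eqref{finitenu} also make $\tilde s_{\infty}^{\mathrm{edge}}$ and $\tilde d_{\infty}^{\mathrm{edge}}$ integrable in $\xi,\zeta$, is the right normalisation; whereas $\tilde i_{N}^{\mathrm{edge}}=i_{N}^{\mathrm{edge}}$ is uniformly bounded and tends, like its limit, to $\tfrac12\mathrm{sgn}(\xi-\zeta)$ as $\xi,\zeta\to\infty$, so dominated convergence in the $r$-integral gives uniformity there with no conjugation needed. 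Assembling these pieces yields Theorem~\ref{th:edgeuniformity}.
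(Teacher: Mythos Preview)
Your proposal is correct and follows essentially the same route as the paper: split $A$ into a local window $\{\xi\le N^{\epsilon}\}$ and a tail, invoke an edge asymptotic for $f_{N-2}$ producing the $\tfrac12\mathrm{erfc}$ crossover (this is exactly the paper's Lemma~\ref{lem:fnedgelem}, just phrased as a ratio to $f_{\infty}$), feed this together with Proposition~\ref{prop:gin1} into the representations of Lemma~\ref{lem:kernrep}, and recover \eqref{sedgeform} by the same integration by parts you describe; the tail is killed by the functions $\phi$ and $\psi_{N}$ just as you indicate with the crude bounds.

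Two places where the paper is organised a little more economically: for $\tilde{s}_{N}^{\mathrm{edge}}$ it uses only \eqref{Srep1} without your case split $\xi\lessgtr\zeta$ (the erfc factor in the integrand already provides the needed decay when the saddle $v=x$ lies inside $[y,\infty)$, so \eqref{Srep3} is not needed); and for $\tilde{i}_{N}^{\mathrm{edge}}$ it bypasses \eqref{irep3} and the bulk entirely, simply bounding $|i_{N}^{\mathrm{edge}}-i_{\infty}^{\mathrm{edge}}|\le\int_{\xi}^{\zeta}\nu_{N}(\tau)\,|\tilde{s}_{N}^{\mathrm{edge}}(\tau,\zeta)-\tilde{s}_{\infty}^{\mathrm{edge}}(\tau,\zeta)|\,d\tau\le\|\nu_{N}\|_{1}\cdot\sup|\tilde{s}_{N}^{\mathrm{edge}}-\tilde{s}_{\infty}^{\mathrm{edge}}|$, which reduces \eqref{tildeiconv} immediately to \eqref{tildesconv} via \eqref{finitenu}. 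Your route through \eqref{irep3} works too but requires the extra step of controlling the bulk portion of the $t$-integral.
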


Before giving the proof we show how to use it to obtain convergence in distribution of the largest real eigenvalue and prove Theorem \ref{th:lambdamax}.
\begin{proof}[Proof of Theorem \ref{th:lambdamax}]
It is known from the general theory of point processes that the probability of a gap can be expressed as a series involving correlation functions (\textit{e.g.} Chapter 5 of \cite{DVJ88}). We have,
\begin{align}
& \mathbb{P}\left(\sqrt{\frac{N}{m}}\,\left(\lambda^{(m)}_{N,\mathrm{max}}-1\right) < s\right) = \mathbb{P}\left(\bigg\{\textrm{the interval}\,\, [1+\sqrt{\frac{m}{N}}\,s,\infty) \,\,\textrm{contains no eigenvalues}\bigg\}\right)\\
 &=\sum_{\ell=1}^{N}\frac{(-1)^{\ell}}{\ell!}\int_{1+\sqrt{\frac{m}{N}}\,s}^{\infty}\ldots \int_{1+\sqrt{\frac{m}{N}}\,s}^{\infty}\,\mathrm{Pf}\bigg\{K_{N}(x_{j},x_{k})\bigg\}_{j,k=1}^{\ell}\,dx_{1}\ldots dx_{\ell}\\
&=\sum_{\ell=1}^{N}\frac{(-1)^{\ell}}{\ell!}\int_{s}^{\infty}\ldots \int_{s}^{\infty}\,\left(\prod_{j=1}^{\ell}\nu_{N}(\xi_j)\right)\,\mathrm{Pf}\bigg\{\tilde{K}^{\mathrm{edge}}_{N}(\xi_{j},\xi_{k})\bigg\}_{j,k=1}^{\ell}\,d\xi_{1}\ldots d\xi_{\ell}.
\end{align}
where we use identity \eqref{nupfaffident} to obtain the final line. The corresponding limiting distribution is given again using \eqref{nupfaffident} by
\begin{align}
\mathbb{P}\left(\lambda_{\mathrm{max}} < s\right) 
&=\sum_{\ell=1}^{\infty}\frac{(-1)^{\ell}}{\ell!}\int_{s}^{\infty}\ldots \int_{s}^{\infty}\,\left(\prod_{j=1}^{\ell}\nu_{N}(\xi_j)\right)\,\mathrm{Pf}\bigg\{\tilde{K}_{\infty}^{\mathrm{edge}}(\xi_{j},\xi_{k})\bigg\}_{j,k=1}^{\ell}\,d\xi_{1}\ldots d\xi_{\ell}.\label{limitpfaff}
\end{align}

Then using the identity $\mathrm{Pf}(M)^{2} = \det(M)$ for a square anti-symmetric matrix $M$, combined with the inequality $|x-y| \leq |x^{2}-y^{2}|^{\frac{1}{2}}$ for non-negative $x$ and $y$, we get the bound
\begin{align}
&\bigg{|}\mathbb{P}\left(\sqrt{\frac{N}{m}}\,\left(\lambda^{(m)}_{N,\mathrm{max}}-1\right) < s\right)-\mathbb{P}\left(\lambda_{\mathrm{max}} < s\right)\bigg{|}\\
&\leq \sum_{\ell=1}^{N}\frac{1}{\ell!}\int_{[s,\infty)^{l}}\,\left(\prod_{j=1}^{\ell}d\xi_{j}\,\nu_{N}(\xi_j)\right)\,\bigg{|}\mathrm{Pf}\bigg\{\tilde{K}^{\mathrm{edge}}_{N}(\xi_{j},\xi_{k})\bigg\}_{j,k=1}^{\ell}-\mathrm{Pf}\bigg\{\tilde{K}_{\infty}^{\mathrm{edge}}(\xi_{j},\xi_{k})\bigg\}_{j,k=1}^{\ell}\bigg{|}+e_{N}\\
&\leq\sum_{\ell=1}^{N}\frac{1}{\ell!}\int_{[s,\infty)^{l}}\,\left(\prod_{j=1}^{\ell}d\xi_{j}\,\nu_{N}(\xi_j)\right)\,\bigg{|}\mathrm{det}\bigg\{\tilde{K}^{\mathrm{edge}}_{N}(\xi_{j},\xi_{k})\bigg\}_{j,k=1}^{\ell}-\mathrm{det}\bigg\{\tilde{K}^{\mathrm{edge}}_{\infty}(\xi_{j},\xi_{k})\bigg\}_{j,k=1}^{\ell}\bigg{|}^{\frac{1}{2}}+e_{N}\\
&\leq \norm{\tilde{K}^{\mathrm{edge}}_{N}-\tilde{K}^{\mathrm{edge}}_{\infty}}_{(s,\infty)}^{\frac{1}{2}}\,\sum_{\ell=1}^{\infty}\norm{\nu_{N}}_{1}^{\ell}\,\mathrm{max}^{\ell}\left(\norm{\tilde{K}^{\mathrm{edge}}_{N}}_{(s,\infty)},\norm{\tilde{K}_{\infty}^{\mathrm{edge}}}_{(s,\infty)}\right)\,\frac{(2\ell)^{\frac{1}{2}+\frac{\ell}{2}}}{\ell!}+e_{N}\label{kerndiffbnd}
\end{align}
where
\begin{equation}
e_{N} := \sum_{\ell=N+1}^{\infty}\frac{1}{\ell!}\int_{s}^{\infty}\ldots \int_{s}^{\infty}\,\left(\prod_{j=1}^{\ell}\nu_{N}(\xi_j)\right)\,\mathrm{Pf}\bigg\{\tilde{K}_{\infty}^{\mathrm{edge}}(\xi_{j},\xi_{k})\bigg\}_{j,k=1}^{\ell}\,d\xi_{1}\ldots d\xi_{\ell},
\end{equation}
and where the final bound \eqref{kerndiffbnd} follows from Lemma \ref{AGZ_lemma}. Note that by Theorem \ref{th:edgeuniformity} the norms $\norm{\tilde{K}^{\mathrm{edge}}_{N}}_{(s,\infty)}$ and $\norm{\tilde{K}^{\mathrm{edge}}_{\infty}}_{(s,\infty)}$ are uniformly bounded. Combined with the finiteness of $\norm{\nu_{N}}_{1}$ as in \eqref{finitenu} and Stirling's formula we deduce that the summation in \eqref{kerndiffbnd} is uniformly bounded.  The same reasoning shows that $e_{N} \to 0$ as $N \to \infty$ and that the series \eqref{limitpfaff} converges absolutely. Then Theorem \ref{th:edgeuniformity} shows that $\norm{\tilde{K}^{\mathrm{edge}}_{N}-\tilde{K}_{\infty}^{\mathrm{edge}}}_{(s,\infty)} \to 0$ as $N \to \infty$ and completes the proof.
\end{proof}

\begin{proof}[Proof of Theorem \ref{th:edgeuniformity}]
We work with the scaling $x = 1+\frac{\xi}{2\sqrt{Nm}\rho(1)}$, $y = 1+\frac{\zeta}{2\sqrt{Nm}\rho(1)}$.
We first consider the part of $A^{2}$ where $-s < \xi < N^{\epsilon}$ and $-s < \zeta < N^{\epsilon}$ where $\epsilon$ is chosen sufficiently small, \textit{e.g.} $\epsilon < \frac{1}{4}$ will be enough. Thus we define the truncation $\tilde{A} = \{-s < \xi < N^{\epsilon}\}$. The following estimates are easily established, uniformly for $\xi \in \tilde{A}$,
\begin{align}
\frac{1}{\nu_{N}(\xi)} &\sim e^{\xi}, \\
w(N^{\frac{m}{2}}x) &\sim e^{-\frac{Nm}{2}}\,N^{-\frac{m-1}{2}}\,e^{-\sqrt{Nm}\xi+\frac{1}{2}\xi^{2}(m-2)}\frac{(4\pi)^{\frac{m-1}{2}}}{\sqrt{m}} \label{weight2}
\end{align}
The truncated series $f_{N-2}(N^{m}xv)$ will be estimated using Lemma \ref{lem:fnedgelem}. Taking $v=\frac{w}{2\rho(1)\sqrt{Nm}}$ and further Taylor expanding the main result of that Lemma in the $w$, $\xi$ and $\zeta$ variables, we get
\begin{equation}
\begin{split}
f_{N-2}(N^{m}x(v+y)) &\sim \frac{e^{Nm}}{(2\pi)^{\frac{m}{2}}}\sqrt{\frac{\pi}{2m}}\,N^{-\frac{m-1}{2}}e^{\sqrt{Nm}(\xi+\zeta+w)-\frac{1}{2}m(\xi+\zeta+w)^{2}+m\xi(\zeta+w)}\\
&\times e^{\frac{1}{2}(\xi+\zeta+w)^{2}}\mathrm{erfc}\left(\frac{\xi+\zeta+w}{\sqrt{2}}\right) \label{erfcest}
\end{split}
\end{equation}
which also holds uniformly in $(\xi,\zeta,w) \in \tilde{A}^{3}$. 

Then combining \eqref{erfcest} and \eqref{weight2} we have
\begin{equation}
\begin{split}
&\frac{1}{\nu_{N}(\xi)\nu_{N}(\zeta))}w(N^{\frac{m}{2}}x)w(N^{\frac{m}{2}}(v+y))f_{N-2}(N^{m}x(v+y))\\
&\sim e^{\xi+\zeta}\sqrt{\frac{2}{\pi}}\,\frac{(2\sqrt{2\pi})^{m}}{8m^{\frac{3}{2}}}\,N^{-\frac{3(m-1)}{2}}e^{-\frac{1}{2}(\xi-\zeta-w)^{2}}\mathrm{erfc}\left(\frac{\xi+\zeta+w}{\sqrt{2}}\right) \label{wwfasy}
\end{split}
\end{equation}
For $v=w=0$ this gives $\tilde{d}^{\mathrm{edge}}_{N}(\xi,\zeta) \sim \tilde{d}^{\mathrm{edge}}(\xi,\zeta)$ uniformly on $\tilde{A}^{2}$ as $N \to \infty$. Then since $\tilde{d}^{\mathrm{edge}}(\xi,\zeta)$ is uniformly bounded we get \eqref{tildedconv} restricted to $\tilde{A}^{2}$. 

For the $s^{\mathrm{edge}}_{N}(\xi,\zeta)$ kernel we use the representation \eqref{Srep1} but we truncate the integral at level $y+\frac{N^{\epsilon}}{\sqrt{Nm}\rho(1)}$. Then the asymptotic \eqref{wwfasy} remains applicable. After the change of variables $v = y+\frac{w}{2\sqrt{Nm}\rho(1)}$ we obtain
\begin{equation}
\begin{split}
&\frac{2}{\nu_{N}(\xi)}\,\frac{N^{\frac{3m}{2}}}{(2\sqrt{2\pi})^{m}}\int_{y}^{y+\frac{N^{\epsilon}}{\sqrt{Nm}\rho(1)}}dv\,(x-v)w(N^{\frac{m}{2}}x)w(N^{\frac{m}{2}}v)f_{N-2}(N^{m}xv)\\
&\sim 2e^{\xi}\sqrt{Nm}\rho(1)\,\frac{1}{2\sqrt{2\pi}}\,\int_{0}^{N^{\epsilon}}dw\,(\xi-\zeta-w)e^{-\frac{1}{2}(\xi-\zeta-w)^{2}}\,\mathrm{erfc}\left(\frac{\xi+\zeta+w}{\sqrt{2}}\right)\\
&= 2e^{\xi}\sqrt{Nm}\rho(1)\,\frac{1}{2\sqrt{2\pi}}\,\int_{0}^{\infty}dw\,(\xi-\zeta-w)e^{-\frac{1}{2}(\xi-\zeta-w)^{2}}\,\mathrm{erfc}\left(\frac{\xi+\zeta+w}{\sqrt{2}}\right)+O(e^{-cN^{2\epsilon}}),
\label{truncateint}
\end{split}
\end{equation}
where we used the Gaussian decay of the complementary error function to neglect the tail of the integral, and that $e^{\xi-N^{2\epsilon}} = O(e^{-cN^{2\epsilon}})$. The second term in \eqref{Srep1} also contributes. Combining Stirling's formula, \eqref{cnmdef} and \eqref{weight2} establishes the following uniform convergence,
\begin{equation}
\lim_{N \to \infty}\sup_{s < \xi < N^{\epsilon}}\bigg{|}\frac{C_{N,m}x^{N-1}N^{\frac{m(N-3)}{2}}2^{m\left(\frac{N-1}{2}\right)}}{\nu_{N}(\xi)\,2\sqrt{Nm}\rho(1)}\left(\frac{\Gamma\left(\frac{N-1}{2}\right)}{(N-2)!}\right)^{m}w_{r}(N^{\frac{m}{2}}x)-\frac{1}{\sqrt{4\pi}}\,e^{\xi-\xi^{2}}\bigg{|}=0. \label{unifxiterm}
\end{equation}
The limiting kernel for $\tilde{s}_{N}^{\mathrm{edge}}(\xi,\zeta)$ is thus
\begin{equation}
\begin{split}
&\frac{e^{\xi}}{2\sqrt{2\pi}}\int_{0}^{\infty}dw\,(\xi-\zeta-w)e^{-\frac{1}{2}(\xi-\zeta-w)^{2}}\,\mathrm{erfc}\left(\frac{\xi+\zeta+w}{\sqrt{2}}\right)+\frac{1}{\sqrt{4\pi}}\,e^{\xi-\xi^{2}}\\
&=\frac{e^{\xi}}{2\sqrt{2\pi}}\left(e^{-\frac{1}{2}(\xi-\zeta)^{2}}\mathrm{erfc}\left(\frac{\xi+\zeta}{\sqrt{2}}\right)+\sqrt{\frac{2}{\pi}}\int_{0}^{\infty}dw\,e^{-\frac{1}{2}(\xi-\zeta-w)^{2}}e^{-\frac{1}{2}(\xi+\zeta+w)^{2}}\right)+ \frac{1}{\sqrt{4\pi}}\,e^{\xi-\xi^{2}}\\
&= \frac{e^{\xi}}{2\sqrt{2\pi}}\,e^{-\frac{1}{2}(\xi-\zeta)^{2}}\mathrm{erfc}\left(\frac{\xi+\zeta}{\sqrt{2}}\right)+\frac{1}{4\sqrt{\pi}}\,e^{\xi-\xi^{2}}\mathrm{erfc}(-\zeta), \label{limitkernalt}
\end{split}
\end{equation}
where we have used integration by parts. 

Now we control the complement of the integration range in \eqref{truncateint}, namely $[y+\frac{N^{\epsilon}}{\sqrt{Nm}\rho(1)},\infty)$. Define
\begin{equation}
\psi_{N}(x) = \frac{m}{2}(x^{\frac{2}{m}}-1)-\log(x)\left(1-\frac{a}{N}\right)-\frac{m}{2\sqrt{N}}(x^{\frac{2}{m}}-1),
\end{equation}
where $a \in \mathbb{R}$ is fixed. The function $\psi_{N}(x)$ has a unique minimum at the point
\begin{equation}
x^{*} = \left(\frac{1+\frac{a}{N}}{1-N^{-\frac{1}{2}}}\right)^{\frac{m}{2}}
\end{equation}
and monotonically increases beyond $x > x^{*}$. Furthermore, a simple Taylor expansion shows that $N\psi_{N}(x^{*}) = O(1)$. If $x > 1+N^{\epsilon-\frac{1}{2}}$ then $e^{-N\psi_{N}(x)} \leq e^{-N\psi_{N}(1+N^{\epsilon-\frac{1}{2}})} = O(e^{-\frac{1}{m}N^{2\epsilon}})$. 

Similarly, define
\begin{equation}
\phi(v) = \frac{m}{2}(v^{\frac{2}{m}}-1)-\log(v).
\end{equation}
This function has a unique minimum at $v=1$ and increases monotonically for $v >1$. If $y>1$, then in the definition of $\tilde{s}_{N}(x,y)$ we have $v\geq y$ and so $\phi(v) \geq \phi(y)$. Hence the middle term in the decomposition
\begin{equation}
N\phi(v) = (N-1)\phi(y) + (N-1)(\phi(v)-\phi(y)) + \phi(v) \label{saddledecomp}
\end{equation}
is non-negative and $e^{-N\phi(v)} \leq e^{-(N-1)\phi(y)}\,e^{-\phi(v)}$. 

Then we can bound
\begin{equation}
\begin{split}
&\frac{1}{\nu_{N}(\xi)}\,N^{c}\int_{y+N^{-\frac{1}{2}+\epsilon}}^{\infty}dv\,|x-v|w(N^{\frac{m}{2}}x)w(N^{\frac{m}{2}}v)f_{N-2}(N^{m}xv)\\
&\leq N^{c}\int_{y+N^{-\frac{1}{2}+\epsilon}}^{\infty}dv\,e^{-N\phi(v)-N\psi_{N}(\xi)}\\
&\leq N^{c}e^{-(N-1)\phi\left(y+N^{-\frac{1}{2}+\epsilon}\right)-N\psi_{N}(\xi)}\int_{1}^{\infty}dv\,e^{-\phi(v)}\\
&=O(e^{-cN^{2\epsilon}})
\end{split}
\end{equation}

For the integrated kernel $i^{\mathrm{edge}}_{N}(\xi,\zeta)$, we have
\begin{align}
&\sup_{(\xi,\zeta) \in \tilde{A}^{2}}\bigg{|}i^{\mathrm{edge}}_{N}(\xi,\zeta)-i^{\mathrm{edge}}(\xi,\zeta)\bigg{|}\\
&= \sup_{(\xi,\zeta) \in \tilde{A}^{2}}\bigg{|}\bigg{|}\int_{\xi}^{\eta}d\tau\,\bigg{|}s^{\mathrm{edge}}_{N}(\tau,\zeta)-s^{\mathrm{edge}}(\tau,\zeta)\bigg{|}\bigg{|}\\
&= \sup_{(\xi,\zeta) \in \tilde{A}^{2}}\bigg{|}\bigg{|}\int_{\xi}^{\eta}d\tau\,\nu_{N}(\tau)\bigg{|}\tilde{s}^{\mathrm{edge}}_{N}(\tau,\zeta)-\tilde{s}^{\mathrm{edge}}(\tau,\zeta)\bigg{|}\bigg{|}\\
&\leq \norm{\nu_N}_{1}\,\sup_{(\xi,\zeta) \in \tilde{A}^{2}}\bigg{|}\tilde{s}^{\mathrm{edge}}_{N}(\xi,\zeta)-\tilde{s}^{\mathrm{edge}}(\xi,\zeta)\bigg{|} \to 0, \qquad N \to \infty,
\end{align}
where the last limit follows from the already obtained uniform convergence of $\tilde{s}^{\mathrm{edge}}_{N}(\xi,\zeta)$ on $\tilde{A}^{2}$.

We now deal with the complement of $\tilde{A}^{2}$, namely when either $x > 1+N^{\epsilon-\frac{1}{2}}$ or $y > 1+N^{\epsilon-\frac{1}{2}}$. We have
\begin{equation}
\begin{split}
&|x-y|e^{\sqrt{N}\frac{m}{2}(x^{\frac{2}{m}}-1)+\sqrt{N}\frac{m}{2}(y^{\frac{2}{m}}-1)}w(N^{\frac{m}{2}}x)w(N^{\frac{m}{2}}y)f_{N-2}(N^{m}xy)\\
&\leq N^{c}e^{-N\psi_{N}(x)-N\psi_{N}(y)}\\
&= O(e^{-cN^{2\epsilon}}).
\end{split}
\end{equation} 
This shows that $\tilde{d}_{N}(x,y) = O(e^{-cN^{2\epsilon}})$ uniformly on $(x,y) \in \tilde{A}^{2}$. Now for the kernel $\tilde{s}_{N}(x,y)$, we have
\begin{align}
&e^{\sqrt{N}\frac{m}{2}(x^{\frac{2}{m}}-1)}|x-v|w(N^{\frac{m}{2}}v)w(N^{\frac{m}{2}}x)f_{N-2}(N^{m}vx) \leq N^{c}e^{-N\psi_{N}(x)}(x+v)e^{-N\phi(v)}\\
&\leq N^{c}e^{-N\psi_{N}(x)-(N-1)\phi(y)}(x+v)e^{-\phi(v)}.
\end{align}
and therefore if $(x,y) \in A^{2}\setminus\tilde{A}^{2}$, we have
\begin{equation}
\begin{split}
&\bigg{|}e^{\frac{m}{2}\sqrt{N}(x^{\frac{2}{m}}-1)}\int_{y}^{\infty}dv\,(x-v)w(N^{\frac{m}{2}}x)w(N^{\frac{m}{2}}(v))f_{N-2}(N^{m}xv)\bigg{|}\\
&\leq N^{c}e^{-N\psi_{N}(x)-(N-1)\phi(y)}\int_{1}^{\infty}dv\,(x+v)e^{-\phi(v)}\label{stildebnd}\\
&=O(e^{-N^{2\epsilon}}).
\end{split} 
\end{equation}
For the second term in \eqref{Srep1}, an application of Stirling's formula and the crude bound on the weight \eqref{crudegin} shows that for any $x > 1+N^{\epsilon-\frac{1}{2}}$, we have
\begin{equation}
\begin{split}
&e^{\sqrt{N}(x-1)}\bigg{|}\frac{C_{N,m}x^{N-1}N^{\frac{m(N-3)}{2}}2^{m\left(\frac{N-1}{2}\right)}}{2\sqrt{Nm}\rho(1)}\left(\frac{\Gamma\left(\frac{N-1}{2}\right)}{(N-2)!}\right)^{m}w_{r}(N^{\frac{m}{2}}x)\bigg{|} \leq N^{c}e^{-N\psi_{N}(x)}\\
&=O(e^{-N^{2\epsilon}}).
\end{split}
\end{equation}
For the $I_{N}(x,y)$ kernel suppose that $y>1+N^{\epsilon-\frac{1}{2}}$. Then inserting \eqref{Srep1} into \eqref{ikernintro} and repeating the bounds in $\eqref{stildebnd}$ shows that the contribution from the first term in \eqref{Srep1} is exponentially small. 
Similarly, employing the representation on the middle line of \eqref{limitkernalt} for the limiting kernel, we have
\begin{align}
i^{\mathrm{edge}}(\xi,\zeta) &= \int_{\zeta}^{\xi}d\tau\,s^{\mathrm{edge}}(\tau,\zeta)-\frac{1}{2}\\
&= \frac{1}{\sqrt{4\pi}}\int_{\xi}^{\infty}d\tau\,e^{-\tau^{2}}-\frac{1}{2}+O(e^{-cN^{2\epsilon}})
\end{align}
We therefore have
\begin{align}
&\bigg{|}I_{N}(x,y)-i^{\mathrm{edge}}(\xi,\zeta)\bigg{|}\\
&\leq \bigg{|}C_{N,m}N^{\frac{m(N-3)}{2}}2^{m\left(\frac{N-1}{2}\right)}\left(\frac{\Gamma\left(\frac{N-1}{2}\right)}{(N-2)!}\right)^{m}\int_{x}^{\infty}dt\,t^{N-1}w_{r}(N^{\frac{m}{2}}t) - \frac{1}{\sqrt{4\pi}}\int_{\xi}^{\infty}e^{-\tau^{2}}\,d\tau\bigg{|} + O(e^{-cN^{2\epsilon}})
\end{align}
If $x = 1+\sqrt{\frac{m}{N}}\,\xi$ with $\xi<N^{\epsilon}$, the latter difference tends to zero uniformly by the same approach used to derive \eqref{unifxiterm}. If $\xi > N^{\epsilon}$ both terms are $O(e^{-cN^{2\epsilon}})$.
\end{proof}

\appendix

\section{Microscopic limit at the origin}
\label{se:smallest}
In this section we prove Theorems \ref{th:origkern}  and \ref{th:smalleigintro}. In terms of the scalar kernel $S_{N}(x,y)$ of Theorem \ref{th:ik} and definitions \eqref{dkernintro} and \eqref{ikernintro} we define
\begin{align}
d^{(\mathrm{origin})}_{N,m}(\xi,\zeta) &= N^{-m}\,D_{N}(\xi N^{-\frac{m}{2}},\zeta N^{-\frac{m}{2}}),\\
s^{(\mathrm{origin})}_{N,m}(\xi,\zeta) &= N^{-\frac{m}{2}}\,S_{N}(\xi N^{-\frac{m}{2}},\zeta N^{-\frac{m}{2}}),\\
i^{(\mathrm{origin})}_{N,m}(\xi,\zeta) &= I_{N}(\xi N^{-\frac{m}{2}},\zeta N^{-\frac{m}{2}}),
\end{align}
and
\begin{equation}
k^{(\mathrm{origin})}_{N,m}(\xi,\zeta) = \begin{pmatrix} d^{(\mathrm{origin})}_{N,m}(\xi,\zeta) & s^{(\mathrm{origin})}_{N,m}(\xi,\zeta)\\ -s^{(\mathrm{origin})}_{N,m}(\zeta,\xi) & i^{(\mathrm{origin})}_{N,m}(\xi,\zeta) \end{pmatrix}
\end{equation}
Then we define a limiting kernel by the following formulas
\begin{align}
d^{(\mathrm{origin})}_{\infty,m}(\xi,\zeta) &= 2\frac{1}{(2\sqrt{2\pi})^{m}}(\zeta-\xi)w(\xi)w(\zeta)f_{\infty}(\xi \zeta),\\
s^{(\mathrm{origin})}_{\infty,m}(\xi,\zeta) &= \frac{1}{(2\sqrt{2\pi})^{m}}\,\int_{\mathbb{R}}d\eta\,(\xi-\eta)\mathrm{sgn}(\zeta-\eta)w(\xi)w(\eta)f_{\infty}(\xi \eta),\\
i^{(\mathrm{origin})}_{\infty,m}(\xi,\zeta) &= -\int_{\xi}^{\zeta}dt\,s^{(\mathrm{origin})}_{\infty,m}(t,\zeta)+\frac{1}{2}\,\mathrm{sgn}(\xi-\zeta),
\end{align}
and
\begin{equation}
k^{(\mathrm{origin})}_{\infty,m}(\xi,\zeta) = \begin{pmatrix} d^{(\mathrm{origin})}_{\infty,m}(\xi,\zeta) & s^{(\mathrm{origin})}_{\infty,m}(\xi,\zeta)\\ -s^{(\mathrm{origin})}_{\infty,m}(\zeta,\xi) & i_{\infty,m}^{(\mathrm{origin})}(\xi,\zeta) \end{pmatrix}. \label{koriglim}
\end{equation}
\begin{proposition}
\label{prop:kernolim}
The limit $\lim_{N \to \infty}k_{N}^{(\mathrm{origin})}(\xi,\zeta) = k^{(\mathrm{origin})}_{\infty,m}(\xi,\zeta)$ holds uniformly on compact subsets of $\xi$ and $\zeta$.
\end{proposition}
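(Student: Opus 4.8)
\textit{Proof idea.} The plan is to reduce the statement, via one change of variables, to the elementary locally uniform convergence of the partial sums $f_{N-2}$ to the entire function $f_{\infty}$, the only non-trivial input being a tail estimate exploiting the super-Gaussian decay of the weight $w$ from~\eqref{ginweightdef1}. First I would record the exact rescaled forms. Substituting $x=\xi N^{-m/2}$, $y=\zeta N^{-m/2}$ into the representations~\eqref{drep2},~\eqref{prekernel} and~\eqref{ikernintro}, and changing the inner integration variable by $v=\eta N^{-m/2}$, the argument of $f_{N-2}$ becomes $N^{m}xv=\xi\eta$ while the powers of $N$ in $C_{N,m}=N^{3m/2}(2\sqrt{2\pi})^{-m}$ cancel exactly against $dv$ and the outer normalisations $N^{-m}$, $N^{-m/2}$. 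One is left with
\begin{align*}
d^{(\mathrm{origin})}_{N,m}(\xi,\zeta)&=\frac{2}{(2\sqrt{2\pi})^{m}}\,(\zeta-\xi)\,w(\xi)w(\zeta)\,f_{N-2}(\xi\zeta),\\
s^{(\mathrm{origin})}_{N,m}(\xi,\zeta)&=\frac{w(\xi)}{(2\sqrt{2\pi})^{m}}\int_{\mathbb{R}}d\eta\,(\xi-\eta)\,\mathrm{sgn}(\zeta-\eta)\,w(\eta)\,f_{N-2}(\xi\eta),\\
i^{(\mathrm{origin})}_{N,m}(\xi,\zeta)&=-\int_{\xi}^{\zeta}dt\,s^{(\mathrm{origin})}_{N,m}(t,\zeta)+\tfrac12\,\mathrm{sgn}(\xi-\zeta),
\end{align*}
which are precisely the defining formulas for $d^{(\mathrm{origin})}_{\infty,m}$, $s^{(\mathrm{origin})}_{\infty,m}$ and $i^{(\mathrm{origin})}_{\infty,m}$ with the sum $f_{\infty}$ replaced by the truncation $f_{N-2}$. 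So everything reduces to showing that these three objects converge to their $f_{\infty}$-counterparts uniformly on compacts.

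For the $d$-entry this is immediate: $d^{(\mathrm{origin})}_{N,m}-d^{(\mathrm{origin})}_{\infty,m}$ equals the factor $\tfrac{2}{(2\sqrt{2\pi})^{m}}(\zeta-\xi)w(\xi)w(\zeta)$ times $f_{N-2}(\xi\zeta)-f_{\infty}(\xi\zeta)=-\sum_{j\ge N-1}(\xi\zeta)^{j}/(j!)^{m}$, and the latter $\to0$ uniformly for $|\xi\zeta|$ bounded since $f_{\infty}$ has infinite radius of convergence. (For $m\ge2$ the weight $w$ carries an integrable logarithmic singularity at the origin, so here and below "compact" is to be read as a compact subset of $\mathbb{R}\setminus\{0\}$, on which $w$ is bounded; for $m=1$ one has $w(x)=e^{-x^{2}/2}$ and no such restriction is needed.)

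The substantive step is the $s$-entry. Writing the difference as $\tfrac{w(\xi)}{(2\sqrt{2\pi})^{m}}$ times $\int_{\mathbb{R}}d\eta\,(\xi-\eta)\,\mathrm{sgn}(\zeta-\eta)\,w(\eta)\,\bigl(f_{N-2}(\xi\eta)-f_{\infty}(\xi\eta)\bigr)$, I would fix $\xi,\zeta$ in a compact $K\subset[-R,R]$ and split the $\eta$-integral at $|\eta|=L$. On $\{|\eta|\le L\}$ the product $\xi\eta$ stays bounded, so $|f_{N-2}(\xi\eta)-f_{\infty}(\xi\eta)|\le\sum_{j\ge N-1}(RL)^{j}/(j!)^{m}\to0$ uniformly in $\xi,\eta$, and since $(\xi-\eta)w(\eta)$ is integrable over $[-L,L]$ this part is $o(1)$. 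On $\{|\eta|>L\}$ I would combine the crude bound $w(\eta)\le C e^{-\tfrac m2|\eta|^{2/m}}$ from~\eqref{crudegin} (together with the evenness of $w$) with $|f_{N-2}(\xi\eta)|\le f_{\infty}(|\xi\eta|)\le C e^{m|\xi\eta|^{1/m}}\le C e^{mR^{1/m}|\eta|^{1/m}}$ from~\eqref{crudefn} (the same bound holding for $|f_{\infty}(\xi\eta)|$); since $2/m>1/m$, the integrand is dominated, uniformly in $N$, by $C(R+|\eta|)\,e^{-\tfrac m2|\eta|^{2/m}+mR^{1/m}|\eta|^{1/m}}$, which is integrable on $\{|\eta|>L\}$, so this tail is $<\varepsilon$ once $L$ is large. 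Choosing $L$ first and then $N$ gives uniform smallness, and $w(\xi)$ is bounded on $K$, so $s^{(\mathrm{origin})}_{N,m}\to s^{(\mathrm{origin})}_{\infty,m}$ uniformly on $K^{2}$. The $i$-entry then follows for free, since $i^{(\mathrm{origin})}_{N,m}(\xi,\zeta)-i^{(\mathrm{origin})}_{\infty,m}(\xi,\zeta)=-\int_{\xi}^{\zeta}\bigl(s^{(\mathrm{origin})}_{N,m}(t,\zeta)-s^{(\mathrm{origin})}_{\infty,m}(t,\zeta)\bigr)dt$ is bounded by $|\zeta-\xi|$ times the supremum over the (compact) segment between $\xi$ and $\zeta$ of the $s$-difference already controlled.

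The only genuine work is the tail estimate in the $s$-entry: one must see that the weight's super-Gaussian decay $e^{-c|\eta|^{2/m}}$ beats the sub-Gaussian growth $e^{c|\eta|^{1/m}}$ of $f_{\infty}$ uniformly in $N$, with the case $\xi\eta<0$ handled by the elementary majorant $|f_{N-2}(z)|\le f_{\infty}(|z|)$ valid for all real $z$. Everything else is a change of variables together with the locally uniform convergence of partial sums of an entire function.
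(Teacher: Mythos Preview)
Your proof is correct and follows essentially the same route as the paper: the same change of variables reduces everything to $f_{N-2}\to f_\infty$, and the three entries are handled in the same order with the same logic, your treatment of the $s$-entry being in fact more detailed than the paper's (which simply appeals to ``integrability of the weights'' without spelling out the tail splitting). Your caveat about restricting to compacts in $\mathbb{R}\setminus\{0\}$ for $m\ge2$ is a reasonable precaution given the singularity of $w$ at the origin, but it is not actually needed for the \emph{difference} $k_N-k_\infty$: the tail $f_{N-2}-f_\infty$ carries a factor $(\xi\eta)^{N-1}$ which overwhelms the logarithmic-type blow-up of $w(\xi)$, so the convergence is in fact uniform on compacts of $\mathbb{R}$ as the paper states.
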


\begin{proof}
We begin with the kernel $D_{N}(x,y)$. By considering the difference $f_{\infty}(\xi \zeta) - f_{N-2}(\xi \zeta)$ combined with absolute convergence of $f_{\infty}$, it is straightforward to check that the following holds uniformly on compact subsets of $\xi$ and $\zeta$:
\begin{equation}
\lim_{N \to \infty}2N^{-m}C_{N,m}(y-x)w(N^{\frac{m}{2}}x)w(N^{\frac{m}{2}}y)f_{N-2}(N^{m}xy) = 2\frac{1}{(2\sqrt{2\pi})^{m}}(\zeta-\xi)w(\xi)w(\zeta)f_{\infty}(\xi \zeta)
\end{equation}
For the $S_{N}$ kernel we change variables $v = \eta N^{-\frac{m}{2}}$ and obtain
\begin{equation}
s^{(\mathrm{origin})}_{N,m}(\xi,\zeta) = \frac{1}{(2\sqrt{2\pi})^{m}}\,\int_{\mathbb{R}}d\eta\,(\xi-\eta)\mathrm{sgn}(\zeta-\eta)w(\xi)w(\eta)f_{N-2}(\xi \eta).
\end{equation}
Similarly, the difference between finite-$N$ and limiting kernel is governed by the tail sum $f_{\infty}(\xi \eta) - f_{N-2}(\xi \eta)$. Using this and the integrability of the weights we get the uniform convergence
\begin{equation}
\lim_{N \to \infty}s^{(\mathrm{origin})}_{N,m}(\xi,\zeta) = \frac{1}{(2\sqrt{2\pi})^{m}}\,\int_{\mathbb{R}}d\eta\,(\xi-\eta)\mathrm{sgn}(\zeta-\eta)w(\xi)w(\eta)f_{\infty}(\xi \eta).\label{sorigconv}
\end{equation}
Finally, the uniform convergence \eqref{sorigconv} easily implies uniform convergence of the integrated kernel $i^{(\mathrm{origin})}_{N,m}(x,y) \to i^{(\mathrm{origin})}_{\infty,m}(\xi,\zeta)$ as $N \to \infty$.
\end{proof}

Let $\lambda^{(m)}_{\mathrm{min},N}$ denote the smallest positive real eigenvalue of the product matrix $G^{(m)}$. 
\begin{proposition}
We have the convergence in distribution
\begin{equation}
N^{\frac{m}{2}}\lambda^{(m)}_{\mathrm{min},N} \overset{d}{\longrightarrow} \lambda^{(m)}_{\mathrm{min}}, \qquad N \to \infty,
\end{equation}
where the limiting random variable is defined in terms of the kernel \eqref{koriglim} by the absolutely convergent series,
\begin{equation}
\mathbb{P}(\lambda^{(m)}_{\mathrm{min}} > s) = \sum_{\ell=1}^{\infty}\frac{(-1)^{\ell}}{\ell!}\,\int_{[0,s]^{\ell}}\prod_{j=1}^{\ell}d\xi_{j}\,\mathrm{Pf}\bigg\{k^{(\mathrm{origin})}_{\infty,m}(\xi_{i},\xi_{j})\bigg\}_{i,j=1}^{\ell}.
\end{equation}
\end{proposition}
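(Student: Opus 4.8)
The plan is to adapt the proof of Theorem~\ref{th:lambdamax} to the origin scaling, replacing the edge kernel convergence of Theorem~\ref{th:edgeuniformity} by Proposition~\ref{prop:kernolim}; the argument is in fact a simpler transcription of that one. First I would observe that
\[
\bigl\{N^{m/2}\lambda^{(m)}_{\mathrm{min},N} > s\bigr\} = \bigl\{\text{the interval } [0,\,sN^{-m/2}] \text{ contains no real eigenvalue of } G^{(m)}\bigr\},
\]
so that by the Fredholm--Pfaffian expansion of a gap probability (Chapter~5 of \cite{DVJ88}, exactly as in the proof of Theorem~\ref{th:lambdamax}) together with Theorem~\ref{th:ik},
\[
\mathbb{P}\bigl(N^{m/2}\lambda^{(m)}_{\mathrm{min},N} > s\bigr) = \sum_{\ell=1}^{N}\frac{(-1)^{\ell}}{\ell!}\int_{[0,sN^{-m/2}]^{\ell}}\mathrm{Pf}\bigl\{K_{N}(x_i,x_j)\bigr\}_{i,j=1}^{\ell}\prod_{j=1}^{\ell}dx_j.
\]
Changing variables $x_j = \xi_j N^{-m/2}$, the Jacobian $N^{-m\ell/2}$ is precisely the determinant of the diagonal conjugation relating $K_{N}$ evaluated at the rescaled arguments to the gauge-equivalent kernel $k^{(\mathrm{origin})}_{N,m}$ of \eqref{koriglim} (the same cancellation recorded in the Remark following Theorem~\ref{th:origkern}), so the right-hand side equals $\sum_{\ell=1}^{N}\frac{(-1)^\ell}{\ell!}\int_{[0,s]^\ell}\mathrm{Pf}\{k^{(\mathrm{origin})}_{N,m}(\xi_i,\xi_j)\}_{i,j=1}^{\ell}\prod_j d\xi_j$.

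Next I would pass to the limit term by term. By Proposition~\ref{prop:kernolim} the entries of $k^{(\mathrm{origin})}_{N,m}$ converge uniformly on the compact square $[0,s]^2$ to those of $k^{(\mathrm{origin})}_{\infty,m}$; in particular both kernels are uniformly bounded there, say by a constant $M$. Using $\mathrm{Pf}(A)^2 = \det(A)$ for antisymmetric $A$, the elementary inequality $|x-y|\le|x^2-y^2|^{1/2}$ for nonnegative $x,y$, and Lemma~\ref{AGZ_lemma}, each $\ell$-fold integral differs from its limiting counterpart by at most $s^{\ell}(2\ell)^{(\ell+1)/2}M^{2\ell-1}\,\norm{k^{(\mathrm{origin})}_{N,m}-k^{(\mathrm{origin})}_{\infty,m}}^{1/2}$, where the norm is the supremum over $[0,s]^2$. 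Dividing by $\ell!$ and summing over $\ell$ gives a convergent series by Stirling's formula, so the total difference tends to $0$ as $N\to\infty$; the same bound shows that the limiting series $\sum_{\ell\ge1}\frac{(-1)^\ell}{\ell!}\int_{[0,s]^\ell}\mathrm{Pf}\{k^{(\mathrm{origin})}_{\infty,m}\}$ converges absolutely and that the truncation tail $\sum_{\ell>N}$ vanishes. A pleasant simplification relative to the edge is that the integration domain $[0,s]$ is bounded, so no analogue of the conjugation by $\nu_{N}$ in \eqref{nudef} is required.

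Combining these steps yields, for every fixed $s>0$,
\[
\mathbb{P}\bigl(N^{m/2}\lambda^{(m)}_{\mathrm{min},N} > s\bigr) \longrightarrow \sum_{\ell=1}^{\infty}\frac{(-1)^\ell}{\ell!}\int_{[0,s]^\ell}\mathrm{Pf}\bigl\{k^{(\mathrm{origin})}_{\infty,m}(\xi_i,\xi_j)\bigr\}_{i,j=1}^{\ell}\prod_j d\xi_j,
\]
and one checks (by dominated convergence in $s$, together with monotonicity in $s$ of the left-hand sides) that the right-hand side is the survival function of a bona fide random variable, giving the asserted convergence in distribution. I do not expect a real obstacle: the only points needing attention are the uniform-boundedness input for Lemma~\ref{AGZ_lemma} (supplied by Proposition~\ref{prop:kernolim}) and the control of the tail $\sum_{\ell>N}$, both handled verbatim as in the proof of Theorem~\ref{th:lambdamax}; everything else is a routine transcription of that argument with the edge scaling replaced by the origin scaling.
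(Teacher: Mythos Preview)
Your proposal is correct and follows essentially the same approach as the paper: express the survival function as a Fredholm--Pfaffian gap probability, rescale to obtain the origin kernel, and pass to the limit using Proposition~\ref{prop:kernolim} together with Lemma~\ref{AGZ_lemma} as in the proof of Theorem~\ref{th:lambdamax}. Your observation that the bounded domain $[0,s]$ obviates the $\nu_N$-conjugation is exactly the simplification the paper exploits (implicitly) in giving a much terser proof here than at the edge.
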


\begin{proof}
The event that $N^{\frac{m}{2}}\lambda^{(m)}_{\mathrm{min},N} > s$ is equivalent to the event that the interval $[0,s\,N^{-\frac{m}{2}}]$ contains no eigenvalues. The probability of the latter is expressible through standard manipulations in terms of the correlation functions of the point process (see \textit{e.g.} Chapter $5$ of \cite{DVJ88}),
\begin{align}
\mathbb{P}\left(N^{\frac{m}{2}}\lambda^{(m)}_{\mathrm{min},N} > s\right) &= \mathbb{P}\left(\{\textrm{the interval}\,\, [0,sN^{-\frac{m}{2}}] \,\,\textrm{contains no eigenvalues}\}\right)\\
&= \sum_{\ell=1}^{N}\frac{(-1)^{\ell}}{\ell!}\,\int_{[0,s\,N^{-\frac{m}{2}}]^{\ell}}\prod_{j=1}^{\ell}dx_{j}\,\mathrm{Pf}\bigg\{K_{N}(x_{i},x_{j})\bigg\}_{i,j=1}^{\ell}\\
&= \sum_{\ell=1}^{N}\frac{(-1)^{\ell}}{\ell!}\,\int_{[0,s]^{\ell}}\prod_{j=1}^{\ell}d\xi_{j}\,\mathrm{Pf}\bigg\{k^{(\mathrm{origin})}_{N}(\xi_{i},\xi_{j})\bigg\}_{i,j=1}^{\ell}
\end{align}
where we changed variable $x_{j} = N^{-\frac{m}{2}}\xi_{j}$ for each $j=1,\ldots,\ell$. Passing to the limit with the uniform convergence of Proposition \ref{prop:kernolim} and applying Lemma \ref{AGZ_lemma} as in the proof of Theorem \ref{th:lambdamax} completes the proof.
\end{proof}

\section{Uniform asymptotics of a truncated hypergeometric series}
In this Appendix we obtain the asymptotics of the sum \eqref{ginfdef1} close to the transition region $x=1$ and uniformly above this region. Similar asymptotics without uniform error bounds were given in \cite[Appendix C]{AB12}. The case $m=1$ has been studied thoroughly by several authors, we refer to the article \cite{BG07} that inspired the approach taken below for general $m$.
\begin{lemma}
\label{lem:fnedgelem}
Let $s>0$ be fixed. Then uniformly on $x>1-s/\sqrt{N}$ we have
\begin{equation}
f_{N-2}(N^{m}x) \sim \frac{e^{Nm}}{(2\pi)^{\frac{m}{2}}}\,\sqrt{\frac{\pi}{2m}}\,N^{-\frac{m-1}{2}}\,x^{N-1}e^{\frac{N(x-1)^{2}}{2m}}\,\mathrm{erfc}\left(\frac{\sqrt{N}(x-1)}{\sqrt{2m}}\right), \label{fnedge}
\end{equation}
as $N \to \infty$.
\end{lemma}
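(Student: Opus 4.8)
The plan is to reduce \eqref{fnedge} to a partial Gaussian sum via Stirling's formula. First write $f_{N-2}(N^m x)=\sum_{j=0}^{N-2}a_j$ with $a_j=(N^m x)^j/(j!)^m$ and set $j=N+l$. Stirling applied to $(N+l)!$ gives, uniformly for $|l|\le N^{2/3}$,
\[
a_{N+l}=\frac{e^{mN}x^{N}}{(2\pi N)^{m/2}}\,x^{l}\,e^{-ml^2/(2N)}\bigl(1+o(1)\bigr),
\]
while outside this range (and for $|x-1|\le N^{-1/2}\log N$) the terms $a_{N+l}$ are super-exponentially small and may be discarded. Completing the square turns the right-hand side into $A_N\,e^{-m(l-l^\ast)^2/(2N)}$ with $l^\ast=N\log x/m$ and $A_N=e^{mN}x^{N}(2\pi N)^{-m/2}e^{N(\log x)^2/(2m)}$, so that $f_{N-2}(N^m x)\approx A_N\sum_{l=-N}^{-2}e^{-m(l-l^\ast)^2/(2N)}$ is a partial sum of a Gaussian of width $\sqrt{N/m}\gg1$. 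Replacing the sum by the corresponding integral (Euler--Maclaurin or Poisson summation, the remainder being smaller than the leading term by a factor $\gtrsim\sqrt N$) yields
\[
f_{N-2}(N^m x)\approx A_N\sqrt{\tfrac{\pi N}{2m}}\;\mathrm{erfc}\!\Bigl(\sqrt{\tfrac m{2N}}\,(2+l^\ast)\Bigr).
\]
Using $\tfrac1{(2\pi N)^{m/2}}\sqrt{\tfrac{\pi N}{2m}}=\tfrac1{(2\pi)^{m/2}}\sqrt{\tfrac\pi{2m}}\,N^{-(m-1)/2}$, together with $x^{N}e^{N(\log x)^2/(2m)}=x^{N-1}e^{N(x-1)^2/(2m)}\bigl(1+o(1)\bigr)$ and $\sqrt{m/(2N)}\,(2+l^\ast)=\sqrt N(x-1)/\sqrt{2m}+o(1)$ (all uniform on $|x-1|\le N^{-1/2}\log N$, by $\log x=(x-1)+O((x-1)^2)$ and $N(x-1)^3=o(1)$ there), this is exactly \eqref{fnedge}.

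For $x-1>N^{-1/2}\log N$ (the only case not yet covered, since in the range $x>1-sN^{-1/2}$ every $x<1$ already satisfies $|x-1|\le N^{-1/2}\log N$) the coalescing-Gaussian picture is not needed: the $a_j$ increase up to $j=Nx^{1/m}$, so $f_{N-2}(N^m x)$ is dominated by its last term $a_{N-2}$, and summing the resulting nearly geometric series — consecutive terms in ratio $\approx x^{-1}$ — gives $f_{N-2}(N^m x)\sim a_{N-2}\,\tfrac{x}{x-1}\sim x^{N-1}e^{mN}/\bigl((2\pi N)^{m/2}(x-1)\bigr)$; one checks via $e^{z^2}\mathrm{erfc}(z)\sim(z\sqrt\pi)^{-1}$ that this coincides with \eqref{fnedge}, and the two regimes patch on the overlap $N^{-1/2}\ll x-1\ll N^{-1/3}$. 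Alternatively, the whole argument may be run through the $m=1$ contour-integral method of \cite{BG07}: represent $1/\Gamma$ by a Hankel loop, take the $m$-th power, resum the geometric series in the shifted index and rescale, reducing $f_{N-2}(N^m x)$ to an $m$-fold integral whose per-variable phase $s-\tfrac{N-1}N\log s$ has a saddle at $s\approx1$ and which carries a simple pole on $\{s_1\cdots s_m=x\}$; these collide at $x=1$, and a van der Waerden / Bleistein deformation past the pole produces the Faddeeva combination $e^{z^2}\mathrm{erfc}(z)$ uniformly, the $m-1$ transverse directions supplying the powers of $N$ and the numerical constants.

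The main obstacle is making all of this genuinely uniform in $x$, with real error bounds rather than a term-by-term heuristic. This requires: (i) a quantitative Stirling expansion of $a_{N+l}$ with error controlled uniformly in both $l$ and $x$, and explicit super-exponential bounds on the discarded tails $|l|>N^{2/3}$ valid throughout $|x-1|\le N^{-1/2}\log N$; (ii) a uniform comparison of the partial Gaussian sum with its integral, keeping the Euler--Maclaurin (or Poisson) remainder negligible relative to the — possibly small — leading term $A_N\sqrt N\,\mathrm{erfc}(z)$; and (iii) the matching of the two regimes, verifying the asymptotic formulas agree to leading order on the overlap. In the contour-integral variant the analogous difficulty is the choice of admissible steepest-descent contours depending continuously on $x$ and staying valid as the pole migrates through the saddle, together with the bookkeeping of the $m-1$ non-degenerate transverse integrations; for $m=1$ this is precisely the content of \cite{BG07}.
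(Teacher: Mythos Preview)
Your primary approach (Stirling on $a_{N+l}$, complete the square, partial Gaussian sum $\to$ integral, then match with a geometric-series regime for large $x-1$) is sound and genuinely different from what the paper does. The paper takes precisely the route you sketch as an ``alternative'': the $m$-fold Cauchy representation $N^{j}/j!=(2\pi i)^{-1}\oint e^{Nz}z^{-j-1}dz$, summation of the geometric series, and steepest descent on $\phi(z)=z-\log z$. One difference from your description: the paper does not use a Bleistein/van der Waerden deformation to handle the coalescing pole. Instead it writes
\[
\frac{1}{x-z^{(m)}}=\sqrt{N}\int_{0}^{\infty}e^{-t\sqrt{N}(x-z^{(m)})}\,dt,
\]
interchanges, and then each $\theta_j$-integral is an exact Gaussian Fourier transform; the remaining $t$-integral is $\int_0^\infty e^{-t\sqrt{N}(x-1)-mt^2/2}\,dt$, which is the erfc. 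This sidesteps the pole--saddle collision entirely and yields a single formula uniform on $x>1-s/\sqrt{N}$ without any regime split or matching.

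What each approach buys: yours is elementary (no contour integrals) and makes the Gaussian structure of the sum transparent, but as you correctly flag, the uniformity bookkeeping is real work---the regime split, the Euler--Maclaurin remainder against a possibly small $\mathrm{erfc}$, and the overlap matching all need genuine estimates. One small correction: the window $|l|\le N^{2/3}$ is slightly too wide for the claimed uniform $1+o(1)$, since the cubic term $ml^{3}/(6N^{2})$ in the Stirling exponent is $O(1)$ at $|l|=N^{2/3}$; a window of order $\sqrt{N}(\log N)^{2}$ is enough and keeps the cubic correction $o(1)$. The paper's contour route avoids all of this at the cost of a little complex analysis, and in particular never needs to treat $x-1>N^{-1/2}\log N$ separately.
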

\begin{proof}
Let us assume that $x\geq 1$ to begin with. We start from the identity
\begin{equation}
\frac{N^{j}}{j!} = \frac{1}{2\pi i}\oint_{C}dz\,\frac{e^{Nz}}{z^{j+1}}
\end{equation}
where $C$ is a circle centered at $0$ with radius $r_{N} = 1-N^{-\frac{1}{2}-\delta}$ for a fixed $\delta>0$. Inserting this into the definition of $f_{N-2}(N^{m}x)$ and summing the resulting geometric series gives
\begin{equation}
f_{N-2}(N^{m}x) = \frac{1}{(2\pi i)^{m}}\oint_{C^{m}}\left(\prod_{j=1}^{m}\frac{dz_{j}}{z_{j}}\,e^{Nz_{j}}\right)\frac{\left(\frac{x}{z^{(m)}}\right)^{N-1}-1}{\frac{x}{z^{(m)}}-1}, \label{applycauchy}
\end{equation}
where $z^{(m)} = z_{1}z_{2}\ldots z_{m}$. Since $x \geq 1 > r_{N}$, one of the terms in \eqref{applycauchy} is analytic inside the closed integration contour. By Cauchy's theorem this contribution is equal to zero. We thus obtain the identity
\begin{equation}
f_{N-2}(N^{m}x) = \frac{x^{N-1}}{(2\pi i)^{m}}\,\oint_{C^{m}}\left(\prod_{j=1}^{m}dz_{j}\,e^{N\phi(z_j)}z_{j}\right)\,\frac{1}{x-z^{(m)}}, \label{erfcderivident}
\end{equation}
where $\phi(z) = z-\log(z)$. Parameterising the contour $C$ by $z = r_{N}e^{i\theta}$ with $\theta \in [-\pi,\pi)$, the function $\phi(z)$ takes the form
\begin{equation}
\phi(r_{N}e^{i\theta}) = r_{N}e^{i\theta}-i\theta-\log(r_{N}) \label{actionfnedge}
\end{equation}
with $\mathrm{Re}(\phi(z)) = r_{N}\cos(\theta)-\log(r_{N})$. Thus $\mathrm{Re}(\phi(z))$ is maximised when $\theta=0$. This combined with the simple bound $|x-z^{(m)}| \geq CN^{-\frac{1}{2}-\delta}$ shows that the contribution to \eqref{erfcderivident} from the complement of the intervals $[-\epsilon,\epsilon]^{m}$ is exponentially suppressed for any $\epsilon>0$ and can be neglected.

Taking $\epsilon>0$ sufficiently small, we Taylor expand the action \eqref{actionfnedge} near $\theta=0$,
\begin{equation}
\phi(z) = 1+i\theta(r_{N}-1)-\frac{\theta^{2}}{2}+O(\theta^{3})+O(N^{-1-\delta}) + O(\theta^{2}N^{-\frac{1}{2}-\delta})
\end{equation}
where we used that $r_{N}-1-\log(r_{N}) = O(N^{-1-\delta})$ and $r_{N}(e^{i\theta}-1-i\theta) = O(\theta^{3})$ as $\theta \to 0$, uniformly in $N$. Next consider the denominator term in \eqref{erfcderivident}. We have
\begin{equation}
\mathrm{Re}(x-z^{(m)}) = x-r_{N}^{m}\cos(\theta_1+\ldots+\theta_m) \geq CN^{-\frac{1}{2}-\delta} > 0\label{repartbnd}
\end{equation}
and we can write
\begin{equation}
\frac{1}{\sqrt{N}(x-z^{(m)})}= \int_{0}^{\infty}dt\,e^{-t\sqrt{N}(x-z^{(m)})}. \label{expintrep}
\end{equation}
Choose a new parameter $\tilde{\delta}$ such that $\tilde{\delta} > \delta > 0$. Then by \eqref{repartbnd} we have that uniformly on $x \geq 1$,
\begin{equation}
\int_{N^{\tilde{\delta}}}^{\infty}dt\,e^{-t\sqrt{N}(x-z^{(m)})} = O(e^{-cN^{\tilde{\delta}-\delta}}), \qquad N \to \infty,
\end{equation}
and therefore we can neglect the part of the $dt$ integral with $t>N^{\tilde{\delta}}$. Now we apply Fubini's theorem to interchange the $d\theta$ and $dt$ integrals, split $-t\sqrt{N}(x-z^{(m)}) = -t\sqrt{N}(x-r_{N}^{m}) +  t\sqrt{N}(z^{(m)}-r_{N}^{m})$ and Taylor expand 
\begin{align}
z^{(m)}-r_{N}^{m} &= r_{N}^{m}(e^{i(\theta_1+\ldots+\theta_m)}-1)\\
&= r_{N}^{m}i(\theta_1+\ldots+\theta_{m}) + O(\vec{\theta}^{2}).
\end{align}
Inserting these results we obtain
\begin{align}
&f_{N-2}(N^{m}x) = \frac{x^{N-1}e^{Nm}\sqrt{N}}{(2\pi)^{m}}\int_{0}^{N^{\tilde{\delta}}}dt\,e^{-t\sqrt{N}(x-r_{N}^{m})}\\
&\times \int_{[-\epsilon,\epsilon]^{m}}\,\left(\prod_{j=1}^{m}d\theta_{j}\,e^{-\frac{N}{2}\theta_{j}^{2}+iN(r_{N}-1)\theta_{j}+it\sqrt{N}r_{N}^{m}\theta_{j}}\right)\,e^{NO(\vec{\theta}^{3})+N^{\frac{1}{2}-\delta}O(\vec{\theta}^{2})+O(t\sqrt{N}\vec{\theta}^{2})}\,(1+O(N^{-\delta})), \label{bigoexp}
\end{align}
where all of the big-$O$ error terms are uniform. These big-$O$ terms can be neglected as follows: Since $t < N^{\tilde{\delta}}$ we have $t\sqrt{N}|\vec{\theta}|^{2} < cN|\vec{\theta}|^{2}$ for a $c>0$ we can take as small as desired. Similarly $N\tilde{\phi}(\theta) < CN|\vec{\theta}|^{3} < cN|\vec{\theta}|^{2}$. Then using the bound $|e^{z}-1| \leq |z|e^{|z|}$ for all $z \in \mathbb{C}$ we get
\begin{align}
&\bigg{|}\sqrt{N}\int_{0}^{N^{\tilde{\delta}}}dt\,e^{-t\sqrt{N}(x-r_{N}^{m})}\\
&\times\int_{[-\epsilon,\epsilon]^{m}}\,\prod_{j=1}^{m}d\theta_{j}\,e^{-\frac{N}{2}\theta_{j}^{2}+iN(r_{N}-1)\theta_{j}+it\sqrt{N}r_{N}^{m}\theta_{j}}(e^{NO(\vec{\theta}^{3})+N^{\frac{1}{2}-\delta}O(\vec{\theta}^{2})+O(t\sqrt{N}\vec{\theta}^{2})}-1)\bigg{|}\\
&\leq \sqrt{N}\int_{0}^{N^{\tilde{\delta}}}dt\,\int_{\mathbb{R}^{m}}\,\prod_{j=1}^{m}d\theta_{j}\,e^{-N(\frac{1}{2}-c)\theta_{j}^{2}}|NO(\vec{\theta}^{3})+O(t\sqrt{N}\vec{\theta}^{2})+N^{\frac{1}{2}-\delta}O(\vec{\theta}^{2})|\\
&= O(N^{-\frac{m}{2}+2\tilde{\delta}})
\end{align}
uniformly on $x \geq 1$. Choosing $\tilde{\delta}, \delta$ small enough we can neglect this contribution as the power of $N$ is strictly less than the power $N^{-\frac{m-1}{2}}$ in \eqref{fnedge}.

Next we replace the integration domain $[-\epsilon,\epsilon]^{m}$ in \eqref{bigoexp} with $\mathbb{R}^{m}$ admitting at most exponentially small errors. The resulting integral is the Fourier transform of a Gaussian:
\begin{align}
\int_{\mathbb{R}^{m}}\,\left(\prod_{j=1}^{m}d\theta_{j}\,e^{-\frac{N}{2}\theta_{j}^{2}+iN(r_{N}-1)\theta_{j}+it\sqrt{N}r_{N}^{m}\theta_{j}}\right) = \left(\frac{2\pi}{N}\right)^{\frac{m}{2}}\,e^{-m\frac{(\sqrt{N}(r_{N}-1)+tr_{N}^{m})^{2}}{2}}.
\end{align}
After inserting this identity we can now reinstate the part of the $t$-integral on $[N^{\delta},\infty)$ costing a multiplicative $(1+O(e^{-N^{\delta}}))$ error. Summarising, the preceding estimates show that uniformly on $x \geq 1$, we have
\begin{equation}
f_{N-2}(N^{m}x) =  \frac{x^{N-1}e^{Nm}}{(2\pi)^{\frac{m}{2}}N^{\frac{m-1}{2}}}\int_{0}^{\infty}dt\,e^{-t\sqrt{N}(x-r_{N}^{m})-m\sqrt{N}(r_{N}-1)tr_{N}^{m}-\frac{m}{2}t^{2}r_{N}^{2m}}(1+O(N^{-\delta})+O(N^{-\frac{1}{2}+2\tilde{\delta}}))\label{finaltint}
\end{equation}
We have used that $\frac{Nm(r_{N}-1)^{2}}{2} = O(N^{-\delta})$ and likewise, all other instances of $r_{N}$ in \eqref{finaltint} can be replaced with $1$ at the expense of a multiplicative $(1+O(N^{-\delta}))$ error. This gives
\begin{equation}
f_{N-2}(N^{m}x) \sim  \frac{x^{N-1}e^{Nm}}{(2\pi)^{\frac{m}{2}}N^{\frac{m-1}{2}}}\int_{0}^{\infty}dt\,e^{-t\sqrt{N}(x-1)-m\frac{t^{2}}{2}} 
\end{equation}
which is equivalent to the claimed formula \eqref{fnedge}.

If $1-s/\sqrt{N} < x\leq1$ the approach is similar, the only difference with the case $x \geq 1$ is that we take the radius $r_{N}$ of the circle $C$ to be $r_{N} = 1+N^{-\frac{1}{2}-\delta}$, so that now the singularity at $z=x$ is strictly inside $C$. Now we get a contribution from the first term in \eqref{applycauchy} which is the quantity $f_{\infty}(N^{m}x)$. Following all the same steps and changing the sign in the exponent of \eqref{expintrep}, we get 
\begin{align}
f_{N-2}(N^{m}x) &= f_{\infty}(N^{m}x) \label{finterf}\\
&-\frac{e^{Nm}}{(2\pi)^{\frac{m}{2}}}\,\sqrt{\frac{\pi}{2m}}\,N^{-\frac{m-1}{2}}\,x^{N-1}e^{\frac{N(x-1)^{2}}{2m}}\,\mathrm{erfc}\left(\frac{-\sqrt{N}(x-1)}{\sqrt{2m}}\right)\,(1+o(1)) \label{finterft2}\\
&= \frac{e^{Nm}}{(2\pi)^{\frac{m}{2}}}\,\sqrt{\frac{\pi}{2m}}\,N^{-\frac{m-1}{2}}\,x^{N-1}e^{\frac{N(x-1)^{2}}{2m}}\,\mathrm{erfc}\left(\frac{\sqrt{N}(x-1)}{\sqrt{2m}}\right)\,(1+o(1)) \label{fintfinal}.
\end{align}
To obtain the last estimate \eqref{fintfinal} we inserted the asymptotics of $f_{\infty}(N^{m}x)$ into \eqref{finterf} from \eqref{asyfint}. Using the assumption that $1-s/\sqrt{N} \leq x < 1$ shows that both terms \eqref{finterf} and \eqref{finterft2} are of the same order in $N$. Finally, the identity $1-\frac{1}{2}\mathrm{erfc}(-x) = \frac{1}{2}\mathrm{erfc}(x)$ is used and one obtains \eqref{fintfinal}.
\end{proof}

\section{$L^{1}$ estimate and miscellaneous bounds}
\label{se:truncation}
Let us denote for some small positive constant $c>0$,
\begin{equation}
\tau(f) := \sup_{x \in \mathbb{R}}\bigg\{\lvert f(x) \rvert e^{-cx^{\frac{2}{m}}}\bigg\}.
\end{equation}
\begin{lemma}
\label{le:l1est}
Let $f$ satisfy the assumptions of Theorem \ref{th:ginconv}, in particular that $\tau(f) < \infty$. Then the complementary statistic $\tilde{\xi}^{\mathrm{c}}_{N,m}(f)$ defined in \eqref{xidef} satisfies the $L^{1}$ estimate,
\begin{equation}
\mathbb{E}\left(\bigg{|}\tilde{\xi}^{\mathrm{c}}_{N,m}(f)\bigg{|}\right) = O(N^{\epsilon}), \label{l1est}
\end{equation}
as $N \to \infty$. In the mesoscopic regime we have the same estimate, for any $E \in (-1,1)$ and $\tau>0$,
\begin{equation}
\mathbb{E}\left(\bigg{|}\tilde{\xi}^{(\tau),\mathrm{c}}_{N,m}(f)\bigg{|}\right) = O(N^{\epsilon}). \label{l1estmeso}
\end{equation}
\end{lemma}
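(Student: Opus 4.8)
The plan is to control $\mathbb{E}(|\tilde\xi^{\mathrm c}_{N,m}(f)|)$ by a first–moment bound and then split the excluded region into an origin neighbourhood, an edge window, and a far tail. First I would note that, since $D_N$ and $I_N$ are antisymmetric (Lemma \ref{lem:dandi}), the one–point function of the real eigenvalues is $\rho^{(1)}_N(x)=\mathrm{Pf}\,K_N(x,x)=S_N(x,x)$, so that by the triangle inequality
\[
\mathbb{E}\big(|\tilde\xi^{\mathrm c}_{N,m}(f)|\big)\;\le\;\int_{\mathcal{E}_N^{\mathrm c}}|f(x)|\,S_N(x,x)\,dx .
\]
Recalling \eqref{calen}, I write $\mathcal{E}_N^{\mathrm c}=\mathcal{O}_N\cup\mathcal{B}_N\cup\mathcal{T}_N$ with $\mathcal{O}_N=\{|x|\le N^{-m/2+\epsilon}\}$, $\mathcal{B}_N=\{1-N^{-1/2+\epsilon}\le|x|\le1+N^{-1/2+\epsilon}\}$ and $\mathcal{T}_N=\{|x|>1+N^{-1/2+\epsilon}\}$ (the complement being taken in $\mathbb{R}$).

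On $\mathcal{O}_N$ the argument of $f$ is $o(1)$, so $|f(x)|\le\tau(f)e^{c|x|^{2/m}}\le 2\tau(f)$ for $N$ large, and the contribution is at most $2\tau(f)\,\mathbb{E}(\#\{j:|\lambda_j|\le N^{-m/2+\epsilon}\})$. Rescaling by $\xi=N^{m/2}x$ turns this count into $\int_{|\xi|<N^\epsilon}s^{(\mathrm{origin})}_{N,m}(\xi,\xi)\,d\xi$, and I would bound the integrand uniformly by $C(1+|\xi|^{1/m-1})$ — from Theorem \ref{th:origkern} for $|\xi|$ bounded, and from the crude Laplace estimates \eqref{crudegin}, \eqref{crudefn} inserted into \eqref{prekernel} for $M\le|\xi|\le N^\epsilon$ — so this piece is $O(\tau(f)N^\epsilon)$ since $|\xi|^{1/m-1}$ is integrable at $0$. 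On $\mathcal{B}_N$ one has $|f(x)|\le C\tau(f)$ because $|x|$ is within $N^{-1/2+\epsilon}$ of $1$; here I would prove the uniform density bound $S_N(x,x)\le C\sqrt N$ on all of $\mathcal{B}_N$ from the representation \eqref{Srep1}, using the \emph{sharp} Laplace asymptotics of Propositions \ref{prop:gin1}, \ref{prop:gin3} (and Lemma \ref{lem:fnedgelem} for $f_{N-2}$ near the transition): the factors $N^{-(m-1)/2}$ carried by $w$ and by $f_{N-2}$ combine with $C_{N,m}\sim N^{3m/2}$ to give precisely order $\sqrt N$. As $|\mathcal{B}_N|=O(N^{-1/2+\epsilon})$, this piece is $O(\tau(f)N^\epsilon)$ as well. (Alternatively, $\mathcal{B}_N$ may be split into a bulk part $1-N^{-1/2+\epsilon}\le|x|\le1-M/\sqrt N$, on which Theorem \ref{th:globintro} gives $S_N(x,x)\le C\sqrt N$, and a genuine edge window $||x|-1|\le M/\sqrt N$, on which Theorem \ref{th:edgeuniformity} gives $S_N(x,x)=2\sqrt{Nm}\rho(1)\,s^{(\mathrm{edge})}_N(\xi,\xi)=O(\sqrt N)$.)

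On the tail $\mathcal{T}_N$ I would keep the full growth bound $|f(x)|\le\tau(f)e^{c|x|^{2/m}}$ with $c$ small, say $c<m/4$, and show the density decays exponentially: inserting the weight asymptotics \eqref{wresult} and the sharp estimate of Proposition \ref{prop:gin2} for $f_{N-2}$ beyond the transition region into \eqref{Srep1}, and using that the remaining Gaussian is centred at $v=x$, one gets $S_N(x,x)\le N^{C}e^{-N g(x)}$ for $|x|>1+N^{-1/2+\epsilon}$, where $g(x)=\frac m2(|x|^{2/m}-1)-\log|x|$ is positive off $|x|=1$, convex near $1$, and $\sim\frac m2|x|^{2/m}$ at infinity (for $1+N^{-1/2+\epsilon}<|x|<1+\delta$ this is sharpened to $O(e^{-cN^{2\epsilon}})$ via the monotonicity of the auxiliary function $\psi_N$, exactly as in the proof of Theorem \ref{th:edgeuniformity}). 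Then, for $N$ large enough that $Nm/4>c$,
\[
\int_{\mathcal{T}_N}|f(x)|\,S_N(x,x)\,dx\;\le\;\tau(f)\,N^{C}\!\int_{|x|>1}e^{\,c|x|^{2/m}-N g(x)}\,dx\;\le\;\tau(f)\,N^{C}\!\int_{|x|>1}e^{-\frac{Nm}{4}|x|^{2/m}}\,dx\;=\;O\big(e^{-cN}\big),
\]
which is negligible. Summing the three contributions gives \eqref{l1est}. For \eqref{l1estmeso}, the hypotheses of Theorem \ref{th:ginconvmeso} force $f$ to be bounded, so $|f(N^\tau(E-x))|\le\|f\|_\infty$ uniformly and $\mathbb{E}(|\tilde\xi^{(\tau),\mathrm c}_{N,m}(f)|)\le\|f\|_\infty\,\mathbb{E}(\#\{j:\lambda_j\in\mathcal{E}_N^{\mathrm c}\})$; the three–region argument above with $f\equiv1$ bounds this by $O(N^\epsilon)$.

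The main obstacle is the edge window $\mathcal{B}_N$: there the real–eigenvalue density is at its largest, of order $\sqrt N$, and neither the global approximation (Theorem \ref{th:globintro}) nor the edge convergence (Theorem \ref{th:edgeuniformity}) covers all of it, so the two regimes must be patched together; moreover one must track the exact powers of $N$ in the Laplace asymptotics, since the very crude bounds \eqref{crudegin}, \eqref{crudefn} each lose a factor $N^{(m-1)/2}$ and would give a density estimate of the wrong order. Obtaining the clean uniform bound $S_N(x,x)\le C\sqrt N$ throughout $\mathcal{B}_N$ is the technical heart of the argument.
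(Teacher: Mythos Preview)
Your proposal is correct and follows essentially the same route as the paper: a first-moment bound reducing to $\int_{\mathcal{E}_N^{\mathrm c}}|f(x)|S_N(x,x)\,dx$, then a decomposition into origin, edge window and far tail, with the technical heart being the uniform $S_N(x,x)\le C\sqrt{N}$ bound near the edge obtained from the sharp Laplace asymptotics of Propositions~\ref{prop:gin1}, \ref{prop:gin3} and Lemma~\ref{lem:fnedgelem} inserted into representation~\eqref{Srep1}. The paper organises the splits slightly differently (it treats the entire region $|x|\ge 1-N^{-1/2+\epsilon}$ at once, sub-splitting at $|x|=1$ and $|x|=1+\epsilon$, rather than carving out a separate far tail), and for the origin it works directly with the series and the auxiliary function $Q$ of Lemma~\ref{lem:qbound} rather than invoking Theorem~\ref{th:origkern}; but these are cosmetic differences. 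One small slip: for the intermediate origin range $M\le|\xi|\le N^{\epsilon}$ you need the \emph{sharp} Laplace asymptotics (equivalently Theorem~\ref{th:globintro}) to get the $|\xi|^{1/m-1}$ density bound, not the crude estimates \eqref{crudegin}, \eqref{crudefn} you cite---as you yourself note later, those lose the crucial $N^{(m-1)/2}$ factors.
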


\begin{proof}
The proofs of \eqref{l1est} and \eqref{l1estmeso} are almost identical, so we focus on \eqref{l1est}. We start from the identity,
\begin{equation}
\mathbb{E}\left(\sum_{j=1}^{n}|f(\lambda_j)|\mathbbm{1}_{\lambda_{j} \in \mathcal{E}_{N}^{\mathrm{c}}}\right)  = \int_{\mathcal{E}_{N}^{\mathrm{c}}}dx\, |f(x)|S_{N}(x,x), \label{explinstat}
\end{equation}
where $S_{N}(x,x)$ is the scalar kernel \eqref{prekernel} of Theorem \ref{th:ik} evaluated on the diagonal. We shall make use of the alternative representation \eqref{Srep1}. Without loss of generality we suppose that $f$ is supported on the half-line $[0,\infty)$, if it is not we can apply the same steps owing to the symmetry $S_{N}(x,x) = S_{N}(-x,-x)$. The set $\mathcal{E}_{N}^{\mathrm{c}}$ consists of two parts: the outer-edge region where $x \geq \alpha_{N} := 1-N^{-\frac{1}{2}+\epsilon}$, and the origin region where $0 < x  < N^{-\frac{m}{2}+\epsilon}$. We begin with the outer-edge region. 

Inserting \eqref{Srep1}, we find \begin{align}
&\int_{\alpha_{N}}^{\infty}dx\,|f(x)|S_{N}(x,x) \leq 2C_{N,m}\int_{\alpha_N}^{\infty}dx\,|f(x)|\int_{x}^{\infty}dy\,(y-x)\,w(N^{\frac{m}{2}}x)w(N^{\frac{m}{2}}y)f_{N-2}(N^{m}xy)\label{iout1}\\
&+C_{N,m}N^{\frac{m(N-3)}{2}}2^{m\left(\frac{N-1}{2}\right)}\left(\frac{\Gamma\left(\frac{N-1}{2}\right)}{(N-2)!}\right)^{m}\int_{\alpha_N}^{\infty}dx\,|f(x)|x^{N-1}w(N^{\frac{m}{2}}x). \label{iout2}
\end{align}
We denote by $I_{1}$ the double integral term in \eqref{iout1} and $I_{2}$ the expression in \eqref{iout2}. We further denote by $I_{1,\mathrm{ext}}$ the double integral term in \eqref{iout1} with the integration over $x$ restricted to $x \geq 1$. Proposition \ref{prop:gin1} and Lemma \ref{lem:fnedgelem} provide the following bounds, uniform on $x, y \in [1,\infty)$:
\begin{align}
w(N^{\frac{m}{2}}x) &\leq CN^{-\frac{m-1}{2}}e^{-\frac{Nm}{2}x^{\frac{2}{m}}}, \label{wbndl1}\\
f_{N-2}(N^{m}xy) & \leq Ce^{Nm}N^{-\frac{m-1}{2}}(xy)^{N-1}. \label{fbndl1}
\end{align}
Then with $\phi(x) = \frac{m}{2}(x^{\frac{2}{m}}-1-\log(x))$ and recalling that $C_{N,m} = O(N^{\frac{3m}{2}})$ we get
\begin{equation}
|I_{1,\mathrm{ext}}| \leq CN^{\frac{3}{2}}\int_{1}^{\infty}dx\,|f(x)|\int_{x}^{\infty}dy\,(y-x)\,e^{-N\phi(x)-N\phi(y)}. \label{i1ext}
\end{equation}
If $x>1+\epsilon$ or $y>1+\epsilon$, the latter is bounded by 
\begin{align}
&CN^{\frac{3}{2}}e^{-2(N-1)\phi(1+\epsilon)}\int_{1}^{\infty}dx\,|f(x)|e^{-\phi(x)} \\
&\leq CN^{\frac{3}{2}}e^{-2(N-1)\phi(1+\epsilon)}\tau(f)\int_{1}^{\infty}xe^{-(\frac{m}{2}-c)x^{\frac{2}{m}}}\\
& \leq Ce^{-c_{\epsilon}N}. \label{firstl1bnd}
\end{align}
If $1 \leq x < 1+\epsilon$ and $y<1+\epsilon$ in \eqref{i1ext} we apply the the Laplace method and Taylor expand $\phi(x)$ and $\phi(y)$ at the critical point $(x,y)=(1,1)$ picking up two factors of $N^{-\frac{1}{2}}$ and a third $N^{-\frac{1}{2}}$ from the factor of $(y-x)$. We thus obtain that $|I_{1,\mathrm{ext}}| \leq C\,\tau(f)$. For $I_{2}$ we use Stirling's formula to see that the pre-factors in \eqref{iout2} satisfy
\begin{equation}
C_{N,m}\left(\frac{\Gamma\left(\frac{N-1}{2}\right)}{(N-2)!}\,N^{\frac{N}{2}}\,2^{\frac{N-1}{2}}\right)^{m} \sim (2N)^{\frac{m}{2}}\,e^{\frac{Nm}{2}}, \qquad N\to \infty.
\end{equation}
Combined with the bound \eqref{wbndl1} we deduce that \begin{equation}
|I_{2}| \leq C\,\sqrt{N}\int_{\alpha_{N}}^{\infty}dx\,|f(x)|e^{-N\phi(x)}. \label{i2bnd}
\end{equation}
The contribution to the integral \eqref{i2bnd} on $x > 1+\epsilon$ is bounded by $\tau(f)e^{-c_{\epsilon}N}$ by the same approach used to prove \eqref{firstl1bnd}, so we restrict the range of integration to $x \in [1,1+\epsilon]$. Then we can Taylor expand $\phi(x)$ near $x=1$ and the standard techniques of the Laplace method gives us $|I_{2}| \leq C\,\tau(f)$.
 
It remains to control the contribution to $I_{1}$ from the integration range $x \in (1-N^{-\frac{1}{2}+\epsilon},1]$ that we denote $I_{1,\mathrm{int}}$. If $y>1+\epsilon$, the same strategy above gives an exponentially small contribution, so we shall assume that $y < 1+\epsilon$ in what follows. We again apply \eqref{wbndl1}, but now we use $f_{N-2}(N^{m}xy) \leq f_{\infty}(N^{m}xy)$ and note that $f_{\infty}(N^{m}xy) \leq N^{-\frac{m-1}{2}}e^{Nm(xy)^{\frac{1}{m}}}$ from Proposition \ref{prop:gin3}. Combining these bounds, we have 
that 
\begin{equation}
|I_{1,\mathrm{int}}| \leq \tau(f)\,N^{\frac{3}{2}}\,\int_{1-N^{-\frac{1}{2}+\epsilon}}^{1}dx\,\int_{x}^{1+\epsilon}dy\,(y-x)\,\mathrm{exp}\left(-\frac{Nm}{2}\left(x^{\frac{1}{m}}-y^{\frac{1}{m}}\right)^{2}\right). \label{Iintbnd}
\end{equation}
Taylor expanding near $y=x$ the latter is bounded by a constant times
\begin{align}
&\tau(f)\,N^{\frac{3}{2}}\,\int_{1-N^{-\frac{1}{2}+\epsilon}}^{1}dx\,\int_{x}^{1+\epsilon}dy\,(y-x)\,e^{-\frac{Nm}{2}(y-x)^{2}}\\
&\sim \tau(f)\,N^{\epsilon} \label{finall1bnd}
\end{align}
where the last asymptotic follows from changing variable $y \to x+y/\sqrt{N}$ and applying the dominated convergence theorem.

Next we deal with the origin region, namely the integral \eqref{explinstat} with the integration range restricted to $0 < x <  N^{-\frac{m}{2}+\epsilon}$. Note that in this regime the second term in \eqref{Srep1} is exponentially small due to the factor $x^{N-1}$ and can be neglected. For the first term in \eqref{Srep1} we consider a large constant $M>0$ and consider the contribution from the interval $x \in [0,M\,N^{-\frac{m}{2}}]$. On this interval, we simply change variable $x \to x/N^{\frac{m}{2}}$ and $y \to y/N^{\frac{m}{2}}$ which exactly cancels the $O(N^{\frac{3m}{2}})$ growth of the pre-factor $C_{N,m}$. We therefore just need to show the following quantity is finite:
\begin{align}
I_{\mathrm{origin}} &:= \tau(f)\int_{0}^{M}dx\,\int_{0}^{\infty}dy\,(x+y)\,w(x)w(y)f_{\infty}(xy)\\
&\leq \tau(f)\int_{0}^{M}w(x)dx\,\sum_{k=0}^{\infty}\frac{x^{k}}{(k!)^{m}}\int_{0}^{\infty}dy\,y^{k}w(y)\\
&= \frac{1}{2}\tau(f)\int_{0}^{M}w(x)dx\,\sum_{k=0}^{\infty}\frac{x^{k}}{(k!)^{m}}\,\Gamma\left(\frac{k+1}{2}\right)\,2^{-m\frac{k+1}{2}}, \label{origincntrl}
\end{align}
where we used \eqref{eqmomweight}. The infinite series in \eqref{origincntrl} has infinite radius of convergence. This implies 
\begin{equation}
I_{\mathrm{origin}} \leq C\,\tau(f)\int_{0}^{M}w(x)dx < C'\,\tau(f),
\end{equation}
as desired. The remaining interval $x \in [MN^{-\frac{m}{2}},N^{-\frac{m}{2}+\epsilon}]$ will turn out to give the main contribution. We use the bound $f_{N-2}(N^{m}xy) \leq f_{\infty}(N^{m}xy)$ and Propositions \ref{prop:gin1} and \ref{prop:gin3}. Further changing variables $x=u^{m}$, $y=v^{m}$ we obtain
\begin{equation}
\int_{MN^{-\frac{m}{2}}}^{N^{-\frac{m}{2}+\epsilon}}dx\,|f(x)|S_{N}(x,x) \leq N^{\frac{3}{2}}\tau(f)\int_{M^{\frac{1}{m}}N^{-\frac{1}{2}}}^{N^{-\frac{1}{2}+\frac{\epsilon}{m}}}du\,\int_{u}^{u+\epsilon}dv\,\frac{v^{m}-u^{m}}{(uv)^{\frac{3m}{2}}}\,e^{-\frac{Nm}{2}(u-v)^{2}} \label{obnd}
\end{equation}
Now changing variable $v \to u+v/\sqrt{N}$ we express the integrand in terms of the function $Q(u,v/\sqrt{N})$ of Lemma \ref{lem:qbound} so that the right-hand side of \eqref{obnd} is bounded by
\begin{align}
&\tau(f)N^{\frac{1}{2}}\int_{M'N^{-\frac{1}{2}}}^{N^{-\frac{1}{2}+\frac{\epsilon}{m}}}du\,\int_{0}^{\sqrt{N}\epsilon}dv\,\sqrt{N}Q\left(u,\frac{v}{\sqrt{N}}\right)\,e^{-\frac{m}{2}v^{2}}\\
&\leq C\,\tau(f)N^{\frac{\epsilon}{m}}	
\end{align}
where the boundedness of the function $\sqrt{N}Q\left(u,\frac{v}{\sqrt{N}}\right)$ provided $u > N^{-\frac{1}{2}}$ follows from Lemma \ref{lem:qbound}. 
\end{proof}
We have the following corollaries. Let $J_{\mathrm{edge}} = (1-N^{-\frac{1}{2}+\epsilon},\infty)$ and $J_{\mathrm{origin}} = [0,N^{-\frac{m}{2}+\epsilon}]$.
\begin{corollary}
For a product $X = N^{-\frac{m}{2}}G_{1}G_{2}\ldots G_{m}$ of $m$ independent real Ginibre matrices, let $N_{J}$ denote the number of real eigenvalues of $X$ lying inside the set $J$. Then
\begin{align}
\mathbb{E}(N_{J_{\mathrm{edge}}}) &= O(N^{\epsilon}),\\
\mathbb{E}(N_{J_{\mathrm{origin}}}) &= O(N^{\frac{\epsilon}{m}}).
\end{align}
\end{corollary}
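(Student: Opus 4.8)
The plan is to observe that both bounds are immediate corollaries of the estimates already carried out inside the proof of Lemma \ref{le:l1est}, specialised to the constant test function $f \equiv 1$. First I would record the elementary identity for the first intensity of a Pfaffian point process whose matrix kernel is in the derived form \eqref{derivform}: by Lemma \ref{lem:dandi} both $D_{N}$ and $I_{N}$ are antisymmetric, so $D_{N}(x,x) = I_{N}(x,x) = 0$ and the $2\times 2$ block $K_{N}(x,x)$ is antisymmetric with Pfaffian equal to its upper-right entry $S_{N}(x,x)$; hence $\rho^{(1)}_{N}(x) = S_{N}(x,x)$ and, for any Borel set $J$,
\[
\mathbb{E}(N_{J}) = \int_{J}dx\,S_{N}(x,x).
\]
This is exactly the quantity \eqref{explinstat} appearing at the start of the proof of Lemma \ref{le:l1est} (with $|f|$ replaced by the indicator of $J$), so the reduction is complete once one matches up the regions.

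Next I would note that $J_{\mathrm{edge}} = (1-N^{-\frac{1}{2}+\epsilon},\infty)$ coincides with the positive outer-edge component of $\mathcal{E}_{N}^{\mathrm{c}}$ (recall $\alpha_{N} = 1-N^{-\frac{1}{2}+\epsilon}$), while $J_{\mathrm{origin}} = [0,N^{-\frac{m}{2}+\epsilon}]$ coincides with the positive origin component of $\mathcal{E}_{N}^{\mathrm{c}}$. Taking $f\equiv 1$ is admissible since $\tau(1) = \sup_{x}e^{-cx^{\frac{2}{m}}} = 1 < \infty$, so the bounds in the proof of Lemma \ref{le:l1est} apply verbatim. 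For the edge region they give $|I_{1,\mathrm{ext}}| \le C$, $|I_{2}| \le C$ and $|I_{1,\mathrm{int}}| \le C N^{\epsilon}$ (with $\tau(f)=1$), hence $\int_{\alpha_{N}}^{\infty}S_{N}(x,x)\,dx = O(N^{\epsilon})$, which is the first claim. For the origin region the same argument yields an $O(1)$ contribution from the inner interval $[0,MN^{-\frac{m}{2}}]$ (controlled by the absolutely convergent series \eqref{origincntrl}) and an $O(N^{\frac{\epsilon}{m}})$ contribution from $[MN^{-\frac{m}{2}},N^{-\frac{m}{2}+\epsilon}]$ (via the change of variables reducing the integrand to $Q(u,v/\sqrt{N})$ and the boundedness statement of Lemma \ref{lem:qbound}), hence $\int_{0}^{N^{-\frac{m}{2}+\epsilon}}S_{N}(x,x)\,dx = O(N^{\frac{\epsilon}{m}})$, which is the second claim.

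Since the proof amounts to re-reading the proof of Lemma \ref{le:l1est}, there is no genuine obstacle to overcome; the only point requiring a moment's care is that $J_{\mathrm{edge}}$ and $J_{\mathrm{origin}}$ are one-sided, so the reduction ``without loss of generality $f$ is supported on $[0,\infty)$'' made at the start of that proof is precisely what is needed and no extra symmetrisation step (using $S_{N}(-x,-x)=S_{N}(x,x)$) is required. Indeed these two estimates are already used implicitly inside the proof of Lemma \ref{le:l1est} when $\epsilon>0$ is chosen small enough that $\tilde\xi^{\mathrm{c}}_{N,m}(f)$ does not affect the distributional limits, so isolating them as a corollary is essentially a matter of bookkeeping.
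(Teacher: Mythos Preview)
Your proposal is correct and matches the paper's treatment: the corollary is stated immediately after Lemma~\ref{le:l1est} with no separate proof, as the bounds are read off directly from the edge and origin estimates inside that lemma's proof with $f\equiv 1$. Your identification of the relevant sub-bounds ($I_{1,\mathrm{ext}}$, $I_{1,\mathrm{int}}$, $I_2$ for the edge, and the two origin intervals) is exactly the intended bookkeeping.
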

The following Lemma is given in \cite{LMS21}. For completeness we repeat the proof here.
\begin{lemma}
\label{lem:qbound}
Consider the function $Q : [0,\infty)^{2} \to [0,\infty)$ defined by
\begin{equation}
Q(u,v) = \frac{(u+v)^{m}-u^{m}}{(u(u+v))^{\frac{m-1}{2}}}. \label{qdef}
\end{equation}
Then for $\delta>0$ sufficiently small and $M>0$ arbitrary, we have for $(u,v) \in [0,M] \times [0,\delta]$ the uniform bound
\begin{equation}
Q(u,v) = mv + \sum_{j=0}^{m-1}O\left(\frac{v^{3+j}}{u^{2+j}}\right). \label{qsmallu}
\end{equation}
Furthermore, for any $\epsilon>0$, there is a constant $C_{\epsilon,M}>0$ independent of $u$ such that on the domain $(u,v) \in [\epsilon,M] \times [0,M]$ we have $Q(u,v) \leq C_{\epsilon,M}v$.
\end{lemma}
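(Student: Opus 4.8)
The plan is to reduce both assertions of Lemma \ref{lem:qbound} to a single elementary identity for the function $Q$ from \eqref{qdef} and then invoke Taylor's theorem and a compactness argument. Expanding $(u+v)^{m}$ by the binomial theorem and cancelling a factor $u^{m-1}$ between numerator and denominator, one finds after a short manipulation that, with $t:=v/u$,
\[
Q(u,v)=\frac{(u+v)^{m}-u^{m}}{(u(u+v))^{\frac{m-1}{2}}}=u\Big[(1+t)^{\frac{m+1}{2}}-(1+t)^{-\frac{m-1}{2}}\Big]=mv+u\,\psi(t),
\]
where $\psi(t):=(1+t)^{\frac{m+1}{2}}-(1+t)^{-\frac{m-1}{2}}-mt$. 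Thus everything is governed by the one-variable function $\psi$ on $t\in[0,\infty)$; note that for even $m$ the exponents $\tfrac{m\pm1}{2}$ are half-integers, so $\psi$ must be handled as a smooth function on $[0,\infty)$ rather than as a polynomial.

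For the expansion \eqref{qsmallu}, I would first record that $\psi(0)=\psi'(0)=\psi''(0)=0$: a direct computation shows this, using that the exponents $a=\tfrac{m+1}{2}$ and $b=\tfrac{m-1}{2}$ satisfy $a-b=1$ and $a+b=m$, which forces the constant, linear and quadratic Taylor coefficients of $\psi$ at $0$ to vanish. Taylor's theorem with remainder then gives $|\psi(t)|\le Ct^{3}$ for $t\in[0,1]$, while for $t\ge1$ the crude bounds $(1+t)^{\frac{m+1}{2}}\le Ct^{m+2}$, $(1+t)^{-\frac{m-1}{2}}\le1\le t^{m+2}$ and $mt\le m\,t^{m+2}$ give $|\psi(t)|\le Ct^{m+2}$. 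Combining, $|\psi(t)|\le C\bigl(t^{3}+t^{m+2}\bigr)\le C\sum_{j=0}^{m-1}t^{3+j}$ for all $t\ge0$; multiplying by $u$ and substituting $t=v/u$ yields $|Q(u,v)-mv|\le C\sum_{j=0}^{m-1}v^{3+j}/u^{2+j}$ uniformly on $[0,M]\times[0,\delta]$, which is \eqref{qsmallu} (the bounds $u\le M$, $v\le\delta$ only serve to fix the range and are not otherwise used). The one point I expect to require care --- indeed the only genuine subtlety --- is that on this domain the ratio $t=v/u$ is \emph{not} bounded, blowing up as $u\to0$: a plain Taylor expansion at $t=0$ is therefore insufficient, and the $m$-term error structure of \eqref{qsmallu} is exactly what absorbs the regime $t\ge1$, in which $Q$ itself is large.

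For the second assertion, on the compact rectangle $[\epsilon,M]\times[0,M]$ the ratio $t=v/u$ stays in the bounded interval $[0,M/\epsilon]$, and the identity $Q(u,v)=mv+u\,\psi(v/u)$ shows that $(u,v)\mapsto Q(u,v)/v$ extends continuously across $\{v=0\}$ with limiting value $m$, since $u\,\psi(v/u)/v=\psi(t)/t\to0$ as $t\to0$ (using $\psi(0)=\psi'(0)=0$). A continuous function on a compact set is bounded, so $Q(u,v)/v\le C_{\epsilon,M}$ there, i.e. $Q(u,v)\le C_{\epsilon,M}v$. Equivalently one may observe directly that $Q(u,v)/v=(1+t)^{-\frac{m-1}{2}}\sum_{k=1}^{m}\binom{m}{k}t^{\,k-1}$ is continuous in $t\in[0,M/\epsilon]$ and hence bounded. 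No ingredient beyond the binomial theorem, Taylor's theorem, and continuity on a compact set is needed.
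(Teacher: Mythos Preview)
Your proof is correct and follows the same overall strategy as the paper: both write $Q(u,v)=u\cdot g(t)$ with $t=v/u$ and identify the key cancellation that kills the would-be $O(v^{2}/u)$ term. The executions differ slightly. The paper keeps the binomial form $Q(u,v)=(1+t)^{-(m-1)/2}\sum_{j=0}^{m-1}\binom{m}{j+1}v^{j+1}/u^{j}$, Taylor expands only the prefactor $(1+t)^{-(m-1)/2}$ to second order (with a uniform remainder bound coming from a second-derivative estimate), and observes that the $v^{2}/u$ contributions cancel upon multiplying out. You instead rewrite $Q(u,v)=u\bigl[(1+t)^{(m+1)/2}-(1+t)^{-(m-1)/2}\bigr]$ and set $\psi(t)$ equal to this bracket minus $mt$, noting the triple zero $\psi(0)=\psi'(0)=\psi''(0)=0$. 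Your explicit split into the regimes $t\le 1$ (Taylor) and $t\ge 1$ (crude polynomial bound) makes it especially transparent why the multi-term error $\sum_{j=0}^{m-1}O(v^{3+j}/u^{2+j})$ is the right object: it is precisely what is needed to cover both small and large $t=v/u$, the latter arising because $u$ may approach $0$ on the stated domain. For the second assertion the paper reads the bound directly off the binomial identity, while your continuity/compactness argument is an equally valid (and essentially equivalent) route.
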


\begin{proof}
We have
\begin{equation}
Q(u,v) = u\frac{\left(1+\frac{v}{u}\right)^{m}-1}{\left(1+\frac{v}{u}\right)^{\frac{m-1}{2}}} = \left(1+\frac{v}{u}\right)^{-\frac{m-1}{2}}\sum_{j=0}^{m-1}\binom{m}{j+1}\frac{v^{j+1}}{u^{j}}. \label{qbin}
\end{equation}
This implies that $Q(u,v)\leq C_{\epsilon,M}v$ provided $u > \epsilon$. Taylor expanding near $v=0$, we have
\begin{equation}
\left(1+\frac{v}{u}\right)^{-\frac{m-1}{2}} = 1 - \frac{m-1}{2}\,\frac{v}{u} + O\left(\frac{v^{2}}{u^{2}}\right), \qquad 0 < v < \delta. \label{unifbigo}
\end{equation}
To obtain the uniform big-$O$ term in \eqref{unifbigo} note that 
\begin{equation}
\frac{d^{2}}{dv^{2}}\left(1+\frac{v}{u}\right)^{-\frac{m-1}{2}} = \frac{\left(1+\frac{v}{u}\right)^{-\frac{m-1}{2}}}{4(u+v)^{2}}(m^{2}-1) \leq \frac{m^{2}-1}{4u^{2}}.
\end{equation}
Then inserting \eqref{unifbigo} into \eqref{qbin} the term proportional to $\frac{v^{2}}{u}$ cancels and we obtain \eqref{qsmallu}.
\end{proof}

\bibliographystyle{alpha}
\bibliography{productsbib}

\end{document}